\DeclareMathOperator{\C}{\mathcal{C}}
\newtheorem{theorem}{Theorem}[section]
\newtheorem{lemma}[theorem]{Lemma}
\newtheorem{corollary}[theorem]{Corollary}
\newtheorem{definition}[theorem]{Definition}
\newtheorem{proposition}[theorem]{Proposition}
\newtheorem{remark}[theorem]{Remark}
\newcommand{\cC}{{\mathcal C}}
\newcommand{\cL}{{\mathcal L}}
\newcommand{\F}{{\mathbb F}}
\newcommand{\fq}{{\mathbb F}_q}
\newcommand{\fqn}{{\mathbb F}_{q^n}}
\newcommand{\la}{\langle}
\newcommand{\ra}{\rangle}
\newcommand{\PG}{\mathrm{PG}}
\newcommand{\N}{\mathrm{N}}
\title{On certain linearized polynomials with high degree and kernel of small dimension}
\author{Olga Polverino, Giovanni Zini and Ferdinando Zullo\thanks{
This research was supported by the project ``VALERE: VAnviteLli pEr la RicErca" of the University of Campania ``Luigi Vanvitelli'', and by the Italian National Group for Algebraic and Geometric Structures and their Applications (GNSAGA
- INdAM). }}
\begin{document}
\maketitle

\begin{abstract}
Let $f$ be the $\fq$-linear map over $\F_{q^{2n}}$ defined by $x\mapsto x+ax^{q^s}+bx^{q^{n+s}}$ with $\gcd(n,s)=1$. It is known that the kernel of $f$ has dimension at most $2$, as proved by Csajb\'ok et al. in ``A new family of MRD-codes'' (2018).
For $n$ big enough, e.g. $n\geq5$ when $s=1$, we classify the values of $b/a$ such that the kernel of $f$ has dimension at most $1$.
To this aim, we translate the problem into the study of some algebraic curves of small degree with respect to the degree of $f$; this allows to use intersection theory and function field theory together with the Hasse-Weil bound.
Our result implies a non-scatteredness result for certain high degree scattered binomials, and the asymptotic classification of a family of rank metric codes.
\end{abstract}

\bigskip
{\it AMS subject classification:} 11T06,	11G20, 51E20, 51E22

\bigskip
{\it Keywords:} linearized polynomial, algebraic curve, linear set, MRD code, Hasse-Weil bound

\section{Introduction}
Let $q$ be a prime power and let $m$ be a positive integer.
A $q$-\emph{polynomial}, or \emph{linearized polynomial}, over $\F_{q^m}$ is a polynomial of the form
\[f(x)=\sum_{i=0}^t a_i x^{q^i},\]
where $a_i\in \F_{q^m}$, $t$ is a positive integer.
If $a_t \neq 0$, we say that $t=\deg_q f(x)$ is the $q$-\emph{degree} of $f$.
We denote by $\mathcal{L}_{m,q}$ the set of all $q$-polynomials over $\F_{q^m}$ and by $\tilde{\mathcal{L}}_{m,q}$ the following quotient $\mathcal{L}_{m,q}/(x^{q^m}-x)$.
The $\F_q$-linear maps of $\F_{q^m}$ can be identified with the polynomials in $\tilde{\mathcal{L}}_{m,q}$.
This shows the relevance of linearized polynomials in the theory of finite fields and their algebraic and geometric applications.
A fundamental problem in the theory of linearized polynomials is to characterize precisely the dimension of the kernel of the given polynomial in terms of its coefficients.
Results in this direction are given in \cite{qres,teoremone,GQ2009,McGuireSheekey,PZ2019,wl,Zanella}.

Let $n,s$ be positive integers such that $s<2n$ $\gcd(s,n)=1$.
First in \cite{CMPZ}, and later in \cite{PZ2019}, the following polynomials are investigated
\begin{equation}\label{eq:form} f_{a,b,s}(x)=x+ax^{q^s}+bx^{q^{s+n}} \in \tilde{\cL}_{2n,q}.
\end{equation}
The following results are known from \cite{CMPZ} and \cite{PZ2019}:
\begin{itemize}
    \item if $\N_{q^{2n}/q^n}(a)=\N_{q^{2n}/q^n}(b)$, then $\dim_{\F_q} \ker f_{a,b,s}(x)\leq 1$;
    \item if $\N_{q^{2n}/q^n}(a)\neq \N_{q^{2n}/q^n}(b)$, then $\dim_{\F_q} \ker f_{a,b,s}(x)\leq 2$;
\end{itemize}
where $\N_{q^{2n}/q^n}(x)=x^{1+q^n}$.

Our main result is Theorem \ref{th:mainmain} and concerns the existence, for every $\delta\in\F_{q^{2n}}$ with $\mathrm{N}_{q^{2n}/q^n}(\delta)\notin\{0,1\}$, of an element $a\in\F_{q^{2n}}$ such that the kernel of $f_{a,\delta a,s}$ has dimension $2$, providing $n$ is large enough.

\begin{theorem}\label{th:mainmain}
Let $q$ be a prime power and $n,s$ be two relatively prime positive integers. Suppose that
\[
n\geq\begin{cases} 4s+2 & \textrm{if}\; q=3\textrm{ and }s>1,\,\textrm{or}\;q=2\textrm{ and }s>2; \\ 4s+1 & \textrm{otherwise}. \end{cases}
\]
For every $\delta \in \F_{q^{2n}}^*$ with $\N_{q^{2n}/q^n}(\delta)\neq 1$ there exists $a \in \F_{q^{2n}}^*$ such that
\[ \dim_{\F_q} \ker (f_{a,b,s}(x))=2, \]
where $b=\delta a$.
\end{theorem}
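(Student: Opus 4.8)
The plan is to reduce the statement to the existence of a point with prescribed Frobenius behaviour on a plane curve whose degree, $2q^s$, is small compared with $\deg f_{a,b,s}=q^{n+s}$, and then to apply the Hasse--Weil bound to a twist of that curve. Concretely, I would first reformulate the problem in terms of the fixed $q$-polynomial $h(x)=x^{q^s}+\delta x^{q^{n+s}}\in\tilde{\cL}_{2n,q}$ (which depends only on $\delta$): writing $b=\delta a$ we have $f_{a,\delta a,s}(x)=x+a\,h(x)$, so for $x\neq0$, $f_{a,\delta a,s}(x)=0$ iff $k(x):=h(x)/x=-1/a$. A short computation shows $\ker h=\{0\}$ under the hypothesis $\N_{q^{2n}/q^n}(\delta)\neq1$, so $k$ is nowhere zero on $\F_{q^{2n}}^*$; hence it is enough to produce $x_1,x_2\in\F_{q^{2n}}^*$, $\fq$-linearly independent, with $k(x_1)=k(x_2)$. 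Indeed, with $\lambda=k(x_1)\neq0$, $a=-1/\lambda$, $b=\delta a$, one gets $x_1,x_2\in\ker f_{a,b,s}$, so $\dim_{\fq}\ker f_{a,b,s}\geq2$, and since $\N_{q^{2n}/q^n}(b)=\N_{q^{2n}/q^n}(\delta)\N_{q^{2n}/q^n}(a)\neq\N_{q^{2n}/q^n}(a)$, the bound recalled above from \cite{CMPZ,PZ2019} forces $\dim_{\fq}\ker f_{a,b,s}=2$.

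Next I would turn $k(x_1)=k(x_2)$ into a plane curve. Put $z=x_2/x_1\notin\fq$; expanding $k(x_1)=k(zx_1)$ and dividing by $x_1^{q^s-1}$ shows that, for $z$ with $z^{q^{n+s}}\neq z$, the existence of a suitable $x_1$ is equivalent to
\[c(z):=\frac{z-z^{q^s}}{\delta\,(z^{q^{n+s}}-z)}=x_1^{q^{n+s}-q^s}=\bigl(x_1^{q^n-1}\bigr)^{q^s},\]
and, since $x\mapsto x^{q^n-1}$ surjects onto the group $\mu$ of $(q^n+1)$-st roots of unity in $\F_{q^{2n}}^*$ and $t\mapsto t^{q^s}$ permutes $\mu$, such $x_1$ exists exactly when $c(z)\in\mu$, i.e.\ $\N_{q^{2n}/q^n}(c(z))=1$. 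With $X=z$, $Y=z^{q^n}$ this reads $G(X,Y)=0$, where
\[G(X,Y)=(X^{q^s}-X)(Y^{q^s}-Y)-m\,(Y^{q^s}-X)(X^{q^s}-Y)=(1-m)(X^{q^s}-X)(Y^{q^s}-Y)+m\,(Y-X)^{q^s+1}\]
and $m=\N_{q^{2n}/q^n}(\delta)\in\fqn$. Thus it suffices to find $z\in\F_{q^{2n}}\setminus\fq$ with $z^{q^{n+s}}\neq z$ and $G(z,z^{q^n})=0$. The curve $\cC\colon G=0$ has degree exactly $2q^s$ (leading form $(1-m)X^{q^s}Y^{q^s}\neq0$, since $m\neq1$), is defined over $\fqn$, and admits the involution $\iota\colon(X,Y)\mapsto(Y,X)$ because $G(Y,X)=G(X,Y)$ (here $(-1)^{q^s+1}=1$).

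The key observation is that the solutions $z\in\F_{q^{2n}}$ of $G(z,z^{q^n})=0$ are exactly the fixed points of $\psi:=\iota\circ\varphi^{n}$ on $\cC$, $\varphi$ being the $q$-power map. Since $\iota$ is defined over $\fq$ we have $\psi^{2}=\varphi^{2n}$, so on a smooth projective model $\widetilde{\cC}$ (to which $\iota$ and $\varphi^{n}$ lift) $\psi$ is the $q^{n}$-power Frobenius of a twist $\widetilde{\cC}^{\psi}$ of $\widetilde{\cC}$ defined over $\fqn$, of genus $\mathfrak{g}=\mathfrak{g}(\cC)$. Hence, by Hasse--Weil,
\[\#\{z\in\F_{q^{2n}}:G(z,z^{q^n})=0\}\;\geq\;q^{n}+1-2\mathfrak{g}\,q^{n/2}-O(q^{2s}),\]
the error term bounding the fixed points of $\psi$ over the finitely many singular points and points at infinity of $\cC$. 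One then discards the non-admissible $z$: using the factored form of $G$ and a little intersection theory, $\cC\cap\{X=Y\}$ and $\cC\cap\{Y^{q^s}=X\}$ meet the locus $\{(z,z^{q^n})\}$ only for $z\in\fq$ (plus a bounded number of extra $z$ when $s$ is even), so the set to discard has size $O(q^{s})$.

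It remains to bound $\mathfrak{g}$ sharply. Homogenizing, $\cC$ meets the line at infinity only at $[1:0:0]$ and $[0:1:0]$; resolving these (and any finite singularities) and feeding the outcome into $\mathfrak{g}\leq\binom{2q^s-1}{2}-\sum_P\delta_P$ should give $\mathfrak{g}$ of order $q^{2s}$ with a constant small enough that $q^{n}+1-2\mathfrak{g}\,q^{n/2}-O(q^{2s})>0$ precisely in the stated range of $n$ (the threshold $4s+2$ for $q\in\{2,3\}$ reflecting the weaker lower-order terms there), and any surviving $z$ then produces $x_1,x_2$ as above and hence the required $a$. I expect the genuine difficulty to lie exactly here: showing that $\cC$ (or the component of $\widetilde{\cC}$ stabilized by $\psi$) is absolutely irreducible and computing its genus precisely enough, which is where the local analysis at the two points at infinity, the function-field computations, and the careful accounting of excluded points all come in.
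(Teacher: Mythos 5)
Your reduction is sound and, up to notation, reproduces the paper's first step: writing $f_{a,\delta a,s}(x)=x+ah(x)$ and asking for two independent zeros is exactly the paper's normalization of $\delta$ (Theorem \ref{th:deltachoice}), and your condition $\N_{q^{2n}/q^n}\bigl(\frac{z-z^{q^s}}{\delta(z^{q^{n+s}}-z)}\bigr)=1$ is identical to the norm condition the paper extracts there and in Theorem \ref{th:main}. Where you diverge is the geometric step: the paper passes to the $\iota$-invariant coordinates $S=z+z^{q^n}$, $T=-z^{1+q^n}$ (plus the auxiliary $A,B$), so that the condition becomes the existence of an ordinary affine $\F_{q^n}$-rational point on an explicit plane curve over $\F_{q^n}$, whereas you keep the conjugate coordinates $(X,Y)=(z,z^{q^n})$ and count fixed points of the twisted Frobenius $\psi=\iota\circ\varphi^n$ on $\cC\colon G(X,Y)=0$. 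The twist formalism itself is fine ($\iota$ is an $\F_q$-rational involution, $\psi^2=\varphi^{2n}$, and fixed points of $\psi$ are the $\F_{q^n}$-points of a quadratic twist of the same genus), and the bookkeeping of excluded $z$'s, though loosely stated, is repairable.

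The genuine gap is that you have not established either of the two facts on which the whole Hasse--Weil step rests, and they are precisely the technical core of the problem. First, absolute irreducibility of $\cC$ (or the identification of a $\psi$-stable absolutely irreducible component) is only ``expected''; since $\cC$ is a double cover of its $\iota$-quotient, it may a priori split into two components interchanged or fixed by $\iota$, and in the split case the twist argument needs a component stabilized by $\psi$ and defined over the right field --- none of this is verified. Second, the genus estimate: the naive bound for a plane curve of degree $2q^s$ is of order $q^{4s}$, which would only yield a threshold near $n\geq 8s$; to reach $n\geq 4s+1$ you need genus of order $q^{2s}$, i.e.\ a full resolution of the (highly singular) points at infinity, which you defer. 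The paper does exactly this work, but on the quotient curve: it exhibits it as a Kummer cover followed by an Artin--Schreier cover (for $q$ odd; a similar tower for $q$ even), proving irreducibility and computing the genus exactly as $q^{2s}-q^s-1$. Moreover, even if your curve is irreducible, it is essentially a degree-$2$ cover of the paper's genus-$(q^{2s}-q^s-1)$ curve, so by Hurwitz its genus should be about $2q^{2s}$; the resulting Hasse--Weil condition $q^{n/2}\gtrsim 4q^{2s}$ would then miss the stated bound $n\geq 4s+1$ for small $q$ unless further savings are found. In short: the reduction is correct and parallel to the paper, but the irreducibility proof and the sharp genus computation --- the actual substance of Sections \ref{sec:qodd}--\ref{sec:qeven} --- are missing, and working upstairs rather than on the $\iota$-quotient likely costs a factor of two in the genus that the paper's choice of variables avoids.
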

In Remark \ref{rem:adjoint} we show that we can always suppose $n>2s$, up to considering the adjoint polynomial.

The first step in the proof of Theorem \ref{th:mainmain} is to manipulate the shape of $f_{a,b,s}(x)$ to translate the condition on the dimension of the kernel into the existence of $\mathbb{F}_{q^n}$-rational points in the intersection of certain $\F_{q^n}$-rational hypersurfaces, which are described in Theorem \ref{th:main}.
Then we prove that this intersection is described by means of an $\F_{q^{2n}}$-rational curve $\mathcal{X}$. Using intersection theory and function field theory, the curve $\mathcal{X}$ is shown to be absolutely irreducible of genus $q^{2s}-q^s-1$; Theorem \ref{th:mainmain} now follows by Hasse-Weil bound.

Theorem \ref{th:mainmain} also has applications in the theory of scattered polynomials. A polynomial $f(x)\in\tilde{\cL}_{m,q}$ is said to be \emph{scattered} if
\[
\dim_{\F_q}\ker(f(x)-\lambda x)\leq 1, \quad\textrm{for all}\;\; \lambda\in\F_{q^m}.
\]
Scattered polynomials have been widely investigated, especially after the paper \cite{Sheekey2016}, where Sheekey builds a bridge between scattered polynomials and rank metric codes.
The family of linearized binomials $f_{\delta,s}(x)=x^{q^s}+\delta x^{q^{n+s}}\in\tilde{\cL}_{2n,q}$ with $\delta\ne0$ contains a large number of scattered polynomials when $n$ is $3$ or $4$, as proved in \cite{CMPZ} and \cite{PZ2019}.
The question arises whether there exist other values of $n$, possibly infinitely many, for which $f_{\delta,s}(x)$ is scattered.
Many authors have considered the problem of classifying \emph{exceptional} scattered polynomials $f(x)\in\tilde{\cL}_{m,q}$, i.e. scattered polynomials which remain scattered over infinitely many extensions $\F_{q^{\ell m}}$ of $\F_{q^m}$; partial classification results have been provided by Bartoli and Zhou \cite{BZ}, Bartoli and Montanucci \cite{BM}, Ferraguti and Micheli \cite{FM}.
Their results rely on the fact that the order of $\F_{q^{\ell m}}$ is much larger than the degree of $f(x)$; as a matter of fact, the key role in \cite{BZ,BM} is played by the application of the Hasse-Weil bound to a curve whose degree has the same order of magnitude as $\deg f(x)$, and hence is small with respect to $q^{\ell m}$ (see \cite[Lemma 2.1]{BZ}).
The aforementioned binomial $f_{\delta,s}(x)$ is not taken into account by their results, because $\deg f_{\delta,s}(x)=q^{n+s}$ is high with respect to the order of $\F_{q^{2n}}$.
As a byproduct of Theorem \ref{th:mainmain}, we prove in Theorem \ref{th:noscatt} that $f_{\delta,s}(x)$ is not scattered when $n$ is large enough with respect to $s$; for instance, when $s=1$ it is enough to choose $n\geq5$.

Finally, in Theorem \ref{th:applMRD} we use Theorem \ref{th:mainmain} to give an asymptotic classification of the family of rank-metric codes defined by the binomials $f_{\delta,s}(x)$.

The paper is organized as follows.
Section \ref{sec:preliminaries} contains preliminary results about algebraic curves and function function fields which are used in Section \ref{sec:proof}.
Section \ref{sec:proof} is devoted to the proof of Theorem \ref{th:mainmain}; the cases $q$ odd and $q$ even are studied separately, respectively in Section \ref{sec:qodd} and in Section \ref{sec:qeven}.
Section \ref{sec:appl} provides the applications of Theorem \ref{th:mainmain}; namely, Section \ref{sec:linearsets} shows the applications to scattered polynomials and linear sets, while Section \ref{sec:MRD} shows the applications to rank metric codes.

\section{Preliminaries on algebraic curves}\label{sec:preliminaries}

Let $\cC$ be a projective, absolutely irreducible, algebraic curve over the algebraically closed field $\mathbb{K}=\overline{\mathbb{F}}_q$, embedded in a projective space $\PG(r,\mathbb{K})$ with homogeneous coordinates $(X_1\colon\ldots\colon X_{r+1})$ and not contained in the hyperplane at infinity $H_{\infty}:X_{r+1}=0$.
Let $I(\mathcal{C})$ be the ideal of $\mathcal{C}$.
Denote by $\mathbb{K}(\mathcal{C})$ the field of ($\mathbb{K}$-)rational functions on $\mathcal{C}$, briefly the function field of $\mathcal{C}$. Clearly, $\mathbb{K}(\mathcal{C})$ is generated over $\mathbb{K}$ by the coordinate functions $x_1,\ldots,x_r$ with $x_i=\frac{X_i+I(\C)}{X_{r+1}+I(\C)}$, and $\mathbb{K}(\C)\colon\mathbb{K}$ is a field extension of transcendence degree $1$.
We denote by $\mathbb{P}(\mathcal{C})$ the set of places of $\mathcal{C}$, that is, the set of places of its function field $\mathbb{K}(\mathcal{C})$.
For every $P\in\mathbb{P}(\mathcal{C})$ and every nonzero $z\in\mathbb{K}(\C)$, we denote by $v_P(z)\in\mathbb{Z}$ the valuation of $z$ at $P$; $P$ is said to be a zero (resp. a pole) of $z$ if $v_P(z)>0$ (resp. $v_P(z)<0$).

Suppose that $\C$ is defined over $\mathbb{F}_q$, i.e. $I(\C)$ is generated by polynomials over $\mathbb{F}_q$.
Then $\mathbb{F}_q(\C)$ denotes the $\mathbb{F}_q$-rational function field of $\C$, i.e. the field of $\mathbb{F}_q$-rational functions on $\mathcal{C}$.
The $\mathbb{F}_q$-rational places of $\C$ are those places $P\in\mathbb{P}(\C)$ which are defined over $\mathbb{F}_q$; that is, $\mathbb{F}_q$-rational places of $\C$ are the places of degree $1$ in $\mathbb{F}_q(\C)$, which are exactly the restriction to $\mathbb{F}_q(\C)$ of the places of $\mathbb{K}(\C)$ in the constant field extension $\mathbb{K}(\C)\colon\mathbb{F}_q(\C)$.
The center of an $\mathbb{F}_q$-rational place is an $\mathbb{F}_q$-rational point of $\cC$; conversely, if $P$ is a simple $\mathbb{F}_q$-rational point of $\cC$, then the only place centered at $P$ is $\mathbb{F}_q$-rational, and may be identified with $P$.

Let $\varphi:\C^\prime\to\C$ be a covering of curves, i.e. a non-constant rational map from the curve $\C^\prime$ to the curve $\C$, of degree $\deg(\varphi)=[\mathbb{K}(\C^\prime)\colon\mathbb{K}(\C)]$. We denote by $\varphi$ also the induced map $\mathbb{P}(\C^\prime)\to\mathbb{P}(\C)$; if $\varphi$ is $\mathbb{F}_q$-rational, then $\varphi$ maps $\mathbb{F}_q$-rational places of $\C^\prime$ to $\mathbb{F}_q$-rational places of $\C$. The pull-back of $\varphi$ is denoted by $\varphi^*:\mathbb{K}(\C)\to\mathbb{K}(\C^\prime)$.
When $P\in\mathbb{P}(\C)$ and $P^\prime\in\mathbb{P}(\C^\prime)$ satisfy $\varphi(P^\prime)=P$, we write $P^\prime|P$ and say that $P^\prime$ lies over $P$ in $\varphi$.
We denote by $e(P^\prime|P)$ the ramification index of $P^\prime|P$, that is the unique positive integer such that $v_{P^\prime}(\varphi^*(w))=e(P^\prime|P)\cdot v_P(w)$ for all $w\in\mathbb{K}(\C)$; we have $\sum_{P^\prime:P^\prime|P}e(P^\prime|P)=\deg(\varphi)$.
We say that $P^\prime$ is ramified over $P$ if $e(P^\prime|P)>1$, and totally ramified if $e(P^\prime|P)=\deg(\varphi)$; otherwise it is unramified.
A ramified place $P^\prime$ is wildly ramified (resp. tamely ramified) if $e(P^\prime|P)$ is divisible (resp. not divisible) by $p$.
We refer to \cite{HKT,Sti} for further details on algebraic curves and function fields.

\begin{theorem}\label{th:hurwitz}{\rm (Hurwitz genus formula, \cite[Theorem 3.4.13]{Sti})}
Let $\mathcal{C},\mathcal{C}^\prime$ be two absolutely irreducible curves over $\mathbb{K}=\overline{\mathbb{F}}_q$ and $\varphi:\mathcal{C}^\prime\to\mathcal{C}$ be a covering.
For every place $P$ of $\mathcal{C}$ and every place $P^\prime$ of $\mathcal{C}^\prime$ lying over $P$ in $\varphi$, let $t\in\mathbb{K}(\mathcal{C})$ be a local parameter at $P$, $t^\prime\in\mathbb{K}(\mathcal{C}^\prime)$ be a local parameter at $P^\prime$, and $\varphi^*(t)\in\mathbb{K}(\mathcal{C}^\prime)$ be the pull-back of $t$ with respect to $\varphi$.
Then
\[ 2g(\mathcal{C}^\prime)-2=\deg(\varphi)\cdot(2g(\mathcal{C})-2)+\sum_{P^\prime\in\mathbb{P}(\mathcal{C}^\prime)}v_{P^\prime}\left(\frac{d\varphi^*(t)}{d t^\prime}\right). \]
\end{theorem}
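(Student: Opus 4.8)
The plan is to derive the formula from the theory of differentials on the function fields $F=\mathbb{K}(\mathcal{C})$ and $F'=\mathbb{K}(\mathcal{C}')$, regarding the pull-back $\varphi^*$ as the inclusion $F\hookrightarrow F'$ of a finite separable extension of degree $\deg(\varphi)=[F':F]$ (separability being implicit in the statement, since otherwise the differential term vanishes identically). The single deep input I would invoke is that the space of differentials $\Omega_F$ is a $1$-dimensional $F$-vector space and that the divisor $(\omega)=\sum_{P}v_P(\omega)\,P$ of any nonzero $\omega\in\Omega_F$ is a canonical divisor, so that $\deg(\omega)=2g(\mathcal{C})-2$; this is a consequence of the Riemann--Roch theorem, which I take as known. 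Applied to $\mathcal{C}'$ it gives $\deg(\omega')=2g(\mathcal{C}')-2$ for any nonzero $\omega'\in\Omega_{F'}$. The strategy is then to fix one nonzero $\omega\in\Omega_F$, pull it back to $\varphi^*\omega\in\Omega_{F'}$, and compare the degrees of the two divisors place by place.

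First I would record the local comparison. Fix a place $P$ of $\mathcal{C}$, choose a local parameter $t\in F$ at $P$, and write $\omega=u\,dt$ with $u\in F$; since $t$ is a local parameter one has $v_P(dt)=0$, hence $v_P(\omega)=v_P(u)$. For a place $P'$ of $\mathcal{C}'$ lying over $P$, with local parameter $t'$, the pull-back satisfies
\[
\varphi^*\omega=\varphi^*(u)\,d\bigl(\varphi^*(t)\bigr)=\varphi^*(u)\cdot\frac{d\varphi^*(t)}{dt'}\,dt'.
\]
Taking valuations at $P'$ and using $v_{P'}(\varphi^*(u))=e(P'\mid P)\,v_P(u)$ together with $v_{P'}(dt')=0$, I obtain the key identity
\[
v_{P'}(\varphi^*\omega)=e(P'\mid P)\,v_P(\omega)+v_{P'}\!\left(\frac{d\varphi^*(t)}{dt'}\right).
\]
Here I would check that the term $v_{P'}(d\varphi^*(t)/dt')$ is independent of the chosen local parameter $t$ at $P$: replacing $t$ by another parameter multiplies $d\varphi^*(t)/dt'$ by the pull-back of a unit of the local ring at $P$, whose valuation at $P'$ is zero.

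Summing this identity over all $P'\in\mathbb{P}(\mathcal{C}')$, grouped according to the place $P=\varphi(P')$ lying below, and using the fundamental identity $\sum_{P'\mid P}e(P'\mid P)=\deg(\varphi)$, gives
\begin{align*}
\deg(\varphi^*\omega)&=\sum_{P\in\mathbb{P}(\mathcal{C})}v_P(\omega)\sum_{P'\mid P}e(P'\mid P)+\sum_{P'\in\mathbb{P}(\mathcal{C}')}v_{P'}\!\left(\frac{d\varphi^*(t)}{dt'}\right)\\
&=\deg(\varphi)\,\deg(\omega)+\sum_{P'\in\mathbb{P}(\mathcal{C}')}v_{P'}\!\left(\frac{d\varphi^*(t)}{dt'}\right).
\end{align*}
Substituting $\deg(\varphi^*\omega)=2g(\mathcal{C}')-2$ and $\deg(\omega)=2g(\mathcal{C})-2$ then yields the genus formula.

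The main obstacle is to justify that the last sum is finite and that $\varphi^*\omega$ is nonzero, both of which rest on the separability of $\varphi$: separability guarantees that $d\varphi^*(t)/dt'$ is a nonzero element of $F'$, hence has only finitely many zeros and poles, so the correction term $v_{P'}(d\varphi^*(t)/dt')$ vanishes for all but finitely many $P'$. This quantity is precisely the different exponent $d(P'\mid P)$, and establishing that it is nonnegative (and locating its exact value at the wildly ramified places) is the delicate local analysis one must carry out to make the summation rigorous. The one genuinely nontrivial ingredient fed into the argument is the degree formula $\deg(\omega)=2g-2$ for canonical divisors, which is where the Riemann--Roch theorem enters.
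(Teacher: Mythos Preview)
The paper does not prove this theorem; it is quoted as a preliminary result from \cite[Theorem 3.4.13]{Sti} and used as a black box in Section~\ref{sec:proof}. Your outline is the standard derivation of the Hurwitz formula via pull-back of a differential and the degree formula $\deg(\omega)=2g-2$ for canonical divisors, which is precisely the argument in Stichtenoth's reference, so there is nothing to compare against within the paper itself.
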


If $P^\prime$ is not wildly ramified, then $v_{P^\prime}\left(\frac{d\varphi^*(t)}{d t^\prime}\right)=e(P^\prime|P)-1$.
We now recall two important types of coverings.
The following results are the application of \cite[Corollary 3.7.4]{Sti} and \cite[Theorem 3.7.10]{Sti} in the case of an algebraically closed constant field $\mathbb{K}$.

\begin{theorem}\label{th:kummer}{\rm \cite[Corollary 3.7.4]{Sti}}
Let $\C\colon F(X,Y)=0$ be an absolutely irreducible plane curve defined over a finite field $\mathbb{F}_q$ of characteristic $p$, and $m$ be a positive integer with $\gcd(m,p)=1$.
Let $f(X,Y)\in\mathbb{F}_q[X,Y]$ be such that there exists an $\overline{\mathbb F}_q$-rational place $Q$ of $\mathcal{C}$ at which the valuation of the rational function $f(x,y)$ is coprime with $m$, i.e. $\gcd(v_Q(f(x,y)),m)=1$.
Let $\mathcal{C}^\prime$ be the curve given by the two affine equations $F(X,Y)=0$ and $Z^m=f(X,Y)$.
Then the following holds.
\begin{itemize}
    \item $\mathcal{C}^\prime$ is absolutely irreducible and defined over $\mathbb{F}_q$; $\C'$ is called a \emph{Kummer cover} of $\mathcal{C}$.
    \item The $\mathbb{F}_q$-rational covering $\varphi:\C^\prime\to\C$, $(X,Y,Z)\mapsto(X,Y)$, has degree $m$.
    \item For every place $P$ of $\mathcal{C}$ and every place $P^\prime$ of $\C'$ lying over $P$ in $\varphi$, we have $e(P^\prime| P)=m/r_P$, where $r_P=\gcd(v_P(f(x,y)),m)>0$.
    \item The Hurwitz genus formula reads
    \[ g(\C')=1+m(g(\C)-1)+\frac{1}{2}\sum_{P\in\mathbb{P}(\mathcal{C})}(m-r_P). \]
\end{itemize}
\end{theorem}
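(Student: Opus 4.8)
The plan is to translate everything into the language of function fields and to apply Kummer theory. Set $K=\overline{\mathbb{F}}_q(\mathcal{C})$, let $u=f(x,y)\in\mathbb{F}_q(\mathcal{C})\subseteq K$, and let $K'=K(z)$ with $z^m=u$; then $\mathcal{C}'$ is the curve with function field $K'$, and $\varphi$ corresponds to the inclusion $K\hookrightarrow K'$. First I would prove that $T^m-u$ is irreducible over $K$, so that $[K':K]=m$ and $\deg\varphi=m$. Since $\gcd(m,p)=1$, the field $\overline{\mathbb{F}}_q$ contains a primitive $m$-th root of unity, so the standard criterion applies: $T^m-u$ is irreducible over $K$ as soon as $u$ is not an $\ell$-th power in $K$ for any prime $\ell\mid m$ (and, when $4\mid m$, $u\notin -4K^4$). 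The hypothesis provides a place $Q$ with $\gcd(v_Q(u),m)=1$: if $u=w^\ell$ then $\ell\mid v_Q(u)$, and if $u\in -4K^4$ then $4\mid v_Q(u)$, either of which contradicts $\gcd(v_Q(u),m)=1$. Hence $K'/K$ has degree $m$; since $F$ and $Z^m-f$ have coefficients in $\mathbb{F}_q$, the curve $\mathcal{C}'$ is defined over $\mathbb{F}_q$, and it is absolutely irreducible because $\overline{\mathbb{F}}_q$ is algebraically closed and thus cannot be enlarged as a constant field (the existence of a totally ramified place, shown below, moreover forces $\mathbb{F}_q$ to be the full constant field over $\mathbb{F}_q$ as well).

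Next, and this is the heart of the argument, I would determine how each place $P$ of $\mathcal{C}$ splits in $\varphi$. Put $r_P=\gcd(v_P(u),m)$, write $m=r_Pm'$ and $v_P(u)=r_Pa$ with $\gcd(a,m')=1$. Passing to the completion $\widehat{\mathcal{O}}_P\cong\overline{\mathbb{F}}_q[[t]]$, write $u=t^{v_P(u)}\epsilon$ with $\epsilon$ a unit; since $\gcd(m,p)=1$ and the residue field $\overline{\mathbb{F}}_q$ is algebraically closed, Hensel's lemma yields a unit $\eta$ with $\epsilon=\eta^m$, so $(z/\eta)^m=t^{r_Pa}$. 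Using $\gcd(a,m')=1$, a Kummer-type change of variable (raising to a power $a'$ with $aa'\equiv1\pmod{m'}$ and rescaling by suitable roots of unity) transforms this into $z_1^{m'}=t$, i.e.\ locally the extension is the standard totally ramified one of degree $m'$. Since $K'/K$ is cyclic Galois of degree $m$, generated by $z\mapsto\zeta_m z$, the Galois group permutes the places over $P$ transitively with a common ramification index $e$; the local computation forces $e=m'=m/r_P$, and hence exactly $m/e=r_P$ places lie over $P$. In particular, at the place $Q$ of the hypothesis one has $r_Q=1$, so $Q$ is totally ramified, which re-proves irreducibility and pins down the constant field.

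Finally I would feed this into the Hurwitz genus formula (Theorem \ref{th:hurwitz}). Since $e(P'|P)=m/r_P$ divides $m$ and $\gcd(m,p)=1$, every ramification is tame, so the different exponent at $P'$ equals $e(P'|P)-1=m/r_P-1$; summing over the $r_P$ places above $P$ contributes $r_P(m/r_P-1)=m-r_P$, which vanishes except at the finitely many zeros and poles of $u$. Hence $2g(\mathcal{C}')-2=m(2g(\mathcal{C})-2)+\sum_{P}(m-r_P)$, which rearranges to the displayed formula for $g(\mathcal{C}')$. The only genuinely delicate point is the \emph{exact} ramification index: showing $e(P'|P)=m/r_P$ rather than merely a multiple of $m/r_P$, which is precisely where Hensel's lemma over the algebraically closed residue field together with the Kummer change of variables do the work.
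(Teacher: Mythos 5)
Your proposal is correct, but note that the paper does not prove this statement at all: it is quoted verbatim (specialized to an algebraically closed constant field) from Stichtenoth, \cite[Corollary 3.7.4]{Sti}, and your argument is essentially a reconstruction of the standard proof given there — Kummer theory for $T^m-u$ over $K=\overline{\mathbb{F}}_q(\mathcal{C})$, the local computation $e(P'|P)=m/\gcd(v_P(u),m)$ via Hensel plus the unit-absorbing change of variable, tameness from $\gcd(m,p)=1$, and the Hurwitz formula with $r_P$ places above $P$ each contributing $m/r_P-1$ to the different. The only elided step, that the completion $K'_{P'}=K_P(z)$ actually equals the Eisenstein extension $K_P(z_1)$ with $z_1^{m'}=t$ (so that $e$ is exactly $m'$ and not a proper multiple), does work out: from $z_1^{m'}=t$ and $w^{m'}=\zeta t^a$ one gets $(w/z_1^a)^{m'}=\zeta\in\overline{\mathbb{F}}_q$, hence $w\in K_P(z_1)$, so your proof is complete as sketched.
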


If $\C^\prime$ is an absolutely irreducible curve over $\mathbb{F}_q$ defined by the two affine equations $F(X,Y)=0$ and $L(Z)=f(X,Y)$, for some $f(X,Y),F(X,Y)\in\mathbb{F}_q[X,Y]$ and some separable $p$-polynomial $L(T)\in\mathbb{F}_q[T]$, then $\mathcal{C}^\prime$ is said to be a \emph{generalized Artin-Schreier cover} of the curve $\mathcal{C}:F(X,Y)=0$, with generalized Artin-Schreier covering $\varphi:\C^\prime\to\C$, $(X,Y,Z)\mapsto(X,Y)$.

\begin{theorem}\label{th:artinschreier}{\rm \cite[Theorem 3.7.10]{Sti}}
Let $\mathcal{C}:F(X,Y)=0$ be an absolutely irreducible plane curve defined over a finite field $\mathbb{F}_q$ of charateristic $p$.
Let $L(T)\in\mathbb{F}_q[T]$ be a separable $p$-polynomial of degree $\bar{q}$ with all its roots in $\mathbb{F}_q$.
Let $f(X,Y)\in\mathbb{F}_q[X,Y]$ be such that for every place $P\in\mathbb{P}(\mathcal{C})$ there exists a rational function $\omega$ on $\mathcal{C}$ (depending on $P$) satisfying either $v_P(f(x,y)-L(\omega))\geq0$ or $v_P(f(x,y)-L(\omega))=-m$ with $m>0$ and $p\nmid m$.
Define $m_P=-1$ in the former case and $m_P=m$ in the latter case.
Let $\mathcal{C}^\prime$ be the space curve given by the two affine equations $F(X,Y)=0$ and $L(Z)=f(X,Y)$.
If there exists a place $Q\in\mathbb{P}(\mathcal{C})$ with $m_Q>0$, then $\mathcal{C}^\prime$ is a generalized Artin-Schreir cover of $\C$, defined over $\mathbb{F}_q$.

With the above notation, the following holds for generalized Artin-Schreier curves.
\begin{itemize}
    \item The $\mathbb{F}_q$-rational covering $\varphi:\C^\prime\to\C$, $(X,Y,Z)\mapsto(X,Y)$, has degree $\bar{q}$.
    \item For every place $P$ of $\mathcal{C}$ and every place $P^\prime$ of $\mathcal{C}^\prime$ lying over $P$ in $\varphi$, $e(P^\prime|P)$ is equal either to $1$ or to $\bar{q}$ according to $m_P=-1$ or $m_P>0$, respectively.
    \item The Hurwitz genus formula reads
    \[ g(\mathcal{C}^\prime)=\bar{q}\cdot g(\mathcal{C})+\frac{\bar{q}-1}{2}\cdot\left(-2+\sum_{P\in\mathbb{P}(\mathcal{C})}(m_P+1)\right). \]
\end{itemize}
\end{theorem}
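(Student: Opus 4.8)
The plan is to realise $\mathcal{C}'$ as a Galois cover of $\mathcal{C}$ with elementary abelian Galois group, and then to read off the ramification and the genus from the local behaviour of $f$ together with the Hurwitz genus formula (Theorem \ref{th:hurwitz}); the argument is the natural generalisation of the classical Artin--Schreier computation \cite[Prop.~3.7.8, Thm.~3.7.10]{Sti} from degree $p$ to a separable additive polynomial of degree $\bar q=p^e$. Write $K=\mathbb{F}_q(\mathcal{C})=\mathbb{F}_q(x,y)$ and $K'=K(z)$ with $L(z)=f(x,y)$. Since $L$ is separable and additive, its $\bar q$ roots form an additive subgroup $V\subseteq\mathbb{F}_q$ of order $\bar q$; as $V\subseteq\mathbb{F}_q\subseteq K$, every root of $L(Z)-f$ has the shape $z+v$ with $v\in V$, so $K'/K$ is a (geometrically) Galois extension whose group embeds in $(V,+)$ and is therefore elementary abelian of exponent $p$. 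The additivity $L(z-\omega)=L(z)-L(\omega)=f-L(\omega)$ is what makes the local hypothesis usable: at a fixed place $P$ we may subtract the function $\omega$ provided by the statement and assume that either $v_P(f)\geq0$ (the case $m_P=-1$) or $v_P(f)=-m$ with $m>0$ and $p\nmid m$ (the case $m_P=m$).

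First I would settle the degree and the ramification. In the case $v_P(f)\ge 0$, $z$ is integral over $\mathcal{O}_P$ and the reduced equation $L(Z)=\overline{f}$ stays separable over the residue field (because $L$ is separable), so $P$ is unramified and $e(P'|P)=1$. In the case $v_P(f)=-m$ with $p\nmid m$, let $P'|P$ have index $e$. Then $v_{P'}(z)<0$ (otherwise $L(z)$ would be regular at $P'$), so in $L(z)=\sum_i a_iz^{p^i}$ the top term $a_ez^{\bar q}$ dominates and $v_{P'}(f)=v_{P'}(L(z))=\bar q\,v_{P'}(z)$. Comparing with $v_{P'}(f)=-em$ gives $\bar q\mid em$, and since $\gcd(m,\bar q)=1$ we get $\bar q\mid e$; as $e\le\bar q$ this forces $e=\bar q$, i.e. $P'$ is totally ramified, unique over $P$, with $v_{P'}(z)=-m$. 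The hypothesis that some $Q$ has $m_Q>0$ now supplies a totally ramified place, which forces $[K':K]=\bar q$ (the ramification index cannot exceed the degree, which already divides $\bar q$ since $\mathrm{Gal}(K'/K)\hookrightarrow V$), hence $L(Z)-f$ is irreducible over $K$; since ramification indices are preserved under the constant field extension to $\overline{\mathbb{F}}_q$, the same place stays totally ramified and $\mathcal{C}'$ is absolutely irreducible. All the data being defined over $\mathbb{F}_q$, the cover $\varphi$ is $\mathbb{F}_q$-rational. This yields the first two bullet points.

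It remains to compute the genus, and here lies the main obstacle: the ramification is wild ($p\mid\bar q$), so the different exponent is not $e-1$ and must be computed by hand. I would show that at a totally ramified place $P'$ (with $m_P=m$) the different exponent equals $(\bar q-1)(m+1)$. The cleanest route is through the lower ramification groups $G_i$ of the totally ramified Galois extension $K'/K$: one proves that the filtration has a single jump at $m$, that is $G_0=\dots=G_m=\mathrm{Gal}(K'/K)$ and $G_{m+1}=\{1\}$, whence $d(P'|P)=\sum_{i\ge0}(|G_i|-1)=(m+1)(\bar q-1)$. The technical subtlety is that $z$ itself is not a uniformizer (it has a pole of order $m$), so to evaluate the $G_i$ one must pass to an explicit local parameter $w$ at $P'$, e.g. a monomial $w=z^{\alpha}t^{\beta}$ with $-\alpha m+\beta\bar q=1$ (possible since $\gcd(m,\bar q)=1$, where $t$ is a local parameter at $P$), and estimate $v_{P'}(\sigma(w)-w)$ for $\sigma\in\mathrm{Gal}(K'/K)\setminus\{1\}$ using $\sigma(z)=z+v_\sigma$; alternatively one may filter $V$ through a chain of index-$p$ subgroups and reduce to the classical degree-$p$ computation $(p-1)(m+1)$ by transitivity of the different. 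Either way, unramified places ($m_P=-1$) contribute $0=(\bar q-1)(m_P+1)$ and each ramified place contributes $(\bar q-1)(m_P+1)$, so the total different degree is $(\bar q-1)\sum_P(m_P+1)$.

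Finally I would feed this into the Hurwitz genus formula of Theorem \ref{th:hurwitz}:
\[
2g(\mathcal{C}')-2=\bar q\,(2g(\mathcal{C})-2)+(\bar q-1)\sum_{P\in\mathbb{P}(\mathcal{C})}(m_P+1),
\]
which rearranges at once to the claimed identity $g(\mathcal{C}')=\bar q\,g(\mathcal{C})+\frac{\bar q-1}{2}\big(-2+\sum_P(m_P+1)\big)$. I expect the degree/ramification dichotomy to be routine once the domination of the top term of $L(z)$ is observed; the genuinely delicate point, exactly as in the classical case, is the wild different computation of the previous paragraph.
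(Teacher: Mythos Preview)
The paper does not supply a proof of this statement: Theorem~\ref{th:artinschreier} is quoted from Stichtenoth \cite[Theorem~3.7.10]{Sti} as a preliminary tool in Section~\ref{sec:preliminaries}, with no argument given in the paper itself. There is therefore nothing in the paper to compare your attempt against.

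For what it is worth, your sketch is the standard argument one finds in Stichtenoth's treatment and is essentially correct: the Galois structure comes from the additive group $V$ of roots of $L$; the unramified/totally ramified dichotomy follows from the domination of the top term of $L(z)$ together with $\gcd(m,\bar q)=1$; and the different exponent $(\bar q-1)(m_P+1)$ at a totally ramified place is obtained from the single jump at $m$ in the lower ramification filtration (or, as you note, by reducing to a tower of degree-$p$ Artin--Schreier steps via transitivity of the different). The one place where a grader might want more detail is your claim that the filtration jumps exactly at $m$: your proposed local parameter $w=z^\alpha t^\beta$ with $-\alpha m+\beta\bar q=1$ is the right device, but the verification that $v_{P'}(\sigma(w)-w)=m+1$ for every nontrivial $\sigma$ requires a short computation (expanding $\sigma(z^\alpha)=(z+v_\sigma)^\alpha$ and tracking the lowest-order contribution) that you have only gestured at.
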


We now recall the well-known Hasse-Weil bound.

\begin{theorem}\label{th:hasseweil}{\rm \cite[Theorem 5.2.3]{Sti} (Hasse-Weil bound)}
Let $\mathcal{C}$ be an absolutely irreducible curve defined over $\mathbb{F}_q$ and with genus $g$. Then the number $N_{q}$ of $\mathbb{F}_q$-rational places of $\mathcal{C}$ satisfies
\[q+1-2g\sqrt{q}\leq N_{q} \leq q+1+2g\sqrt{q}.\]
\end{theorem}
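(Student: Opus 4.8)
The plan is to prove the bound through the theory of the zeta function of the function field $\mathbb{F}_q(\mathcal{C})$, since this is the route that makes the symmetric two-sided estimate $|N_q-(q+1)|\le 2g\sqrt q$ transparent. For every $r\ge 1$ let $N_r$ denote the number of places of degree $1$ of the constant field extension $\mathbb{F}_{q^r}(\mathcal{C})$, so that $N_1=N_q$, and for $n\ge 0$ let $A_n$ be the number of effective divisors of degree $n$ on $\mathcal{C}$ defined over $\mathbb{F}_q$. First I would introduce the zeta function $Z(t)=\sum_{n\ge 0}A_n t^n=\exp\left(\sum_{r\ge 1}\frac{N_r}{r}t^r\right)$ and establish, by means of the Riemann--Roch theorem, that $Z(t)$ is a rational function of the shape $Z(t)=\frac{L(t)}{(1-t)(1-qt)}$, where $L(t)\in\mathbb{Z}[t]$ has degree $2g$ and $L(0)=1$. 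The splitting of the denominator reflects the two trivial contributions, coming from divisors of degree $0$ and from the Riemann--Roch regime $n>2g-2$, while the finiteness of the class number guarantees that $L$ is a genuine polynomial.

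Next I would record the functional equation $L(t)=q^{g}t^{2g}L\!\left(\tfrac{1}{qt}\right)$, which again follows from Riemann--Roch together with the duality attached to the canonical divisor on $\mathcal{C}$. Writing $L(t)=\prod_{i=1}^{2g}(1-\alpha_i t)$ over $\mathbb{C}$, a logarithmic-derivative comparison of the two expressions for $Z(t)$ yields the explicit formula $N_r=q^r+1-\sum_{i=1}^{2g}\alpha_i^{\,r}$ for every $r\ge 1$; moreover the functional equation forces the multiset $\{\alpha_1,\dots,\alpha_{2g}\}$ to be invariant under $\alpha\mapsto q/\alpha$. At this point the theorem is reduced, by taking $r=1$ and the triangle inequality, to the Riemann Hypothesis for the curve $\mathcal{C}$, namely $|\alpha_i|=\sqrt q$ for all $i$: granting this, $|N_q-(q+1)|=\left|\sum_{i=1}^{2g}\alpha_i\right|\le\sum_{i=1}^{2g}|\alpha_i|=2g\sqrt q$, which is exactly the assertion.

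The hard part, and the genuine content of the theorem, is precisely the estimate $|\alpha_i|=\sqrt q$. I see two standard ways to carry it out. The first is Weil's geometric argument on the surface $\mathcal{C}\times\mathcal{C}$: one interprets the $\alpha_i$ as eigenvalues of the $q$-power Frobenius acting on divisor classes and applies the Castelnuovo--Severi (Hodge index) inequality to the graph of Frobenius and to the diagonal, so that positivity of the intersection form on the relevant subspace gives $|\alpha_i|\le\sqrt q$, after which the functional equation upgrades this to equality. The second, more elementary, route is the Stepanov--Bombieri method: by constructing auxiliary functions on $\mathcal{C}$ with prescribed high-order vanishing one proves, simultaneously over all constant field extensions, a one-sided bound $N_r\le q^r+1+c_g\,q^{r/2}$ with a constant $c_g$ depending only on $g$. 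Substituting the explicit formula gives $\sum_i\alpha_i^{\,r}\ge -c_g q^{r/2}$ for all large $r$; were some $|\alpha_j|>\sqrt q$, a Dirichlet/equidistribution argument would produce values of $r$ for which the terms of largest modulus accumulate to a quantity more negative than $-c_g q^{r/2}$, a contradiction. Hence $|\alpha_i|\le\sqrt q$ for all $i$, and the involution $\alpha\mapsto q/\alpha$ forces $|\alpha_i|=\sqrt q$.

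I expect this Riemann Hypothesis step to be the main obstacle by a wide margin: everything preceding it, namely rationality of $Z(t)$, the functional equation, and the explicit formula for $N_r$, is formal bookkeeping with Riemann--Roch, whereas the bound $|\alpha_i|=\sqrt q$ requires either the positivity of an intersection pairing on $\mathcal{C}\times\mathcal{C}$ or a delicate explicit construction of auxiliary functions. Once it is in place, the desired inequality for $r=1$ is immediate, and in fact the same computation yields the sharper estimate $|N_r-(q^r+1)|\le 2g\,q^{r/2}$ over every extension $\mathbb{F}_{q^r}$.
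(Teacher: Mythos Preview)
Your outline is a correct and standard sketch of the proof of the Hasse--Weil bound via the zeta function, the functional equation, and the Riemann Hypothesis for curves over finite fields. However, there is nothing to compare it to in the paper: Theorem~\ref{th:hasseweil} is not proved there but merely quoted as a known result, with an explicit citation to \cite[Theorem 5.2.3]{Sti}. The paper uses the Hasse--Weil bound as a black box in Proposition~\ref{prop:HWqodd} and Corollary~\ref{cor:mainmain_qeven}; it never attempts to establish it.

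So your proposal is not wrong, but it is answering a question the paper does not ask. If your goal was to supply the missing proof, what you have written is an accurate high-level roadmap of the argument in Stichtenoth's Chapter~5 (or, alternatively, of Bombieri's elementary approach), with the correct identification of the Riemann Hypothesis step $|\alpha_i|=\sqrt{q}$ as the only substantial difficulty.
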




\section{Proof of Theorem \ref{th:mainmain}}\label{sec:proof}

In this section we prove Theorem \ref{th:mainmain}. First we determine necessary and sufficient conditions on $a$ and $b$ for $f_{a,b,s}(x)$ having kernel of dimension $2$; cf. Theorem \ref{th:main}. Then we investigate such conditions by means of algebraic-geometric tools.

The first remark shows that different choices of $a,b$ with the same norm of $b/a$ over $\mathbb{F}_{q^n}$ provide polynomials $f_{a,b,s}(x)$ with the same behaviour.

\begin{remark}\label{rk:normdelta}
Assume that the linearized polynomial $f_{a,b,s}(x)=x+ax^{q^s}+bx^{q^{s+n}} \in \F_{q^{2n}}[x]$,
with $\gcd(s,n)=1$ and $b=\delta a$, has kernel of dimension two.
Clearly, for each $\lambda \in \F_{q^{2n}}^*$ we have
\[ \dim_{\F_q} \ker(\lambda^{-1}f_{a,b,s}(\lambda x))=2, \]
where
\[\lambda^{-1}f_{a,b,s}(\lambda x)=x+a \lambda^{q^s-1}x^{q^s}+a\lambda^{q^s-1}\delta \lambda^{q^s(q^n-1)}x^{q^{s+n}}=f_{a',b',s}(x),\]
with $a'=a\lambda^{q^s-1}$, $\delta'=\lambda^{q^s(q^n-1)}\delta$ and $b'=a'\delta'$.
Note that for each element $\delta' \in \F_{q^{2n}}$ with $\N_{q^{2n}/q^n}(\delta')=\N_{q^{2n}/q^n}(\delta)$ there exists $\lambda \in \F_{q^{2n}}$ such that $\delta'=\delta \lambda^{q^s(q^n-1)}$.
Therefore, if $\dim_{\F_q} \ker(f_{a,b,s}(x))=2$, with $b=\delta a$, then for each $\delta' \in \F_{q^{2n}}$ with $\N_{q^{2n}/q^n}(\delta')=\N_{q^{2n}/q^n}(\delta)$ there exists $a' \in \F_{q^{2n}}$ such that $\dim_{\F_q} \ker(f_{a',b',s}(x))=2$, with $b'=\delta' a'$.
\end{remark}

The second remark shows that we may assume $s<n/2$.

\begin{remark}\label{rem:adjoint}
The \emph{adjoint} of a $q$-polynomial $f(x)=\sum_{i=0}^{n-1}a_i x^{q^i}$, with respect to the bilinear form $\langle x,y\rangle=\mathrm{Tr}_{q^n/q}(xy)$, is given by
\[\hat{f}(x)=\sum_{i=0}^{n-1}a_{i}^{q^{n-i}} x^{q^{n-i}}.\]
In particular, if $f(x)$ is a $q$-polynomial of shape \eqref{eq:form}, then
\[ f_{a,b,s}(x)=x+ax^{q^s}+bx^{q^{n+s}}\in \tilde{\mathcal{L}}_{2n,q}, \]
with $\gcd(s,n)=1$ and its adjoint is
\[ \hat{f}_{a,b,s}(x)=x+a^{q^{2n-s}}x^{q^{2n-s}}+b^{q^{n-s}}x^{q^{n-s}}. \]
Therefore, choosing $s^\prime=2n-s$, $a^\prime=a^{q^{2n-s}}$, $b^\prime=b^{q^{n-s}}$, we get
\[ \hat{f}_{a,b,s}(x)=f_{a',b',s'}(x), \]
while choosing $s^{\prime\prime}=n-s$, $a^{\prime\prime}=b^{q^{n-s}}$, $b^{\prime\prime}=a^{q^{2n-s}}$,  we get
\[ \hat{f}_{a,b,s}(x)=f_{a^{\prime\prime},b^{\prime\prime},s^{\prime\prime}}(x), \]
i.e. $\hat{f}_{a,b,s}(x)$ is of shape \eqref{eq:form}.
Therefore, the family of $q$-polynomials we are studying is closed by the adjoint operation.
Furthermore, we underline that by \cite[Lemma 2.6]{BGMP2015}, the kernels of $f_{a,b,s}$ and $\hat{f}_{a,b,s}$ have the same dimension (see also \cite[pages 407--408]{CsMP}).
Thus, we can assume $s< n/2$.
\end{remark}

We now prove that the shape of $\delta$ can be chosen as in \eqref{eq:deltachoice}.

\begin{theorem}\label{th:deltachoice}
Let $f_{a,b,s}(x) \in \F_{q^{2n}}[x]$, with $b=a\delta$.
Then $\dim_{\F_q}\ker(f_{a,b,s}(x))=2$ if and only if $\dim_{\F_q}\ker(f_{\overline{a},\overline{b},s}(x))=2$, with
\begin{equation}\label{eq:deltachoice}
\overline{\delta}=\frac{\xi^{q^{s+n}}-\xi^{q^n}}{\xi^{q^n}-\xi^{q^s}},
\end{equation}
for some $\xi \in \F_{q^{2n}}\setminus\F_{q^n}$ and some $\overline{a}\in \F_{q^{2n}}$, $\overline{b}=\overline{\delta}\overline{a}$.
\end{theorem}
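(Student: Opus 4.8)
The plan is to reduce the classification of pairs $(a,b)$ with $\dim_{\F_q}\ker f_{a,b,s}=2$ to a normalized form of $\delta=b/a$, exploiting the substitution from Remark \ref{rk:normdelta}. By that remark, the dimension of the kernel of $f_{a,b,s}(x)$ depends on $\delta=b/a$ only through its norm $\N_{q^{2n}/q^n}(\delta)$, since replacing $\delta$ by $\delta\lambda^{q^s(q^n-1)}$ (and $a$ by $a\lambda^{q^s-1}$) does not change whether the kernel has dimension $2$, and the map $\lambda\mapsto\lambda^{q^s(q^n-1)}$ realizes every element of $\F_{q^{2n}}^*$ of norm $1$ over $\F_{q^n}$ as such a scalar (because $\gcd(s,n)=1$ makes $q^s$ invertible modulo $q^n-1$, so $\lambda\mapsto\lambda^{q^s}$ permutes $\F_{q^{2n}}^*$, and $\mu\mapsto\mu^{q^n-1}$ surjects onto the norm-one subgroup). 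Hence it suffices to show that the element $\overline{\delta}$ in \eqref{eq:deltachoice} has the same norm over $\F_{q^n}$ as an arbitrary prescribed $\delta$ with $\N_{q^{2n}/q^n}(\delta)\neq 1$ — that is, the set of norms $\N_{q^{2n}/q^n}(\overline{\delta})$, as $\xi$ ranges over $\F_{q^{2n}}\setminus\F_{q^n}$, covers all of $\F_{q^n}^*\setminus\{1\}$ (equivalently, is surjective onto $\F_{q^n}\setminus\{0,1\}$). Once that is established, given $\delta$ we choose $\xi$ with $\N_{q^{2n}/q^n}(\overline{\delta})=\N_{q^{2n}/q^n}(\delta)$, set $\overline{a}:=a\lambda^{q^s-1}$ for the appropriate $\lambda$, and conclude by Remark \ref{rk:normdelta}.

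The core of the argument is therefore the norm computation for $\overline{\delta}=\dfrac{\xi^{q^{s+n}}-\xi^{q^n}}{\xi^{q^n}-\xi^{q^s}}$. Writing $y:=\xi^{q^n}$ (the conjugate of $\xi$ over $\F_{q^n}$), we have $\xi^{q^{s+n}}=y^{q^s}$, so $\overline{\delta}=\dfrac{y^{q^s}-y}{\,y-\xi^{q^s}\,}$. I would compute $\overline{\delta}^{q^n}$ by applying the $q^n$-power map, which swaps $\xi\leftrightarrow y$: this gives $\overline{\delta}^{q^n}=\dfrac{\xi^{q^s}-\xi}{\,\xi-y^{q^s}\,}$. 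Multiplying, the numerator becomes $(y^{q^s}-y)(\xi^{q^s}-\xi)$ and the denominator $(y-\xi^{q^s})(\xi-y^{q^s})=-(\xi^{q^s}-y)(\xi-y^{q^s})$; after expanding one sees the two products differ in a controlled way, and the upshot is a closed-form expression for $\N_{q^{2n}/q^n}(\overline{\delta})=\overline{\delta}^{1+q^n}$ as a rational function of $\xi,\xi^{q^s},y,y^{q^s}$ that simplifies — I expect to $\dfrac{(y^{q^s}-y)(\xi-\xi^{q^s})}{(\xi-y)(\xi^{q^s}-y^{q^s})}$ or a similarly symmetric shape. The point of this bookkeeping is to exhibit $\N_{q^{2n}/q^n}(\overline{\delta})$ as a genuinely nonconstant function of $\xi$ taking values in $\F_{q^n}$, and to identify which values are (and are not) attained; the excluded value $1$ should correspond exactly to the degenerate locus where the displayed fraction degenerates.

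The main obstacle is proving surjectivity of $\xi\mapsto\N_{q^{2n}/q^n}(\overline{\delta})$ onto $\F_{q^n}\setminus\{0,1\}$, rather than merely showing the map is nonconstant. One clean route: once $\N_{q^{2n}/q^n}(\overline{\delta})$ is written as an explicit rational function $R(\xi)\in\F_{q^n}(\xi)$ invariant under $\xi\mapsto\xi^{q^n}$ (hence a rational function of the two elementary symmetric functions $\xi+\xi^{q^n}$, $\xi^{1+q^n}\in\F_{q^n}$, i.e. of the coefficients of the minimal polynomial of $\xi$ over $\F_{q^n}$), one reparametrizes by those symmetric functions and solves $R=c$ for each target $c\in\F_{q^n}\setminus\{0,1\}$, checking that a solution $\xi$ lying genuinely outside $\F_{q^n}$ exists — i.e. that the resulting quadratic over $\F_{q^n}$ is irreducible. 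Alternatively, if the simplified form of $\overline{\delta}$ itself already shows $\overline{\delta}\notin\F_{q^n}$ automatically, then $\overline{\delta}\mapsto\overline{\delta}^{1+q^n}$ is just the norm map $\F_{q^{2n}}^*\to\F_{q^n}^*$ restricted to the image of $\xi\mapsto\overline{\delta}$, and one only needs that image to be large enough to cover every norm class; a dimension count on the (one-parameter) family $\{\overline{\delta}(\xi)\}$ against the norm fibres (each of size $q^n+1$) should suffice. I would first pin down the exact simplified form of $\N_{q^{2n}/q^n}(\overline{\delta})$, as that choice dictates which of these two finishes is shorter, and handle the boundary cases $\xi\in\F_{q^n}$, $\xi^{q^s}=\xi^{q^n}$ separately since they are exactly where $\overline{\delta}$ is undefined.
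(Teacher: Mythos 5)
Your appeal to Remark \ref{rk:normdelta} is fine, but the step you make the heart of the argument --- surjectivity of $\xi\mapsto\N_{q^{2n}/q^n}(\overline{\delta})$, for $\xi\in\F_{q^{2n}}\setminus\F_{q^n}$, onto $\F_{q^n}\setminus\{0,1\}$ --- is both far more than Theorem \ref{th:deltachoice} needs and false at this level of generality. The theorem carries no hypothesis on $n$: it is a purely algebraic normalization of $\delta$. The image question you propose to settle is essentially Theorem \ref{th:mainmain} itself (the corollary right after Theorem \ref{th:deltachoice} reformulates the main problem as exactly this question), and in the paper it is resolved only for $n\geq 4s+1$ (resp.\ $4s+2$) through the curves \eqref{eq:curveqodd} and \eqref{eq:curveqeven}, their genera, and the Hasse--Weil bound. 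Moreover it genuinely fails for small $n$: for $n=3$, $q>4$ (and for $n=4$, $q$ odd, $\delta^2=-1$) there exist $\delta$ with $\N_{q^{2n}/q^n}(\delta)\notin\{0,1\}$ for which $L_{\delta,s}$ is scattered \cite{CMPZ,PZ2019}, so no $a$ yields a two-dimensional kernel, and such norms lie outside the image of your map (every value $\N_{q^{2n}/q^n}(\overline{\delta})$ with $\xi\notin\F_{q^n}$ \emph{is} attained by a pair with two-dimensional kernel, by taking $x_0=1$, $y_0=\xi$ and solving for $\delta$ and $a$). Hence neither of your proposed finishes (solving $R=c$ after passing to the symmetric functions of $\xi$, or a counting argument on norm fibres) can work: deciding which norms are attained is precisely the hard problem the rest of the paper is devoted to, and it cannot be done by such elementary means.

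The missing idea is to build $\xi$ directly from the kernel, which removes any surjectivity issue. If $\dim_{\F_q}\ker f_{a,\delta a,s}=2$, choose $\F_q$-independent $x_0,y_0$ in the kernel and set $\xi=y_0/x_0\notin\F_q$; eliminating $a$ from $f_{a,\delta a,s}(x_0)=f_{a,\delta a,s}(y_0)=0$ gives $\delta\,(y_0x_0^{q^{s+n}}-x_0y_0^{q^{s+n}})=x_0y_0^{q^s}-y_0x_0^{q^s}$, whose left-hand factor is nonzero (otherwise $x_0/y_0\in\F_{q^{2n}}\cap\F_{q^{s+n}}=\F_q$), so that $\delta=-x_0^{q^s-q^{s+n}}\cdot\frac{\xi^{q^s}-\xi}{\xi^{q^{s+n}}-\xi}$. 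The prefactor $-x_0^{q^s-q^{s+n}}=-\bigl(x_0^{q^s}\bigr)^{1-q^n}$ has norm $1$, and $\overline{\delta}=-\bigl(\frac{\xi^{q^s}-\xi}{\xi^{q^{s+n}}-\xi}\bigr)^{q^n}$, so $\N_{q^{2n}/q^n}(\delta)=\N_{q^{2n}/q^n}(\overline{\delta})$ for this very $\xi$, with no existence question to settle; Remark \ref{rk:normdelta} then produces $\overline{a}$. One rules out $\xi\in\F_{q^n}$ because in that case $\overline{\delta}=-1$, of norm $1$, which forces kernel dimension at most $1$; the converse implication is again Remark \ref{rk:normdelta}. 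Your norm bookkeeping via $y=\xi^{q^n}$ is harmless, but it cannot substitute for this construction.
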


\begin{proof}
Assume that $\dim_{\F_q}\ker(f_{a,b,s}(x))=2$, i.e. there exist $x_0 \in \F_{q^{2n}}^*$ and $y_0\in \F_{q^{2n}}\setminus\F_q$ such that $x_0/y_0 \notin \F_q$ and
\[ \frac{x_0^{q^s}+\delta x_0^{q^{s+n}}}{x_0}=\frac{y_0^{q^s}+\delta y_0^{q^{s+n}}}{y_0}, \]
which may be rewritten as follows
\[ \delta(y_0 x_0^{q^{s+n}}-x_0y_0^{q^{s+n}})=x_0y_0^{q^s}-y_0x_0^{q^s}. \]
If $y_0 x_0^{q^{s+n}}-x_0y_0^{q^{s+n}}$ would be zero, than $x_0/y_0 \in \F_{q^{2n}}\cap\F_{q^{s+n}}=\F_q$, a contradiction.
Hence,
\[ \delta=\frac{x_0y_0^{q^s}-y_0x_0^{q^s}}{y_0 x_0^{q^{s+n}}-x_0y_0^{q^{s+n}}}, \]
and, since $y_o=\xi x_0$ for some $\xi \in \F_{q^{2n}}\setminus \F_q$, we have
\[ \delta= \frac{1}{-x_0^{q^{s+n}-q^s}} \frac{\xi^{q^s}-\xi}{\xi^{q^{s+n}}-\xi}. \]
By Remark \ref{rk:normdelta}, $\dim_{\F_q}\ker(f_{a,b,s}(x))=2$ if and only if there exists $\overline{a},\overline{b}$ as in the claim such that $\dim_{\F_q}\ker(f_{\overline{a},\overline{b},s}(x))=2$.
If $\xi\in\mathbb{F}_{q^n}$, then $\overline{\delta}=-1$, and hence $\dim_{\F_q}\ker(f_{\overline{a},\overline{b},s}(x))\leq1$.
The claim follows.
\end{proof}

As a consequence of Theorem \ref{th:deltachoice} we get the following result.

\begin{corollary}
There exist $\delta \in \F_{q^{2n}}^*$ for which $\dim_{\F_q}\ker(f_{a,b,s}(x))\leq 1$, with $b=\delta a$, for each $a\in \F_{q^{2n}}^*$ if and only if
\[ \left| \left\{ \N_{q^{2n}/q^n}\left( \frac{\xi^{q^{n+s}}-\xi^{q^n}}{\xi^{q^n}-\xi^{q^s}} \right) \colon \xi \in \F_{q^{2n}}\setminus\F_{q^n} \right\} \right|< q^n-1. \]
\end{corollary}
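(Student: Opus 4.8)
The plan is to translate the corollary into a statement about the image of a norm map and then count. By Remark~\ref{rk:normdelta}, whether ``there is $a\in\F_{q^{2n}}^*$ with $\dim_{\F_q}\ker f_{a,\delta a,s}=2$'' holds depends only on $\N_{q^{2n}/q^n}(\delta)\in\F_{q^n}^*$; call $\nu\in\F_{q^n}^*$ \emph{admissible} when it does. Since $\dim_{\F_q}\ker f_{a,\delta a,s}\le2$ always holds, the left-hand side of the corollary is precisely the negation ``for every $a$, $\dim_{\F_q}\ker\le1$'', i.e.\ it asserts the existence of a non-admissible $\nu\in\F_{q^n}^*$; as $\N_{q^{2n}/q^n}\colon\F_{q^{2n}}^*\to\F_{q^n}^*$ is onto and $|\F_{q^n}^*|=q^n-1$, this is equivalent to the admissible set having fewer than $q^n-1$ elements. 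So it suffices to prove that the admissible set equals
\[
S:=\Bigl\{\N_{q^{2n}/q^n}\bigl(\overline\delta(\xi)\bigr)\colon\xi\in\F_{q^{2n}}\setminus\F_{q^n}\Bigr\},\qquad\overline\delta(\xi):=\frac{\xi^{q^{n+s}}-\xi^{q^n}}{\xi^{q^n}-\xi^{q^s}},
\]
the set appearing on the right of the corollary (with the understanding that $\xi$ ranges over those values for which $\overline\delta(\xi)$ is defined).

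For the inclusion ``admissible $\subseteq S$'', suppose $\dim_{\F_q}\ker f_{a,\delta a,s}=2$. The computation in the proof of Theorem~\ref{th:deltachoice} shows $\delta=x_0^{\,q^s-q^{s+n}}\cdot\frac{\xi-\xi^{q^s}}{\xi^{q^{s+n}}-\xi}$ for a suitable $x_0\in\F_{q^{2n}}^*$ and $\xi\in\F_{q^{2n}}\setminus\F_{q^n}$. Since $q^s-q^{s+n}=-q^s(q^n-1)$ and $\N_{q^{2n}/q^n}(x_0)\in\F_{q^n}^*$ has order dividing $q^n-1$, the factor $x_0^{\,q^s-q^{s+n}}$ has norm $1$, so $\N_{q^{2n}/q^n}(\delta)=\N_{q^{2n}/q^n}\bigl(\frac{\xi-\xi^{q^s}}{\xi^{q^{s+n}}-\xi}\bigr)$. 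A direct substitution using $\xi^{q^{2n}}=\xi$ shows $\frac{\xi-\xi^{q^s}}{\xi^{q^{s+n}}-\xi}=\overline\delta(\xi^{q^n})$, and $\xi^{q^n}\in\F_{q^{2n}}\setminus\F_{q^n}$ because $\xi\notin\F_{q^n}$; hence $\N_{q^{2n}/q^n}(\delta)\in S$.

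For the reverse inclusion ``$S\subseteq$ admissible'', fix $\xi_0\in\F_{q^{2n}}\setminus\F_{q^n}$ with $\overline\delta(\xi_0)$ defined, and set $\eta:=\xi_0^{q^n}\in\F_{q^{2n}}\setminus\F_{q^n}$ and $\overline\delta:=\overline\delta(\xi_0)$. Using $\xi_0^{q^{2n}}=\xi_0$ one gets $\overline\delta=\frac{\eta^{q^s}-\eta}{\eta-\eta^{q^{s+n}}}$, and $\overline\delta\ne-1$ (otherwise $\eta^{q^s}=\eta^{q^{s+n}}$, forcing $\eta\in\F_{q^n}$). Thus $a:=-(1+\overline\delta)^{-1}\in\F_{q^{2n}}^*$ is well defined, and a direct check gives $f_{a,\overline\delta a,s}(1)=1+a(1+\overline\delta)=0$ and $f_{a,\overline\delta a,s}(\eta)=\eta+a(\eta^{q^s}+\overline\delta\eta^{q^{s+n}})=0$; since $\eta\notin\F_q$, the elements $1,\eta$ are $\F_q$-linearly independent, so $\dim_{\F_q}\ker f_{a,\overline\delta a,s}=2$ and $\N_{q^{2n}/q^n}(\overline\delta)$ is admissible. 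As $\xi_0$ was arbitrary, $S$ is contained in the admissible set. Combining the two inclusions, the admissible set equals $S$, and the reduction of the first paragraph finishes the proof.

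The main point requiring care is the reverse inclusion: one has to actually exhibit a two‑dimensional kernel realizing each value of $S$, rather than merely re-deriving the necessary condition of Theorem~\ref{th:deltachoice}, and the explicit choice $a=-(1+\overline\delta)^{-1}$ accomplishes this. One should also dispose of the degenerate $\xi$ for which $\overline\delta(\xi)$ is undefined or equals $0$; these force $\xi$ into a subfield of $\F_{q^2}$ and occur only when $\gcd(s,2n)>1$, so they can be treated separately and do not affect the cardinality comparison in the cases of interest, in particular when $\gcd(s,2n)=1$ (e.g.\ for $s=1$), where $0\notin S$ and the count ``$|S|<q^n-1$'' is exactly the right one.
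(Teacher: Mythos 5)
Your route is essentially the paper's implicit one: the paper states this corollary as an immediate consequence of Remark \ref{rk:normdelta} and Theorem \ref{th:deltachoice}, i.e.\ reduce everything to the norm $\N_{q^{2n}/q^n}(\delta)$ and compare the set of attainable norms with $\F_{q^n}^*$, of size $q^n-1$. Your genuine contribution is to make the inclusion ``$S\subseteq$ admissible'' explicit: the choice $a=-(1+\overline{\delta})^{-1}$, together with the verification that $1$ and $\eta=\xi_0^{q^n}$ lie in the kernel and that $\overline{\delta}\ne-1$, is correct and is exactly the point the paper leaves unproved (the ``if'' part of Theorem \ref{th:deltachoice} only appeals to Remark \ref{rk:normdelta}, so the reverse inclusion needed for the ``only if'' direction of the corollary is not spelled out there). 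So the core of your argument is sound and even fills a gap in the paper's terse derivation.

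Two details need repair. First, in the inclusion ``admissible $\subseteq S$'' you assert that the $\xi$ produced by the proof of Theorem \ref{th:deltachoice} lies in $\F_{q^{2n}}\setminus\F_{q^n}$, whereas that proof only gives $\xi\notin\F_q$; you must add that $\xi\in\F_{q^n}$ would force $\overline{\delta}(\xi^{q^n})=-1$, hence $\N_{q^{2n}/q^n}(\delta)=1$ and $\dim_{\F_q}\ker f_{a,\delta a,s}\le1$, contradicting the assumed kernel dimension $2$ (this is precisely how the paper excludes that case). Second, your description of the degenerate $\xi$ is wrong: the denominator $\xi^{q^n}-\xi^{q^s}$ vanishes for some $\xi\in\F_{q^{2n}}\setminus\F_{q^n}$ (namely $\xi\in\F_{q^2}\setminus\F_q$) exactly when $n$ and $s$ are both odd, i.e.\ when $\gcd(s,2n)=1$ — so undefined values do occur for $s=1$ and $n$ odd, contrary to your claim, although they are harmless since they simply contribute nothing to $S$; on the other hand $\overline{\delta}(\xi)=0$ occurs exactly when $s$ is even, and in that case $0\in S$ while $0$ is not the norm of any $\delta\in\F_{q^{2n}}^*$, so your claimed equality ``admissible $=S$'' should read ``admissible $=S\cap\F_{q^n}^*$'' and the cardinality comparison should be made with that set (a looseness the corollary's own statement shares, but which your closing paragraph does not resolve correctly).
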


Since $\xi \notin \F_{q^n}$, we have that $\xi$ is the root of an irreducible polynomial $X^2-SX-T \in \F_{q^n}[X]$, where $\N_{q^{2n}/q^n}(\xi)=-T$ and $\mathrm{Tr}_{q^{2n}/q^n}(\xi)=S$.
Also, $\{1,\xi\}$ is an $\F_{q^n}$-basis of $\F_{q^{2n}}$ and so there exist $A,B \in \F_{q^n}$ such that $\xi^{q^s}=A+B\xi$.
In the next we give some relations involving $A,B,S$ and $T$.

\begin{proposition}
The following holds:
\begin{enumerate}
    \item $S^{q^s}=2A+BS$;
    \item $-T^{q^s}=A^2+B(AS-BT)$.
\end{enumerate}
In particular, $\mathrm{Tr}_{q^{2n}/q^n}(\xi^{q^s+1})=2BT+AS+BS^2$ and $\mathrm{Tr}_{q^{2n}/q^n}(\xi^{q^s+q^n})=AS-2BT$.
\end{proposition}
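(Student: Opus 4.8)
The plan is to deduce all four identities from the single quadratic relation satisfied by $\xi$ over $\F_{q^n}$, together with the facts that $A,B,S,T\in\F_{q^n}$ and that the $\F_{q^{2n}}/\F_{q^n}$-conjugate $\xi^{q^n}$ is the second root of that same quadratic. Concretely, since $X^2-SX-T$ is the minimal polynomial of $\xi$ over $\F_{q^n}$, I would record at the outset the relations $\xi+\xi^{q^n}=S$, $\xi\,\xi^{q^n}=-T$, $\xi^2=S\xi+T$, and $(\xi^{q^n})^2=S\,\xi^{q^n}+T$; and, applying the field automorphism $x\mapsto x^{q^n}$ of $\F_{q^{2n}}$ to the defining equation $\xi^{q^s}=A+B\xi$ and using $A^{q^n}=A$, $B^{q^n}=B$, also $\xi^{q^{n+s}}=A+B\,\xi^{q^n}$.

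For item (1) I would raise $S=\xi+\xi^{q^n}$ to the power $q^s$, getting $S^{q^s}=\xi^{q^s}+\xi^{q^{n+s}}=(A+B\xi)+(A+B\,\xi^{q^n})$, and then collect the $\xi$-terms via $\xi+\xi^{q^n}=S$. For item (2) I would raise the norm relation $-T=\xi\,\xi^{q^n}$ to the power $q^s$, obtaining $-T^{q^s}=(A+B\xi)(A+B\,\xi^{q^n})$, expand, and substitute $\xi+\xi^{q^n}=S$ and $\xi\,\xi^{q^n}=-T$ to land on $A^2+ABS-B^2T=A^2+B(AS-BT)$.

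For the two trace formulas, the plan is first to reduce $\xi^{q^s+1}$ and $\xi^{q^s+q^n}$ to $\F_{q^n}$-linear combinations of $1$ and $\xi$ (resp. of $1$ and $\xi^{q^n}$) by applying $\xi^2=S\xi+T$ and $\xi\,\xi^{q^n}=-T$: one finds $\xi^{q^s+1}=(A+B\xi)\xi=BT+(A+BS)\xi$ and $\xi^{q^s+q^n}=(A+B\xi)\xi^{q^n}=-BT+A\,\xi^{q^n}$. Then, taking $q^n$-conjugates of these expressions — which, since $A,B,S,T$ are $q^n$-invariant and $\xi^{q^n}$ obeys the same quadratic, simply amounts to swapping $\xi$ and $\xi^{q^n}$ — and adding, the two traces collapse to $2BT+(A+BS)S=2BT+AS+BS^2$ and $-2BT+A(\xi+\xi^{q^n})=AS-2BT$, as claimed.

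I do not anticipate a genuine obstacle here: the whole proposition is a bookkeeping exercise with the quadratic relation. The only two points that deserve attention are that the $q^n$-Frobenius fixes $A$ and $B$, so that it sends $\xi^{q^s}=A+B\xi$ to $A+B\,\xi^{q^n}$ and not to an expression involving $\xi^{q^{2s}}$, and that $\xi^{q^n}$ satisfies the very same quadratic as $\xi$, which is exactly what makes the symmetrization in the trace computations work cleanly.
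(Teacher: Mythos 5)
Your proposal is correct and uses essentially the same bookkeeping as the paper: exploit that $\xi$ and $\xi^{q^n}$ are the two roots of $X^2-SX-T$ with $A,B,S,T\in\F_{q^n}$, and apply Frobenius to the relation $\xi^{q^s}=A+B\xi$. The only cosmetic difference is that you obtain identity (1) directly by raising $S=\xi+\xi^{q^n}$ to the $q^s$-th power, whereas the paper extracts it from the vanishing of the $\xi$-coefficient in $\mathrm{Tr}_{q^{2n}/q^n}(\xi^{q^s+q^n})$; both amount to the same computation.
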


\begin{proof}
As
\[ \xi^{q^s+q^n}=(A+B\xi)(S-\xi)=AS-BT-A\xi \]
and
\[ \xi^{1+q^{n+s}}=-BT+(S^{q^s}-A-BS)\xi, \]
we have that
\[ \mathrm{Tr}_{q^{2n}/q^n}(\xi^{q^s+q^n})=AS-2BT+(S^{q^s}-2A-BS)\xi. \]
Since $\mathrm{Tr}_{q^{2n}/q^n}(\xi^{q^s+q^n})\in \F_{q^n}$, we get the first relation.
Also,
\[ -T^{q^s}=\N_{q^{2n}/q^n}(\xi^{q^s})=A^2+ABS-B^2T, \]
i.e. the second relation.
\end{proof}

Let $\alpha \in \F_{q^n}^*$ with $\alpha \neq 1$. Then
\[ \N_{q^{2n}/q^n}\left( \frac{\xi^{q^{n+s}}-\xi^{q^n}}{\xi^{q^n}-\xi^{q^s}} \right)=\frac{\xi^{q^n+1}+\xi^{q^s+q^{n+s}}-(\xi^{1+q^{n+s}}+\xi^{q^s+q^n})}{\xi^{q^n+1}+\xi^{q^s+q^{n+s}}-(\xi^{q^n+q^{n+s}}+\xi^{q^s+1})}=\alpha, \]
which can be written as
\[ (1-\alpha)(T+T^{q^s})-\alpha S^{q^s+1}+(1+\alpha)(AS-2BT)=0. \]

Hence, we have the following result.

\begin{theorem}\label{th:main}
Let $\alpha \in \F_{q^n}^*$ with $\alpha \ne 1$ and $s$ a positive integer with $\gcd(s,n)=1$. If there exist $T,S,A,B \in \F_{q^n}$ such that
\begin{enumerate}
    \item $(1-\alpha)(T+T^{q^s})-\alpha S^{q^s+1}+(1+\alpha)(AS-2BT)=0$;
    \item $X^2-SX-T \in \F_{q^n}[X]$ is irreducible over $\F_{q^n}$;
    \item $S^{q^s}=2A+BS$;
    \item $-T^{q^s}=A^2+B(AS-BT)$,
\end{enumerate}
then for every $\delta\in \F_{q^{2n}}$, with $\N_{q^{2n}/q^n}(\delta)=\alpha$, there exists $a\in\mathbb{F}_{q^{2n}}^*$ such that $\dim_{\mathbb{F}_q}\ker(f_{a,b,s}(x))=2$, where $b=\delta a$.
\end{theorem}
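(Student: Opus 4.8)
The plan is to unwind the chain of equivalences built up in the preceding remarks and proposition, so that Theorem~\ref{th:main} reduces to a purely formal verification. By Remark~\ref{rk:normdelta}, the existence of $a$ with $\dim_{\F_q}\ker(f_{a,b,s})=2$ for \emph{some} $\delta$ with $\N_{q^{2n}/q^n}(\delta)=\alpha$ is equivalent to its existence for \emph{every} such $\delta$; so it suffices to exhibit one admissible $\delta$. By Theorem~\ref{th:deltachoice}, such a $\delta$ (up to the norm-preserving twist) can be taken of the form $\overline{\delta}=(\xi^{q^{s+n}}-\xi^{q^n})/(\xi^{q^n}-\xi^{q^s})$ for a suitable $\xi\in\F_{q^{2n}}\setminus\F_{q^n}$, and the condition $\dim_{\F_q}\ker(f_{\overline a,\overline b,s})=2$ holds precisely when $\N_{q^{2n}/q^n}(\overline\delta)=\alpha$ for the prescribed $\alpha$. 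So the whole statement collapses to: \emph{given $\alpha\in\F_{q^n}^*\setminus\{1\}$, produce $\xi\in\F_{q^{2n}}\setminus\F_{q^n}$ with $\N_{q^{2n}/q^n}\big((\xi^{q^{s+n}}-\xi^{q^n})/(\xi^{q^n}-\xi^{q^s})\big)=\alpha$}.

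First I would encode $\xi$ by its minimal polynomial over $\F_{q^n}$: since $\xi\notin\F_{q^n}$, it satisfies $X^2-SX-T=0$ with $S=\mathrm{Tr}_{q^{2n}/q^n}(\xi)$, $T=-\N_{q^{2n}/q^n}(\xi)$, and this polynomial is irreducible over $\F_{q^n}$ (condition~2). Then $\{1,\xi\}$ is an $\F_{q^n}$-basis of $\F_{q^{2n}}$, so $\xi^{q^s}=A+B\xi$ for unique $A,B\in\F_{q^n}$. Applying the Frobenius $x\mapsto x^{q^s}$ to $\xi^2=S\xi+T$ and to the trace/norm relations gives exactly the two compatibility identities $S^{q^s}=2A+BS$ and $-T^{q^s}=A^2+B(AS-BT)$ (conditions~3 and~4); these are forced, not extra hypotheses — they simply say that $A,B$ are determined consistently by $S,T$. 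Next I would compute $\N_{q^{2n}/q^n}(\overline\delta)$ explicitly. Expanding the numerator and denominator of $\overline\delta\cdot\overline\delta^{q^n}$ in terms of the monomials $\xi^{1+q^n}$, $\xi^{q^s+q^{n+s}}$, $\xi^{1+q^{n+s}}$, $\xi^{q^s+q^n}$, and rewriting each of these in the basis $\{1,\xi\}$ using the Proposition, one finds (as already displayed just before the theorem) that $\N_{q^{2n}/q^n}(\overline\delta)=\alpha$ is equivalent to the single $\F_{q^n}$-linear-looking relation
\[
(1-\alpha)(T+T^{q^s})-\alpha S^{q^s+1}+(1+\alpha)(AS-2BT)=0,
\]
which is condition~1. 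So the implication of Theorem~\ref{th:main} follows: conditions~1--4 on $(T,S,A,B)$ exactly say that the element $\xi$ with minimal polynomial $X^2-SX-T$ yields $\overline\delta$ of norm $\alpha$, and then Theorem~\ref{th:deltachoice} and Remark~\ref{rk:normdelta} give the desired $a,b$.

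The bookkeeping obstacle — and the only real work — is the explicit reduction of the four ``mixed Frobenius'' monomials $\xi^{a+q^n b}$ to the basis $\{1,\xi\}$ and the verification that the numerator and denominator of $\N_{q^{2n}/q^n}(\overline\delta)$, after clearing the $\xi$-components using conditions~3--4, collapse to the clean expression above with no leftover $\xi$ term; one has to be careful that the $\xi$-coefficient indeed vanishes identically (this is where the relation $S^{q^s}=2A+BS$ gets used, as in the Proposition's proof) and that the resulting scalar identity is symmetric enough to be rearranged into condition~1. Once that computation is in hand the proof is essentially immediate: I would write ``\emph{Proof.} By the Proposition, $\N_{q^{2n}/q^n}(\overline\delta)=\alpha$ if and only if~(1) holds, where $\overline\delta$ is built from $\xi$ with minimal polynomial $X^2-SX-T$; conditions~(3) and~(4) guarantee that such a $\xi$ exists with $\xi^{q^s}=A+B\xi$, and condition~(2) guarantees $\xi\notin\F_{q^n}$. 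The conclusion then follows from Theorem~\ref{th:deltachoice} and Remark~\ref{rk:normdelta}.'' — filling in the monomial expansions as the body of the argument.
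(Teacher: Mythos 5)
Your proposal is correct and follows essentially the same route as the paper, which proves Theorem~\ref{th:main} implicitly by assembling Remark~\ref{rk:normdelta}, Theorem~\ref{th:deltachoice}, the Proposition relating $A,B,S,T$, and the displayed computation showing that $\N_{q^{2n}/q^n}(\overline{\delta})=\alpha$ is equivalent to condition~1. The only imprecision is the phrase that kernel dimension $2$ ``holds precisely when $\N_{q^{2n}/q^n}(\overline{\delta})=\alpha$'' (what is actually used is that any $\overline{\delta}$ of the special form admits some $\overline{a}$ giving kernel dimension $2$, and that same-norm $\delta$'s behave alike), but this does not affect the validity of the reduction.
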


In the rest of this section $q=p^h$ with $p$ prime.
We will show that the existence of the parameters $T,S,A,B\in\mathbb{F}_{q^n}$ satisfying the hypothesis of Theorem \ref{th:main} is equivalent to the existence of a suitable affine $\mathbb{F}_{q^n}$-rational point of the algebraic plane curve with equation \eqref{eq:curveqodd} or \eqref{eq:curveqeven}, for $q$ odd or $q$ even respectively.

\subsection{Proof of Theorem \ref{th:mainmain} for $q$ odd}\label{sec:qodd}

Denote by $\Delta=S^2+4T$.
By 3. and 4. of Theorem \ref{th:main}, we get
\[ B=\epsilon\Delta^{\frac{q^s-1}{2}},\quad A=\frac{1}{2}(S^{q^s}-\epsilon S \Delta^{\frac{q^s-1}{2}}), \]
where $\epsilon\in\{1,-1\}$.
Hence we get $AS-2BT=\frac{1}{2}\epsilon\Delta^{\frac{q^s-1}{2}}-\frac{1}{2}S^{q^s+1}$. Replacing such values in 1. of Theorem \ref{th:main}, we get
\begin{equation}\label{eq:qodd1}
2(T+T^{q^s})(1-\alpha)+(1-\alpha)S^{q^s+1}=\epsilon(\alpha+1)\Delta^{\frac{q^s+1}2}.
\end{equation}
Also, the irreducibility of $X^2-SX-T$ over $\F_{q^n}$ is equivalent to the existence of a nonsquare element $\eta$ of $\F_{q^n}$ and a nonzero element $Z$ of $\F_{q^n}$ such that $\Delta=\eta Z^2$.
Therefore, \eqref{eq:qodd1} becomes
\[ 2(T+T^{q^s})+S^{q^s+1}=\beta \eta^{\frac{q^s+1}2} Z^{q^s+1}, \]
where $\beta=\epsilon\frac{\alpha+1}{1-\alpha}$.
Using that $T=\frac{\eta Z^2-S^2}{4}$, we get the following equation:
\begin{equation}\label{eq:curveqodd} -(S^{q^s}-S)^2+\eta Z^2 + \eta^{q^s} Z^{2q^s} - 2\beta\eta^{\frac{q^s+1}{2}}Z^{q^s+1}=0.
\end{equation}

\begin{theorem}\label{th:qodd}
Let $\beta\in\mathbb{F}_{q^n}\setminus\{1,-1\}$ and $\eta$ a non-square in $\mathbb{F}_{q^n}$.
The plane curve $\mathcal{C}$ with affine equation \eqref{eq:curveqodd} is absolutely irreducible and has genus $g(\mathcal{C})=q^{2s}-q^s-1$.
\end{theorem}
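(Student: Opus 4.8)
The plan is to exhibit $\mathcal{C}$ as a tower of two well-understood coverings of the projective line, and to apply the genus formulas of Theorem \ref{th:kummer} and Theorem \ref{th:artinschreier} (together with the Hurwitz formula of Theorem \ref{th:hurwitz}) to compute $g(\mathcal{C})$; absolute irreducibility will follow along the way from the fact that Kummer and generalized Artin--Schreier covers of an absolutely irreducible curve are again absolutely irreducible, once the hypotheses of those theorems are checked. The key structural observation is that \eqref{eq:curveqodd} is quadratic in the ``$S$-part'': setting $U=S^{q^s}-S$, equation \eqref{eq:curveqodd} reads $U^2=\eta Z^2+\eta^{q^s}Z^{2q^s}-2\beta\eta^{(q^s+1)/2}Z^{q^s+1}$, i.e. $U^2=\big(\eta^{1/2}Z-\gamma_1 \eta^{q^s/2}Z^{q^s}\big)\big(\eta^{1/2}Z-\gamma_2\eta^{q^s/2}Z^{q^s}\big)$ where $\gamma_1,\gamma_2$ are the two roots of $\gamma^2-2\beta\gamma+1=0$ (these lie in $\mathbb{F}_{q^n}$ or a quadratic extension; since $\beta\neq\pm1$ they are distinct and nonzero). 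So I would first study the auxiliary plane curve $\mathcal{D}:U^2=f(Z)$, a Kummer cover (degree $2$, coprime to $p$ since $q$ is odd) of the $Z$-line $\mathbb{P}^1$, where $f(Z)=\eta Z^2+\eta^{q^s}Z^{2q^s}-2\beta\eta^{(q^s+1)/2}Z^{q^s+1}$. Then I would recover $\mathcal{C}$ from $\mathcal{D}$ as the curve $S^{q^s}-S=U$ over $\mathcal{D}$, which is a generalized Artin--Schreier cover with $L(T)=T^{q^s}-T$, a separable $p$-polynomial of degree $\bar q=q^s$ with all roots in $\mathbb{F}_{q}\subseteq\mathbb{F}_{q^n}$.

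First I would analyze the Kummer step $\mathcal{D}\to\mathbb{P}^1_Z$. I need the ramified places: these are the places of $\mathbb{P}^1_Z$ where $v_P(f(Z))$ is odd. Away from $Z=0$ and $Z=\infty$, $f(Z)=Z^2\,g(Z)$ with $g(Z)=\eta+\eta^{q^s}Z^{2q^s-2}-2\beta\eta^{(q^s+1)/2}Z^{q^s-1}$ a polynomial of degree $2q^s-2$; I would check that $g$ is separable and that $g(0)\neq0$ (using $\beta\neq\pm1$ to rule out a double root, and $\eta\neq0$ for $g(0)=\eta\neq0$), so $g$ contributes $2q^s-2$ simple zeros, each giving an odd-order (order $1$) zero of $f$ and hence a ramified place. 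At $Z=0$, $v(f)=2$ is even, so $Z=0$ is unramified. At $Z=\infty$, $\deg f=2q^s$ is even, so $Z=\infty$ is unramified. Hence $\mathcal{D}$ has exactly $2q^s-2$ ramified places, all tame, and the Kummer genus formula of Theorem \ref{th:kummer} gives $g(\mathcal{D})=1+2(0-1)+\tfrac12(2q^s-2)=q^s-2$. I should double-check the separability of $g$ and the count of distinct roots carefully — that is the place where a degenerate configuration of $\beta$ or characteristic might spoil the bound, and I expect to need the hypotheses $\beta\neq\pm1$ and possibly $p\nmid(q^s-1)$-type conditions here; a short resultant/discriminant computation of $g$ should settle it.

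Next I would analyze the Artin--Schreier step $\mathcal{C}\to\mathcal{D}$, $(S,Z,U)\mapsto(Z,U)$, defined by $S^{q^s}-S=U$. Here $f(x,y)=u$ (the coordinate function $u$ on $\mathcal{D}$), and for each place $P$ of $\mathcal{D}$ I must find $\omega$ with $v_P(u-L(\omega))\geq0$ or $=-m$, $p\nmid m$, to compute $m_P$. Since $L(T)=T^{q^s}-T$ has degree $q^s$: at a place $P$ where $u$ has no pole, take $\omega=0$, $m_P=-1$. The poles of $u$ on $\mathcal{D}$ lie over $Z=\infty$; over $Z=\infty$ there are two places (it was unramified in the Kummer step), and near such a place $u\sim$ (something of pole order $q^s$, since $U=\eta^{1/2}Z\mp\dots$ has a simple pole in $Z$ but we must track it on $\mathcal{D}$ where $Z$ itself has a pole). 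I would compute the pole order of $u$ at each place over $Z=\infty$; by the factorization $u^2=(\dots Z-\dots Z^{q^s})(\dots)$ the function $u$ behaves like $Z^{q^s}$ near infinity, so $v_P(u)=-q^s$ at the (two) places over $Z=\infty$, and since $\gcd(q^s,p)\neq1$ this is not yet in standard form — I must complete the $p$-th power: write $u=\eta^{q^s/2}Z^{q^s}\cdot(\text{unit})+(\text{lower order})$ and subtract $L(\omega)$ with $\omega$ chosen so that the $Z^{q^s}$-term cancels, reducing the pole order to something prime to $p$, i.e. to $m_P=1$ (the pole order of $Z$ itself). So each of the two places over $Z=\infty$ has $m_P=1$, all other places have $m_P=-1$, and the Artin--Schreier genus formula of Theorem \ref{th:artinschreier} gives $g(\mathcal{C})=q^s\cdot g(\mathcal{D})+\tfrac{q^s-1}{2}\big(-2+(1+1)+(1+1)\big)=q^s(q^s-2)+\tfrac{q^s-1}{2}\cdot 2=q^{2s}-2q^s+q^s-1=q^{2s}-q^s-1$, as claimed. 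Absolute irreducibility of $\mathcal{C}$ follows since $\mathcal{D}$ is absolutely irreducible (Kummer cover of $\mathbb{P}^1$ with the valuation hypothesis met at any of the $2q^s-2$ ramified places) and $\mathcal{C}$ is a generalized Artin--Schreier cover of $\mathcal{D}$ (there is a place with $m_Q>0$), so Theorem \ref{th:kummer} and Theorem \ref{th:artinschreier} apply.

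The main obstacle I anticipate is the local analysis of the function $u$ at the places over $Z=\infty$ on $\mathcal{D}$: I must correctly identify that there are two such places (not one), determine the exact pole order of $u$ there, and perform the $p$-power completion to bring $u-L(\omega)$ into the normalized form with pole order prime to $p$ equal to $1$; an off-by-a-factor-of-$q^s$ error here would change the final genus. A secondary technical point is verifying that the degree-$(2q^s-2)$ polynomial $g(Z)$ is separable with $g(0)\neq0$ for all admissible $\beta$ and $\eta$ — this needs its discriminant to be nonzero, which should reduce to $\beta^2\neq1$ together with char-$p$ bookkeeping, and is exactly the kind of place where the excluded small cases ($q=3$, $q=2$) of Theorem \ref{th:mainmain} might already be visible.
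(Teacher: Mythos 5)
Your proposal is correct and follows essentially the same route as the paper: the paper also writes the equation as $U^2=G(Z)$ with $U=S^{q^s}-S$, computes the genus $q^s-2$ of this Kummer double cover of $\mathbb{P}^1_Z$ (using $\beta\neq\pm1$ to see that $0$ is the only multiple root of $G$), and then treats $\mathcal{C}$ as a degree-$q^s$ generalized Artin--Schreier cover ramified exactly at the two places over $Z=\infty$ with $m_P=1$, yielding $g(\mathcal{C})=q^{2s}-q^s-1$. The one step you flag but do not carry out --- that after subtracting $\omega^{q^s}-\omega$ with $\omega=\pm\sqrt{\eta}\,z$ the residual pole order is exactly $1$ --- is settled in the paper by the explicit Laurent expansion $u=\sqrt{\eta^{q^s}}z^{q^s}-\beta\sqrt{\eta}\,z+\cdots$, where the surviving coefficient is $(1-\beta)\sqrt{\eta}\neq0$ precisely because $\beta\neq1$.
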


\begin{proof}
Let $G(Z)=\eta Z^2+\eta^{q^s} Z^{2q^s}-2\beta\eta^{\frac{q^s+1}{2}}Z^{q^s+1}\in\mathbb{F}_{q^n}[Z]$, and let $\mathcal{C}_1$ be the plane curve with affine equation $F_1(U,Z)=0$, where $F_1(U,Z)=U^2-G(Z)$.
By direct computation using the assumption $\beta\ne\pm1$ follows that $0$ is the unique multiple root of $G(Z)$, with multiplicity $2$; the other $2q^s-2$ roots $\lambda_1,\ldots,\lambda_{2q^s-2}$ of $G(Z)$ are simple.
Then $G(Z)$ is not a square in $\mathbb{K}[Z]$, whence $F_1(U,Z)$ is irreducible over $\mathbb{K}=\overline{\mathbb{F}}_{q^n}$, i.e. $\mathcal{C}_1$ is absolutely irreducible.

The genus of the quadratic Kummer cover $\mathcal{C}_1$ of the projective line is computed as follows.
Let $z,u$ be the coordinate functions of $\mathcal{C}_1$, so that the function field of $\mathcal{C}_1$ is $\mathbb{K}(\mathcal{C}_1)=\mathbb{K}(z,u)$.
The valuation of $G(z)$ at the zero of $z-\lambda_i$ in $\mathbb{K}(\mathbb{P}_z^1)=\mathbb{K}(z)$ is $1$, for every $i=1,\ldots,2q^s-2$. The valuation of $G(z)$ at any other place of $\mathbb{P}_z^1$ is even; namely, it is $2$ at the zero of $z$, $-2q^s$ at the pole of $z$, and $0$ at the zero of $z-\mu$ whenever $G(\mu)\ne0$.
By Theorem \ref{th:kummer}, the only ramified places in $\mathcal{C}_1\to\mathbb{P}_z^1$ are the zeros of $z-\lambda_1,\ldots,z-\lambda_{2q^s-2}$; hence,
\[g(\mathcal{C}_1)= 1+2(g(\mathbb{P}_z^1)-1)+\frac{1}{2}(2q^s-2)(2-1)=q^s-2.\]


Since $\mathcal{C}$ has equation $(S^{q^s}-S)^2=G(Z)$, it is enough to show that $\mathcal{C}$ is an Artin-Schreier cover of $\mathcal{C}_1$, with covering $\varphi:\mathcal{C}\to\mathcal{C}_1$, $(Z,S)\mapsto(Z,U=S^{q^s}-S)$, of degree $q^s$.
To this aim, consider the two poles $P_{\infty}$ and $Q_\infty$ of $u$ on $\mathcal{C}_1$; the rational function $1/z$ is a local parameter at each of them, i.e. $v_{P_{\infty}}(1/z)=v_{Q_{\infty}}(1/z)=1$.
By direct computation, the Laurent series of $u$ at $P_\infty$ with respect to $1/z$ is \[u=\sqrt{\eta^{q^s}}\left(1/z\right)^{-q^s}-\beta\sqrt{\eta}\left(1/z\right)^{-1}+\frac{\eta-\beta^2\eta}{2\sqrt{\eta^{q^s}}}(1/z)^{q^s-2}+w,\]
for some $w\in\mathbb{K}(\mathcal{C}_1)$ with $v_{P_\infty}(w)>q^s-2$.
By choosing $\omega_{P_{\infty}}=\sqrt{\eta}z$ one has that $u-(\omega_{P_{\infty}}^{q^s}-\omega_{P_{\infty}})$ has valuation $-1$ at $P_\infty$, because $\beta\ne1$.
Analogously, there exists $\omega_{Q_\infty}$ such that $v_{Q_{\infty}}(u-(\omega_{Q_{\infty}}^{q^s}-\omega_{Q_{\infty}}))=-1$.
Hence, by Theorem \ref{th:artinschreier}, $\mathcal{C}$ is an absolutely irreducible Artin-Schreier extension $\mathcal{C}_1$ of degree $q^s$.

The ramified places in $\mathcal{C}\to\mathcal{C}_1$ are exactly $P_\infty$ and $Q_\infty$, which are totally ramified; any other place of $\mathcal{C}_1$ is unramified under $\mathcal{C}$.
Therefore,
\[g(\mathcal{C})=q^s\cdot g(\mathcal{C}_1)+\frac{q^s-1}{2}\left(-2+2\cdot2\right)=q^{2s}-q^s-1.\]
\end{proof}

\begin{proposition}\label{prop:HWqodd}
Let $\mathcal{C}$ be the plane curve with affine equation \eqref{eq:curveqodd}.
If 
\[
n\geq\begin{cases} 4s+1 & \textrm{if }\,q>3, \\
4s+2 & \textrm{if }\,q=3,s>1,\\
5 & \textrm{if }\,q=3,s=1;
\end{cases}
\]
then there exists an $\mathbb{F}_{q^n}$-rational affine point $(\bar{z},\bar{s})$ of $\mathcal{C}$ such that $\bar{t}=\frac{\eta \bar{z}^2-\bar{s}^2}{4}$ is different from zero.
\end{proposition}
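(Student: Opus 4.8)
The plan is to reduce the statement to the existence of a single $\mathbb{F}_{q^n}$-rational place of $\mathcal{C}$, and then to obtain such a place from the Hasse-Weil bound, using that $g(\mathcal{C})=q^{2s}-q^s-1$ by Theorem~\ref{th:qodd}. The key elementary observation is that, since $\eta$ is a non-square in $\mathbb{F}_{q^n}$, an $\mathbb{F}_{q^n}$-rational affine point $(\bar{z},\bar{s})$ of $\mathcal{C}$ satisfies $\bar{t}=\frac{\eta\bar{z}^2-\bar{s}^2}{4}=0$ if and only if $(\bar{z},\bar{s})=(0,0)$: indeed, $\eta\bar{z}^2=\bar{s}^2$ with $\bar{z}\neq0$ would make $\eta=(\bar{s}/\bar{z})^2$ a square. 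Hence it is enough to exhibit an $\mathbb{F}_{q^n}$-rational affine point of $\mathcal{C}$ other than the origin, and for that it suffices to produce an $\mathbb{F}_{q^n}$-rational place of $\mathcal{C}$ whose center is neither at infinity nor equal to $(0,0)$.

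To rule out these two kinds of center, I would examine the leading and lowest forms of the polynomial on the left-hand side of \eqref{eq:curveqodd}. Its degree-$2q^s$ homogeneous component equals $-S^{2q^s}+\eta^{q^s}Z^{2q^s}=-(S^2-\eta Z^2)^{q^s}$ (Frobenius in characteristic $p$), so the points at infinity of $\mathcal{C}$ are $(\sqrt{\eta}\colon1\colon0)$ and $(-\sqrt{\eta}\colon1\colon0)$, which are conjugate over $\mathbb{F}_{q^n}$ because $\eta$ is a non-square; thus $\mathcal{C}$ has no $\mathbb{F}_{q^n}$-rational point, and a fortiori no $\mathbb{F}_{q^n}$-rational place, at infinity. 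At the origin the lowest-degree component is $-S^2+\eta Z^2=-(S-\sqrt{\eta}Z)(S+\sqrt{\eta}Z)$, a product of two distinct linear forms, so $(0,0)$ is an ordinary node of $\mathcal{C}$ whose two tangent directions $S=\pm\sqrt{\eta}Z$ are conjugate over $\mathbb{F}_{q^n}$; hence the $q^n$-power Frobenius interchanges the two branches of $\mathcal{C}$ at $(0,0)$, and no $\mathbb{F}_{q^n}$-rational place of $\mathcal{C}$ is centered there. Therefore every $\mathbb{F}_{q^n}$-rational place of $\mathcal{C}$ is centered at some affine point $(\bar{z},\bar{s})\neq(0,0)$, which by the previous paragraph has $\bar{t}\neq0$.

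It remains to show that $\mathcal{C}$ has at least one $\mathbb{F}_{q^n}$-rational place. By Theorem~\ref{th:qodd} the curve $\mathcal{C}$ is absolutely irreducible and defined over $\mathbb{F}_{q^n}$, of genus $g=q^{2s}-q^s-1$; by the Hasse-Weil bound (Theorem~\ref{th:hasseweil}) its number $N$ of $\mathbb{F}_{q^n}$-rational places satisfies $N\geq q^n+1-2g\sqrt{q^n}$, so it suffices to check that $q^n>(2g)^2=4(q^{2s}-q^s-1)^2$, which gives $2g<\sqrt{q^n}$ and hence $N>q^n+1-q^n=1$. If $q\geq5$ and $n\geq4s+1$, then $q^n\geq q\cdot q^{4s}>4q^{4s}>4(q^{2s}-q^s-1)^2$; if $q=3$, $s\geq2$ and $n\geq4s+2$, then $q^n\geq9\cdot3^{4s}>4\cdot3^{4s}>4(q^{2s}-q^s-1)^2$; and if $q=3$, $s=1$ and $n\geq5$, then $q^n\geq3^5=243>100=4(q^{2s}-q^s-1)^2$. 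In each case $N\geq1$, which completes the proof.

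The step I anticipate to be most delicate is the last numerical check in the $q=3$ cases. The crude estimate $(2g)^2<4q^{4s}$ settles the matter when $q\geq5$ (since $5>4$) but fails when $q=3$, so for $q=3$ one must either allow $n=4s+2$ or, when $s=1$, use the explicit value $2g=10$ together with $3^5=243>100$; it is precisely the lower-order term $-q^s-1$ in $g$ that lets the threshold be $n\geq5$ rather than $n\geq6$ when $q=3$, $s=1$. By contrast, the geometric input — locating the two conjugate points at infinity and recognizing $(0,0)$ as a node with conjugate tangent directions — is routine once the leading and lowest forms of \eqref{eq:curveqodd} have been written out.
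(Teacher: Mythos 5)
Your proof is correct and follows essentially the same route as the paper: absolute irreducibility and the genus $q^{2s}-q^s-1$ from Theorem \ref{th:qodd}, then the Hasse--Weil bound under the stated hypotheses on $n$. The only difference is the treatment of the exceptional places: the paper counts at most four places that are poles of $z$, $s$, $t$ or zeros of $t$ and requires $N_{q^n}>4$, whereas you show those places are never $\mathbb{F}_{q^n}$-rational (the two points at infinity are conjugate over $\mathbb{F}_{q^n}$, and the Frobenius swaps the two branches at the node $(0,0)$ because $\eta$ is a non-square), so that $N_{q^n}\geq 1$ already suffices --- a slightly sharper but equivalent finish.
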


\begin{proof}
By Theorem \ref{th:qodd}, $\mathcal{C}$ is absolutely irreducible with genus $g(\mathcal{C})=q^{2s}-q^s-1$.
By Theorem \ref{th:hasseweil}, the number $N_{q^n}$ of $\mathbb{F}_{q^n}$-rational places of $\mathcal{C}$ satisfies
\[N_{q^n}\geq q^n+1-2(q^{2s}-q^s-1)\sqrt{q^n}.\]
From the proof of Theorem \ref{th:qodd} the following facts follow.
\begin{itemize}
    \item $z$ has exactly $2$ poles on $\mathcal{C}$, which coincide with the poles of $s$, namely the places lying over $P_\infty$ and $Q_\infty$.
    \item Using the equation of $\mathcal{C}$, the zeros of $t=\frac{\eta z^2-s^2}{4}=\frac{(\sqrt{\eta}z-s)(\sqrt{\eta}z+s)}{4}$ on $\mathcal{C}$ are also zeros of $(\beta-1)z^{q^s+1}$ and hence of $z$ as $\beta\ne1$; thus, they are the common zeros of $z$ and $s$ on $\mathcal{C}$, and there are exactly $2$ of them.
\end{itemize}
Altogether, there are $4$ places of $\mathcal{C}$ which are either poles of $s$ or $z$ or $t$, or zeros of $t$.
The assumption on $n$ implies that
\[ q^n+1-2(q^{2s}-q^s-1)\sqrt{q^n}>4, \]
whence $N_{q^n}>4$.
Then there exists an $\mathbb{F}_{q^n}$-rational place $P$ which is not a pole of $z$, $s$, or $t$, and is not a zero of $t$. Then the point $(\bar{z},\bar{s})=(z(P),s(P))$ yields the claim.
\end{proof}

From Theorem \ref{th:main} and Proposition \ref{prop:HWqodd} follows Corollary \ref{cor:mainmain_qodd}, which is our main result Theorem \ref{th:mainmain} when $q$ is odd.

\begin{corollary}\label{cor:mainmain_qodd}
Let $q$ be an odd prime power, $s\geq1$ be such that $\gcd(s,n)=1$.
Suppose that
\[
n\geq\begin{cases} 4s+2 & \textrm{if}\; q=3\textrm{ and }s>1; \\ 4s+1 & \textrm{otherwise}. \end{cases}
\]
Then for every $\delta\in\mathbb{F}_{q^{2n}}$ satisfying $\mathrm{N}_{q^{2n}/q^n}(\delta)\notin\{0,1\}$ there exists $a\in\mathbb{F}_{q^{2n}}^*$ such that $\dim_{\mathbb{F}_q}\ker(f_{a,b})=2$, where $b=\delta a$.
\end{corollary}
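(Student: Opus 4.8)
The plan is to reduce the corollary to Theorem~\ref{th:main} by producing, for the given $\delta$, parameters $T,S,A,B\in\F_{q^n}$ meeting items~1--4 of that theorem. So I set $\alpha:=\N_{q^{2n}/q^n}(\delta)$; since $\delta\neq0$ and $\N_{q^{2n}/q^n}$ takes values in $\F_{q^n}$, the hypothesis $\alpha\notin\{0,1\}$ says exactly that $\alpha\in\F_{q^n}^*\setminus\{1\}$. I then put $\beta:=\frac{\alpha+1}{1-\alpha}\in\F_{q^n}$ and fix a non-square $\eta\in\F_{q^n}$. Here is where the oddness of $q$ first enters: $\beta=1$ is equivalent to $\alpha=0$, and $\beta=-1$ is equivalent to $2=0$, so $\alpha\notin\{0,1\}$ guarantees $\beta\in\F_{q^n}\setminus\{1,-1\}$, which is precisely what Theorem~\ref{th:qodd} and Proposition~\ref{prop:HWqodd} require of $\beta$.

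Next I would feed this $\beta$ and $\eta$ into Proposition~\ref{prop:HWqodd}: the bound on $n$ assumed in the corollary coincides with the hypothesis of that proposition, so the plane curve $\mathcal{C}$ of \eqref{eq:curveqodd} carries an $\F_{q^n}$-rational affine point $(\bar z,\bar s)$ with $\bar t:=\tfrac14(\eta\bar z^2-\bar s^2)\neq0$. I then set $S:=\bar s$, $T:=\bar t$, $\Delta:=S^2+4T=\eta\bar z^2$, $B:=\Delta^{(q^s-1)/2}$ and $A:=\tfrac12(S^{q^s}-S\Delta^{(q^s-1)/2})$ — these are the assignments of Section~\ref{sec:qodd} with $\epsilon=1$ — and verify items~1--4 of Theorem~\ref{th:main}. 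Item~3 is immediate from the definition of $A$. Item~4 follows by expanding $A^2+B(AS-BT)$ and using $B^2\Delta=\Delta^{q^s}=S^{2q^s}+4T^{q^s}$, i.e. it is just the computation preceding Theorem~\ref{th:qodd} read backwards and is purely formal. Item~1 is equation \eqref{eq:qodd1}, equivalently \eqref{eq:curveqodd} after substituting $T$ and $AS-2BT$, so it holds because $(\bar z,\bar s)\in\mathcal{C}$. The point that is not automatic is item~2, namely that $X^2-SX-T$ is irreducible over $\F_{q^n}$, i.e. that $\Delta=\eta\bar z^2$ is a non-square. For this I would first argue $\bar z\neq0$: if $\bar z=0$, then \eqref{eq:curveqodd} forces $\bar s^{q^s}=\bar s$, hence $\bar s\in\F_{q^s}\cap\F_{q^n}=\F_q$, and the tangent cone of $\mathcal{C}$ at $(0,\bar s)$ is $\eta Z^2-(S-\bar s)^2$, whose two linear components are conjugate over $\F_{q^n}$ because $\eta$ is a non-square; so no $\F_{q^n}$-rational place of $\mathcal{C}$ is centered at $(0,\bar s)$, contradicting the choice of $(\bar z,\bar s)$ (which is the center of an $\F_{q^n}$-rational place). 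Once $\bar z\neq0$, $\Delta$ is $\eta$ times a non-zero square, hence a non-square, and item~2 holds.

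With parameters $T,S,A,B\in\F_{q^n}$ satisfying items~1--4 in hand, Theorem~\ref{th:main} gives, for every $\delta'\in\F_{q^{2n}}$ with $\N_{q^{2n}/q^n}(\delta')=\alpha$ — in particular for our $\delta$ — an element $a\in\F_{q^{2n}}^*$ with $\dim_{\F_q}\ker(f_{a,b,s})=2$ where $b=\delta a$, which is the claim. The substantive content is already carried by Theorem~\ref{th:qodd} (absolute irreducibility and genus of $\mathcal{C}$) and Proposition~\ref{prop:HWqodd} (the Hasse--Weil point count), so the corollary itself is an assembly step; the nearest thing to an obstacle is verifying item~2, i.e. that the point supplied by Proposition~\ref{prop:HWqodd} yields an \emph{irreducible} quadratic, which is exactly why the $\bar z\neq0$ observation is needed. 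Alternatively one could record $\bar z\neq0$ directly in Proposition~\ref{prop:HWqodd}, since the $q^s$ places of $\mathcal{C}$ lying over $z=0$ are never $\F_{q^n}$-rational.
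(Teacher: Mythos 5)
Your argument is correct and is essentially the paper's own route: the corollary is obtained by feeding the point supplied by Proposition~\ref{prop:HWqodd} into the parametrization of Section~\ref{sec:qodd} (with $\epsilon=1$, $\beta=\frac{\alpha+1}{1-\alpha}\neq\pm1$) and then invoking Theorem~\ref{th:main}, exactly as the paper does. The one place you go beyond the paper's one-line deduction is the check of item~2: since Proposition~\ref{prop:HWqodd} only asserts $\bar t\neq0$, which a point such as $(0,1)$ also satisfies, the needed non-vanishing $\bar z\neq0$ (equivalently, $\Delta=\eta\bar z^2$ a non-square) really does require your extra observation that at a point $(0,\bar s)$ the tangent cone $\eta Z^2-(S-\bar s)^2$ splits into two Frobenius-conjugate lines, so no $\F_{q^n}$-rational place of $\mathcal{C}$ is centered there; this correctly supplies a detail the paper leaves implicit. (A minor slip in your closing aside: there are $2q^s$ places of $\mathcal{C}$ over the zero of $z$ — two on $\mathcal{C}_1$, each splitting completely in the unramified Artin--Schreier cover — not $q^s$; this does not affect the argument.)
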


\subsection{Proof of Theorem \ref{th:mainmain} for $q$ even}\label{sec:qeven}

Let $q$ be a power of $2$. The conditions of Theorem \ref{th:main} read:
\begin{enumerate}
    \item $T+T^{q^s}+\beta S^{q^{s}+1}+AS=0$, with $\beta=\frac{\alpha}{1+\alpha}\notin\{0,1\}$;
    \item $S\ne0$ and $\mathrm{Tr}_{q^n/2}(T/S^2)=1$;
    \item $B=S^{q^s-1}$;
    \item $A^2+A S^{q^s}+S^{2q^s-2}T+T^{q^s}=0$.
\end{enumerate}
By 1. we get
\[ A= \beta S^{q^s}+\frac{T+T^{q^s}}S, \]
which can be replaced in 4. obtaining
\begin{equation}\label{eq:curveqeven} (\beta^2+\beta) S^{2(q^s+1)}+S^{q^s+1}(T^{q^s}+T)+S^{2q^s}T+S^2T^{q^s}+T^{2q^s}+T^2=0.
\end{equation}
Set $T=S^2 Y$. Then \eqref{eq:curveqeven} reads $H(S,Y)=0$, where
\begin{equation}\label{eq:miserve}
\begin{array}{lll}   H(S,Y)=Y^2+S^{4(q^s-1)}Y^{2q^s}+\beta^2 S^{2(q^s-1)}+S^{q^s-1}Y+ \\ \\
S^{3(q^s-1)}Y^{q^s}+\beta S^{2(q^s-1)}+S^{2(q^s-1)}Y+S^{2(q^s-1)}Y^{q^s}.
\end{array}
\end{equation}
Straightforward computation using $\mathrm{Tr}_{q^s/2}(Y)+\mathrm{Tr}_{q^s/2}(Y)^2=Y^{q^s}+Y$ shows that the polynomial $H(S,Y)$ in \eqref{eq:miserve} splits as follows.
\begin{lemma}
We have $H(S,Y)=G(S,Y)\cdot G^\prime(S,Y)$, where
\[ G(S,Y)= S^{2(q^s-1)}Y^{q^s}+S^{q^s-1}(1+\beta+ \mathrm{Tr}_{q^s/2}(Y))+Y, \]
\[ G^\prime(S,Y)= S^{2(q^s-1)}Y^{q^s}+S^{q^s-1}(\beta + \mathrm{Tr}_{q^s/2}(Y))+Y. \]
\end{lemma}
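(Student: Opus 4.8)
The plan is to verify the claimed factorization $H(S,Y)=G(S,Y)\cdot G'(S,Y)$ by direct expansion, using the characteristic-two identity $\mathrm{Tr}_{q^s/2}(Y)+\mathrm{Tr}_{q^s/2}(Y)^2=Y^{q^s}+Y$ to recombine the terms in \eqref{eq:miserve}. First I would write $\tau=\mathrm{Tr}_{q^s/2}(Y)$, so that $\tau+\tau^2=Y^{q^s}+Y$ and hence $Y^{q^s}=Y+\tau+\tau^2$. The two factors, rewritten in terms of $\tau$, become $G=S^{2(q^s-1)}Y^{q^s}+S^{q^s-1}(1+\beta+\tau)+Y$ and $G'=S^{2(q^s-1)}Y^{q^s}+S^{q^s-1}(\beta+\tau)+Y$. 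Their product, grouped as $((S^{2(q^s-1)}Y^{q^s}+Y)+S^{q^s-1}(1+\beta+\tau))\cdot((S^{2(q^s-1)}Y^{q^s}+Y)+S^{q^s-1}(\beta+\tau))$, expands (in characteristic two, so $(u+v)(u+w)=u^2+u(v+w)+vw$ with $u=S^{2(q^s-1)}Y^{q^s}+Y$) to
\[
\bigl(S^{2(q^s-1)}Y^{q^s}+Y\bigr)^2 + \bigl(S^{2(q^s-1)}Y^{q^s}+Y\bigr)\,S^{q^s-1} + S^{2(q^s-1)}(1+\beta+\tau)(\beta+\tau),
\]
since $(1+\beta+\tau)+(\beta+\tau)=1$.

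Next I would expand each of the three summands. The first is $S^{4(q^s-1)}Y^{2q^s}+Y^2$. The second is $S^{3(q^s-1)}Y^{q^s}+S^{q^s-1}Y$. For the third I use $(1+\beta+\tau)(\beta+\tau)=\beta+\tau+\beta^2+\beta\tau+\beta\tau+\tau^2=\beta+\beta^2+\tau+\tau^2$, so it equals $S^{2(q^s-1)}(\beta+\beta^2)+S^{2(q^s-1)}(\tau+\tau^2)=\beta^2 S^{2(q^s-1)}+\beta S^{2(q^s-1)}+S^{2(q^s-1)}(Y^{q^s}+Y)$, where the last step substitutes $\tau+\tau^2=Y^{q^s}+Y$. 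Collecting all seven resulting monomials,
\[
S^{4(q^s-1)}Y^{2q^s}+Y^2+S^{3(q^s-1)}Y^{q^s}+S^{q^s-1}Y+\beta^2 S^{2(q^s-1)}+\beta S^{2(q^s-1)}+S^{2(q^s-1)}Y^{q^s}+S^{2(q^s-1)}Y,
\]
which is exactly $H(S,Y)$ as written in \eqref{eq:miserve}. This proves the lemma.

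The computation is entirely routine; the only points requiring care are the bookkeeping of the exponents $S^{k(q^s-1)}$ for $k=1,2,3,4$ and the correct use of the Frobenius-semilinearity of the trace, namely that $\mathrm{Tr}_{q^s/2}$ is additive and that $\tau^2=\mathrm{Tr}_{q^s/2}(Y)^2=\mathrm{Tr}_{q^s/2}(Y^2)$ so that the identity $\tau+\tau^2=Y+Y^{q^s}$ is the standard Artin--Schreier trace relation over $\mathbb{F}_{q^s}$. I expect no genuine obstacle here — the lemma is a formal identity verified by expansion — so the main (and minor) hazard is simply an arithmetic slip in matching the eight terms of the product against the eight terms of $H$. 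A clean way to organize the check, which I would follow, is to treat $S^{q^s-1}$ as a single indeterminate $\sigma$ and $\tau$ as an indeterminate constrained only by $\tau+\tau^2=Y^{q^s}+Y$, reducing the verification to a polynomial identity in $\sigma$, $Y$, $Y^{q^s}$, and $\beta$ modulo that single relation.
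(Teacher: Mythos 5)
Your verification is correct and follows exactly the route the paper intends: the paper offers no written proof beyond asserting that a ``straightforward computation'' with the identity $\mathrm{Tr}_{q^s/2}(Y)+\mathrm{Tr}_{q^s/2}(Y)^2=Y^{q^s}+Y$ yields the splitting, and your expansion via $(u+v)(u+w)=u^2+u(v+w)+vw$ with $(1+\beta+\tau)+(\beta+\tau)=1$ is precisely that computation carried out in full. All eight terms match $H(S,Y)$ as in \eqref{eq:miserve}, so the proposal is complete.
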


The condition 2. is equivalent to the existence of an element $Z\in \F_{q^n}$ such that
\begin{equation}\label{eq:T}
T=S^2(Z^2+Z+\epsilon),
\end{equation}
for some fixed $\epsilon\in\mathbb{F}_{q^n}$ such that $\mathrm{Tr}_{q^{n}/2}(\epsilon)=1$.

\noindent Let $\mathcal{C}$ be the plane curve with affine equation $F(S,Z)=G^\prime(S,Z^2+Z+\epsilon)$.

\medskip

In order to prove Theorem \ref{th:mainmain} when $q$ is even, by Theorem \ref{th:main} and the arguments at the beginning of Section \ref{sec:qeven}, it is enough to prove the existence of an $\fqn$-rational affine point $(\bar{s},\bar{z})$ of $\mathcal{C}$ such that $\bar{s}\ne0$ and $\bar{z}^2+\bar{z}+\epsilon\ne0$.
This is done by showing that $\mathcal{C}$ is absolutely irreducible, computing its genus, and applying the Hasse-Weil lower bound.
To this aim, we consider the following subcovers:
\[
\varphi_2:\C\to\C_2,\quad (S,Z)\mapsto(X=S^{q^s-1},Z), \]
\[ \varphi_1:\C_2\to\C_1,\quad (X,Z)\mapsto(X,Y=Z^2+Z+\epsilon).
\]
The curves $\C_2,\C_1$ have equation $\C_2\colon F_2(X,Z)=0$ and $\C_1\colon F_1(X,Y)=0$, where
\[ F_2(X,Z)= X^2(Z^2+Z+\epsilon)^{q^s}+X(\beta + \mathrm{Tr}_{q^s/2}(Z^2+Z+\epsilon))+Z^2+Z+\epsilon, \]
\[ F_1(X,Y)= X^2Y^{q^s}+X(\beta + \mathrm{Tr}_{q^s/2}(Y))+Y. \]

We first prove that $\mathcal{C}_2$ is absolutely irreducible by direct inspection, and that $\mathcal{C}$ is absolutely irreducible being a Kummer cover of $\mathcal{C}_2$. 
To compute the genus of $\mathcal{C}$, we start by the genus of the absolutely irreducible subcover $\mathcal{C}_1$ of $\mathcal{C}_2$, which is computed with the Hurwitz genus formula. 
Then we compute the genus of $\mathcal{C}_2$ as an Artin-Schreier cover of $\mathcal{C}_1$. 
Finally, the genus of the Kummer cover $\mathcal{C}$ of $\mathcal{C}_2$ is computed.

\medskip

Let $\gamma,\gamma+1$ be the roots of $Z^2+Z+\epsilon\in\mathbb{F}_{q^n}[Z]$. Hence, $\mathrm{Tr}_{q^{2n}/q^n}(\gamma)=1$ and $\mathrm{N}_{q^{2n}/q^n}(\gamma)=\epsilon$; also, $\mathrm{Tr}_{q^{n}/2}(\epsilon)=1$ implies $\gamma\in\mathbb{F}_{q^{2n}}\setminus\mathbb{F}_{q^n}$.

 \begin{lemma}\label{lemma:C'2irr}
 The curve $\mathcal{C}_2$ is absolutely irreducible.
 \end{lemma}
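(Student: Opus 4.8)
The plan is to use that $F_2(X,Z)$ has $X$-degree $2$, so $\mathcal{C}_2$ is a double cover of the affine $Z$-line and its absolute irreducibility is equivalent to $F_2$ being irreducible over $\mathbb{K}(Z)[X]$. Write $F_2=AX^2+BX+C$ with $A=((Z-\gamma)(Z-\gamma-1))^{q^s}$, $C=(Z-\gamma)(Z-\gamma-1)$ and $B=\beta+\mathrm{Tr}_{q^s/2}(Z^2+Z+\epsilon)$, and note that $B$ is monic of $Z$-degree $q^s$ and that $B(\gamma)=B(\gamma+1)=\beta\ne0$, since $\mathrm{Tr}_{q^s/2}$ kills $0=\gamma^2+\gamma+\epsilon=(\gamma+1)^2+(\gamma+1)+\epsilon$. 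In particular $\gcd(A,B,C)=1$, so $F_2$ is primitive in $X$, and it suffices to show $F_2$ has no root $r\in\mathbb{K}(Z)$.

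Assuming $r=f/g$ with $f,g\in\mathbb{K}[Z]$ coprime and $g$ monic, the relation $Af^2+Bfg+Cg^2=0$ gives $g\mid A$ and $f\mid C$, so $g=(Z-\gamma)^k(Z-\gamma-1)^\ell$ with $k,\ell\le q^s$ and $f=\mu(Z-\gamma)^a(Z-\gamma-1)^b$ with $a,b\in\{0,1\}$, $\mu\in\mathbb{K}^*$. Next I would compare valuations at $Z=\gamma$: the three summands have orders $q^s+2a$, $a+k$, $1+2k$, the middle one is always the minimum, and it must be matched for the sum to vanish; this forces $(a,k)\in\{(1,0),(0,q^s)\}$, and symmetrically $(b,\ell)\in\{(1,0),(0,q^s)\}$. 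Up to the symmetry $\gamma\leftrightarrow\gamma+1$ and replacing $r$ by the companion root (whose product with $r$ is $C/A$), only the shapes $r=\mu C$ and $r=\mu(Z-\gamma)(Z-\gamma-1)^{-q^s}$ survive.

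To finish, I would substitute each shape back. For $r=\mu C$, dividing $Ar^2+Br+C=0$ by $C$ yields $B=\mu AC+\mu^{-1}$, impossible since $\deg_Z(\mu AC)=2q^s+2>q^s=\deg_Z B$. For $r=\mu(Z-\gamma)(Z-\gamma-1)^{-q^s}$, clearing the denominator $(Z-\gamma-1)^{q^s}$ and cancelling the resulting common factor $Z-\gamma$ gives $\mu B=\mu^2(Z-\gamma)^{q^s+1}+(Z-\gamma-1)^{q^s+1}$; the right side has degree $q^s+1$ unless $\mu=1$, and for $\mu=1$, setting $P=Z-\gamma$ (so $Z^2+Z+\epsilon=P^2+P$ and, by telescoping the additive polynomial, $B=\beta+P^{q^s}+P$) the identity collapses to $\beta+P^{q^s}+P=P^{q^s}+P+1$, i.e. $\beta=1$, contradicting $\beta\ne1$. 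Hence $F_2$ is irreducible and $\mathcal{C}_2$ absolutely irreducible.

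The hardest part will be the valuation bookkeeping of the second paragraph that cuts the possible $(a,b,k,\ell)$ down to the two shapes of $r$; after that the argument is a degree count, save for the single borderline value $\mu=1$, where precisely the telescoping identity $\mathrm{Tr}_{q^s/2}(P^2+P)=P^{q^s}+P$ together with the hypothesis $\beta\ne1$ closes the gap. (An alternative route would be to pass to the Artin--Schreier model $V^2+V=AC/B^2$ of $\mathcal{C}_2$ and reduce its even-order poles, but the root-counting above keeps the computations shorter.)
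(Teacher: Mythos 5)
Your proof is correct and follows essentially the same route as the paper: both rule out a factorization of $F_2$ into factors linear in $X$ by analyzing divisibility at the two roots $\gamma,\gamma+1$ of $Z^2+Z+\epsilon$, using $\beta\neq 0$ to exclude the mixed divisor patterns and $\beta\neq 1$ to eliminate the remaining case. Your rational-root/valuation phrasing (ending with the telescoped identity $\mathrm{Tr}_{q^s/2}(P^2+P)=P^{q^s}+P$ forcing $\beta=1$) is just a repackaging of the paper's explicit determination of the factor coefficients, where the same contradiction $\beta=1$ is reached by a degree count and evaluation at $Z=\gamma$.
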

 \begin{proof}
By contradiction, suppose $F_2(X,Z)=\hat{F}(X,Z)\cdot \tilde{F}(X,Z)$ for some non-constant polynomials $\hat{F},\tilde{F}\in\mathbb{K}[X,Z]$.
Then $\hat{F}(X,Z)=X\cdot A(Z)+B(Z)$ and $\tilde{F}(X,Z)=X\cdot C(Z)+D(Z)$; up to scalar multiplication,
\[
A(Z)=(Z+\gamma)^a(Z+\gamma+1)^b,\qquad C(Z)=(Z+\gamma)^c(Z+\gamma+1)^d,
\]
where $a,b,c,d\geq0$ satisfy $a+c=b+d=q^s$.
Also,
\[ (Z+\gamma)^a(Z+\gamma+1)^bD(Z)+(Z+\gamma)^c(Z+\gamma+1)^dB(Z)=\beta + \mathrm{Tr}_{q^s/2}(Z^2+Z+\epsilon). \]
Clearly, $a,b \in \{0,q^s\}$ as $\beta\ne0$.
If $a=0$, then $b=c=q^s$ and $d=0$, since $B(Z)D(Z)=Z^2+Z+\epsilon$; hence
\begin{equation}\label{eq:Zirr} (Z+\gamma+1)^{q^s}D(Z)+(Z+\gamma)^{q^s}B(Z)=\beta + \mathrm{Tr}_{q^s/2}(Z^2+Z+\epsilon).
\end{equation}
This implies $D(\gamma)\ne0$ and $B(\gamma+1)\ne0$, whence $D(Z)=\lambda(Z+\gamma+1)$ and $B(Z)=\lambda^{-1}(Z+\gamma)$ for some $\lambda\in\mathbb{K}^*$. With $Z=\gamma$ in \eqref{eq:Zirr}, we get $\beta=1$, a contradiciton.
If $a=q^s$, the same arguments yield a contradiction.
 \end{proof}

\begin{proposition}\label{prop:C'1}
The curve $\C_1$ is absolutely irreducible with genus $q^s/2$.
\end{proposition}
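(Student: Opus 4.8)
The plan is to prove Proposition \ref{prop:C'1} by analyzing the curve $\mathcal{C}_1 : F_1(X,Y) = X^2 Y^{q^s} + X(\beta + \mathrm{Tr}_{q^s/2}(Y)) + Y = 0$ as a covering of the projective $Y$-line. Solving the defining equation for $X$ exhibits $\mathcal{C}_1$ as a double cover of $\mathbb{P}^1_Y$: writing the equation as a quadratic in $X$ with discriminant $(\beta + \mathrm{Tr}_{q^s/2}(Y))^2 - 4 Y^{q^s} \cdot Y = (\beta + \mathrm{Tr}_{q^s/2}(Y))^2$ in characteristic $2$... wait — in characteristic $2$ we have $4Y^{q^s+1}=0$, so the "discriminant" is just $(\beta + \mathrm{Tr}_{q^s/2}(Y))^2$, a perfect square. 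That tells us the degree-$2$ map $X \mapsto Y$ is not a Kummer cover but an Artin--Schreier-type cover: dividing the equation by $X^2 Y^{q^s}$ and substituting $W = 1/(XY^{q^{s-1}})$ or a similar rational substitution should bring it to the form $W^2 + W = (\text{rational function of } Y)$. First I would carry out that substitution carefully to put $\mathcal{C}_1$ into Artin--Schreier normal form $W^2+W = R(Y)$ over $\mathbb{F}_{q^n}(Y)$, simultaneously confirming absolute irreducibility (which holds as soon as $R(Y)$ is not of the form $v^2+v$ for a rational function $v$, equivalently has a pole of odd order somewhere).

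Next I would locate and classify the poles of $R(Y)$ on $\mathbb{P}^1_Y$ and compute the pole orders, which by Theorem \ref{th:artinschreier} (with $\bar q = 2$) directly give the genus via $g(\mathcal{C}_1) = 2 g(\mathbb{P}^1) + \tfrac{1}{2}(-2 + \sum_P (m_P+1)) = -1 + \tfrac{1}{2}\sum_P (m_P+1)$. Note $\mathrm{Tr}_{q^s/2}(Y) = Y + Y^2 + \cdots + Y^{q^s/2}$ is a polynomial of degree $q^s/2$ in $Y$; this is the source of the high pole order at $Y = \infty$. I expect the ramification to be concentrated at one place — most plausibly at a single point over $Y=\infty$ — with the Artin--Schreier invariant $m_\infty$ essentially $q^s/2$ (possibly after adjusting $R$ by a term of the form $v^2+v$ to minimize the pole order, as required to apply the theorem), so that $\tfrac12(m_\infty+1-2+\cdots)$ works out to $q^s/2$. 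If instead the relevant place contributing is $Y=0$ or one of the zeros of $\beta + \mathrm{Tr}_{q^s/2}(Y)$, I would handle those the same way; the key bookkeeping is: for each candidate pole, find the minimal $m_P$ (necessarily odd, or $-1$ if the place is unramified) over all choices of the local rational function $\omega$ in Theorem \ref{th:artinschreier}.

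The main obstacle I anticipate is precisely this pole-order minimization at $Y = \infty$: the raw expression for $R(Y)$ will have a pole of order roughly $q^s$ or $2(q^s-1)$ there, but since we are in an Artin--Schreier cover we are free to replace $R$ by $R + \omega^2 + \omega$ for any $\omega \in \mathbb{F}_{q^n}(Y)$, and we must choose $\omega$ to kill all the even-order and reducible parts of the polar expansion, leaving the true Artin--Schreier conductor $m_\infty$. Getting this reduction exactly right — and checking it is odd so that the cover is genuinely ramified (hence $\mathcal{C}_1$ absolutely irreducible) and the Hurwitz formula applies — is the delicate step; everything else is bounded computation. Once $m_\infty$ is pinned down and all other places are shown unramified, plugging into the Hurwitz formula of Theorem \ref{th:artinschreier} yields $g(\mathcal{C}_1) = q^s/2$ immediately.
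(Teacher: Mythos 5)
Your route is viable and genuinely different in execution from the paper's, although both exploit the same degree-$2$ projection to $\mathbb{P}^1_Y$. The paper first imports absolute irreducibility from Lemma \ref{lemma:C'2irr} (since $\C_1$ is a subcover of $\C_2$) and then computes the genus by applying the Hurwitz formula directly to the singular plane model: it resolves the double point at infinity by iterated quadratic transformations to see $v_{E_1}(y)=-2$, and computes the wild different exponents ($=2$) at the $q^s/2+1$ ramified places via explicit Laurent expansions. Your plan instead normalizes the quadratic in $X$: putting $X=\frac{\beta+\mathrm{Tr}_{q^s/2}(Y)}{Y^{q^s}}W$ (not $W=1/(XY^{q^{s-1}})$) gives the Artin--Schreier model $W^2+W=R(Y)$ with $R(Y)=\frac{Y^{q^s+1}}{(\beta+\mathrm{Tr}_{q^s/2}(Y))^2}$, and then Theorem \ref{th:artinschreier} delivers both absolute irreducibility (for free, since $R$ has a pole of odd order) and the genus, with no singularity analysis needed. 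This is a legitimate alternative and arguably cleaner.

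One caution: your anticipated ramification picture is wrong, and if you computed only at $Y=\infty$ you would get genus $0$, not $q^s/2$. Since $\deg Y^{q^s+1}=q^s+1$ and $\deg(\beta+\mathrm{Tr}_{q^s/2}(Y))^2=q^s$, the place at infinity carries only a simple pole of $R$, so $m_\infty=1$; the ramification is \emph{not} concentrated there with conductor $\approx q^s/2$. The essential contribution comes from the $q^s/2$ simple zeros $\bar y$ of $\beta+\mathrm{Tr}_{q^s/2}(Y)$ (simple because the derivative of $\mathrm{Tr}_{q^s/2}$ is $1$), where $R$ has a pole of order $2$: writing $R=\frac{\bar y^{q^s+1}}{(Y-\bar y)^2}+\frac{\bar y^{q^s}}{Y-\bar y}+O(1)$ and subtracting $\omega^2+\omega$ with $\omega=\sqrt{\bar y^{q^s+1}}/(Y-\bar y)$ leaves a pole of order exactly $1$, i.e. $m_{\bar y}=1$, \emph{provided} $\bar y^{2q^s}\ne\bar y^{q^s+1}$, i.e. $\bar y\notin\mathbb{F}_{q^s}$; this is exactly where the hypothesis $\beta\notin\{0,1\}$ enters, since $\bar y\in\mathbb{F}_{q^s}$ would force $\beta=\mathrm{Tr}_{q^s/2}(\bar y)\in\mathbb{F}_2$. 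With these data the formula of Theorem \ref{th:artinschreier} gives $g(\C_1)=\frac{1}{2}\left(-2+\left(\frac{q^s}{2}+1\right)\cdot 2\right)=\frac{q^s}{2}$, as claimed; so your ``key bookkeeping'' at every candidate pole is not optional fine-tuning but the heart of the proof.
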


\begin{proof}
As $\C_1$ is a subcover of $\C_2$, the absolute irreducibility of $\C_1$ follows from Lemma \ref{lemma:C'2irr}.
Let $(X\colon Y\colon V)$ be the homogeneous coordinates of the affine point $(X,Y)$.
By direct computation, the affine points of $\mathcal{C}_1$ are simple, and hence we identify each of them with the unique place of $\mathcal{C}_1$ centered at it; the points at infinity $E_0=(1\colon 0\colon 0)$ and $E_1=(0\colon 1\colon 0)$ of $\mathcal{C}_1$ are singular. The point $E_0$ is $q^s$-fold with unique tangent line $\ell_Y : Y=0$ having intersection multiplicity $q^s+1$ with $\mathcal{C}_1$ at $E_0$; the point $E_1$ is double with unique tangent line $\ell_X: X=0$ having intersection multiplicity $q^s+1$ with $\mathcal{C}_1$ at $E_1$.

We compute the genus of $\C_1$ by applying Theorem \ref{th:hurwitz} to the covering $\varphi_0\colon\C_1\to\mathbb{P}_y^1$, $(X\colon Y\colon V)\mapsto(Y\colon V)$, where $y$ is the coordinate function of $Y$; $\varphi_0$ has degree $2$.
We describe the places of $\mathbb{P}_y^1$ which ramify in $\varphi_0$; we denote by $P_\mu$, $\mu\in\mathbb{K}$, the zero of $y-\mu$ on $\mathbb{P}_y^1$, and by $P_\infty$ the pole of $y$ on $\mathbb{P}_y^1$.
\begin{itemize}
    \item If $\bar{y}\in\mathbb{K}$ satisfies $\bar{y}\ne0$ and $\beta+\mathrm{Tr}_{q^s/2}(\bar{y})\ne0$, then $F_1(X,\bar{y})$ has two distinct roots $\bar{x}_1,\bar{x}_2\in\mathbb{K}$, and hence $P_{\bar y}$ does not ramify in $\varphi_0$.
    \item If $\bar{y}\in\mathbb{K}$ is one of the $q^s/2$ distinct roots of $\beta+\mathrm{Tr}_{q^s/2}(Y)$, then $(\bar{x},\bar{y})\in\varphi_0^{-1}(P_{\bar{y}})$, with $\bar{x}=\sqrt{\bar{y}^{1-q^s}}$. The tangent line to $\C_1$ at $(\bar{x},\bar{y})$ is $\ell_{\bar y}:Y-\bar{y}=0$, having multiplicity intersection $2$ with $\C_1$ at $(\bar{x},\bar{y})$. Hence,
    \[e((\bar{x},\bar{y})\mid P_{\bar y})= e((\bar{x},\bar{y})\mid P_{\bar y})\cdot v_{P_{\bar y}}(y-\bar{y})=v_{(\bar{x},\bar{y})}(y-\bar{y})=2,\]
    so that $P_{\bar y}$ totally ramifies in $\varphi_0$.
    \item The change of coordinates $(X\colon Y\colon V)\mapsto(X\colon V\colon Y)$ maps $E_1$ to the origin $E_2=(0\colon 0\colon 1)=(0,0)$ and $\mathcal{C}_1$ to the curve $\overline{\mathcal{C}}_1$ with affine equation $\overline{F}_1(X,Y)=0$ where \[\overline{F}_1(X,Y)=X^2+X(\beta Y^{q^s+1}+\sum_{i=0}^{sh-1}Y^{q^s+1-2^i}) + Y^{q^s+1}.\]
The point $E_2$ is double for $\overline{\mathcal{C}}_2$ with tangent line $\ell_X\colon X=0$; while this holds, we apply iteratively the quadratic transformation $(X\colon Y\colon V)\mapsto(XV\colon Y^2\colon YV)$ which maps $\overline{\mathcal{C}}_1$ to the curve with equation $\overline{F}_1(XY,Y)/Y^2=0$. After $k$ times, the curve has equation
\[X^2+X(\beta Y^{q^s+1-k}+\sum_{i=0}^{sh-1}Y^{q^s+1-2^i-k})+Y^{q^s+1-2k}=0.\]
Hence, after $q^s/2$ times, the curve has only one affine point on the line $\ell_Y\colon Y=0$, which is a simple point.
This means that $\mathcal{C}_1$ has exactly one place centered at $E_1$, which we identify with $E_1$.
Since the intersection multiplicity of $\C_1$ at $E_1$ with $\ell_\infty\colon V=0$ and $\ell_Y$ is $2$ and $0$ respectively, we have that $v_{E_1}(y)=-2$; see \cite[Theorem 4.36]{HKT}.
Thus, the pole of $y$ in $\mathbb{P}^1_y$ is totally ramified in $\varphi_0$.

\item The unique place centered at $E_2=(0,0)$ is clearly a zero of $y$. The only place of $\mathbb{P}^1_y$ which can be covered by $E_0$ is the zero of $y$. Therefore, the zero of $y$ in $\mathbb{P}^1_y$ is not ramified in $\varphi_0$, and $E_0$ and $E_2$ are the simple zeros of $y$ on $\mathcal{C}_1$.
\end{itemize}
The $q^s/2+1$ ramification places $P'$, namely $E_1$ and $(\bar{x},\bar{y})$ with $\beta+\mathrm{Tr}_{q^2/2}(\bar{y})=0$, are wildly ramified. For each of them, we choose a local parameter $t^\prime$ at $P^\prime$, a local parameter $t$ at the place $P$ lying under $P^\prime$ in $\varphi_0$, and compute $v_{P^\prime}(d\varphi_0^*(t)/dt^\prime)$, where the pull-back $\varphi_0^*$ of $\varphi_0$ is the identity on $\mathbb{K}(\mathbb{P}_y^1)=\mathbb{K}(y)$.
\begin{itemize}
    \item Let $P^\prime=E_1$, lying over $P=P_\infty$. We choose $t=1/y$. From $v_{E_1}(y)=-2$ we get $v_{E_1}(x)=q^s-1$; hence we can choose $t^\prime=1/(xy^{q^s/2})$. By direct computation, the Laurent series of $t$ at $E_1$ with respect to $t^\prime$ is $t= (t^\prime)^2 +(t^\prime)^3 + w$, with $v_{E_1}(w)\geq4$. Thus, $\frac{dt}{dt^\prime}=(t^\prime)^2+\frac{dw}{dt^\prime}$ has valuation $2$ at $E_1$.
    \item Let $P^\prime=(\bar{x},\bar{y})$, lying over $P=P_{\bar y}$ with $\beta+\mathrm{Tr}_{q^s/2}(\bar y)=0$. We choose $t=y-\bar{y}$ and $t^\prime=x-\bar{x}$.
    By direct computation, the Laurent series of $t$ at $P^\prime$ with respect to $t^\prime$ is $t=\frac{\bar{y}^{q^s}}{\bar{x}+1}(t^\prime)^2+\frac{\bar{y}^{q^s}}{\bar{x}^2+1}(t^\prime)^3+w$, with $v_{P^\prime}(w)\geq4$.
    Thus, $\frac{dt}{dt^\prime}=\frac{\bar{y}^{q^s}}{\bar{x}^2+1}(t^\prime)^2+\frac{dw}{dt^\prime}$ has valuation $2$ at $P^\prime$.
\end{itemize}
Theorem \ref{th:hurwitz} now yields
\[ 2g(\C_1)-2=\deg(\varphi_0)\cdot(2g(\mathbb{P}_y^1)-2)+\left(\frac{q^s}{2}+1\right)\cdot2, \]
whence $g(\C_1)=\frac{q^s}{2}$.
\end{proof}

\begin{proposition}\label{prop:C'2}
The curve $\C_2$ has genus $q^s-1$.
\end{proposition}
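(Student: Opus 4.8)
The plan is to realize $\mathcal{C}_2$ as a degree-$2$ Artin--Schreier cover of $\mathcal{C}_1$ via the covering $\varphi_1\colon\mathcal{C}_2\to\mathcal{C}_1$, $(X,Z)\mapsto(X,Y=Z^2+Z+\epsilon)$, and then to compute $g(\mathcal{C}_2)$ from the Hurwitz genus formula (Theorem \ref{th:hurwitz}), feeding in $g(\mathcal{C}_1)=q^s/2$ from Proposition \ref{prop:C'1}. On function fields, $\varphi_1$ amounts to adjoining to $\mathbb{K}(\mathcal{C}_1)$ an element $z$ with $z^2+z=y-\epsilon$ (recall $\mathrm{char}\,\mathbb{K}=2$); this is a separable Artin--Schreier extension, and it is a genuine field extension of degree $2$ because $\mathcal{C}_2$ is absolutely irreducible by Lemma \ref{lemma:C'2irr}. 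Hence everything reduces to determining the ramification of $\varphi_1$, i.e. the integers $m_P$ attached in Theorem \ref{th:artinschreier} to the places $P$ of $\mathcal{C}_1$.

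First I would observe that $\varphi_1$ can ramify only over poles of $y-\epsilon$, equivalently over poles of $y$ since $\epsilon$ is a nonzero constant: at any place $P$ with $v_P(y-\epsilon)\geq0$ one takes $\omega=0$, so $v_P\big((y-\epsilon)-(\omega^2+\omega)\big)\geq0$ and $m_P=-1$. By (the proof of) Proposition \ref{prop:C'1}, $y$ has exactly one pole on $\mathcal{C}_1$, the place $E_1$, with $v_{E_1}(y)=-2$ and $v_{E_1}(x)=q^s-1$. The crucial point is to show that $E_1$ is unramified as well: here $v_{E_1}(y-\epsilon)=-2$ is divisible by $p=2$, so a naive estimate would predict wild ramification, and one must exhibit a function that removes the pole. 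I would take $\omega=x\,y^{q^s/2}\in\mathbb{K}(\mathcal{C}_1)$, rewrite $\omega^2=x^2y^{q^s}$ by means of the equation $F_1(x,y)=0$ of $\mathcal{C}_1$ as $\omega^2=\beta x+x\,\mathrm{Tr}_{q^s/2}(y)+y$, and obtain
\[
\omega^2+\omega-(y-\epsilon)=\beta x+x\big(\mathrm{Tr}_{q^s/2}(y)+y^{q^s/2}\big)+\epsilon .
\]
Since in characteristic $2$ adding $y^{q^s/2}$ cancels the top term of $\mathrm{Tr}_{q^s/2}(y)=\sum_{i=0}^{sh-1}y^{2^i}$ (where $q=2^h$), every summand on the right has non-negative valuation at $E_1$: indeed $v_{E_1}(\beta x)=q^s-1>0$, $v_{E_1}(\epsilon)=0$, the middle term is $0$ when $q^s=2$, and otherwise $v_{E_1}\big(x\,y^{2^i}\big)=(q^s-1)-2^{i+1}\geq q^s/2-1\geq0$ for $0\leq i\leq sh-2$. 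Therefore $v_{E_1}\big((y-\epsilon)-(\omega^2+\omega)\big)\geq0$, i.e. $m_{E_1}=-1$.

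Consequently $\varphi_1$ is unramified at every place of $\mathcal{C}_1$, so its different is trivial and Theorem \ref{th:hurwitz} (equivalently, the genus formula of Theorem \ref{th:artinschreier} with $\bar q=2$ and $\sum_P(m_P+1)=0$) yields
\[
2g(\mathcal{C}_2)-2=\deg(\varphi_1)\big(2g(\mathcal{C}_1)-2\big)=2\Big(2\cdot\tfrac{q^s}{2}-2\Big)=2q^s-4 ,
\]
whence $g(\mathcal{C}_2)=q^s-1$. The main obstacle is exactly the local analysis at $E_1$: choosing the right auxiliary function $\omega$ and doing the wild-ramification bookkeeping there correctly, using the defining equation of $\mathcal{C}_1$ together with the valuations $v_{E_1}(x)=q^s-1$ and $v_{E_1}(y)=-2$ recorded in the proof of Proposition \ref{prop:C'1}; the remaining steps are immediate from the results already established.
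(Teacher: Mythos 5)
Your proof is correct and takes essentially the same approach as the paper: there too, $\varphi_1$ is viewed as a degree-$2$ Artin--Schreier cover of $\mathcal{C}_1$ (irreducibility coming from Lemma \ref{lemma:C'2irr}), the only possible ramification place is the unique pole $E_1$ of $y-\epsilon$, and it is shown unramified using exactly your $\omega=xy^{q^s/2}$ (the paper phrases this as a Laurent series in the local parameter $t'=1/(xy^{q^s/2})$, which is the computation you carry out explicitly from $F_1(x,y)=0$). The unramified Hurwitz formula then gives $g(\mathcal{C}_2)=q^s-1$, as in the paper.
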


\begin{proof}
By Lemma \ref{lemma:C'2irr}, $\C_2$ is absolutely irreducible; hence, the covering $\varphi_1:\C_2\to\C_1$, $(X,Z)\mapsto(X,Y=Z^2+Z+\epsilon)$, is an Artin-Schreier covering of degree $2$.
Every place of $\C_1$ which is not a pole of $y-\epsilon$ is unramified in $\varphi_1$. We consider the unique pole of $y-\epsilon$ on $\C_1$, namely $E_1$.
By direct computation, the Laurent series of $y$ at $E_1$ with respect to the local parameter $t^\prime=1/(xy^{q^s/2})$ is $y-\epsilon=(t^\prime)^{-2}+(t^\prime)^{-1}+w$, with $v_{E_1}(w)\geq0$. Choosing $\omega=(t^\prime)^{-1}$, we have $v_{E_1}((y-\epsilon)-(\omega^2+\omega))=v_{E_1}(w)\geq0$. Thus, by Theorem \ref{th:artinschreier}, $E_1$ is unramified in $\varphi_1$. Altogether, the covering $\varphi_1:\C_2\to\C_1$ is unramified. This implies
\[ 2g(\C_2)-2 = \deg(\varphi_1)\cdot(2g(\C_1)-2), \]
whence $g(\C_2)=q^s-1$.
\end{proof}

\begin{theorem}\label{th:C'} For the curve $\mathcal{C}$ the following holds.
\begin{itemize}
    \item[(a)] $\C$ is absolutely irreducible with genus $q^{2s}-q^s-1$.
    \item[(b)] Let $s$ and $z$ be the coordinate functions of $\mathcal{C}$, and let $t=s^2(z^2+z+\epsilon)$.
The number of $\mathbb{F}_{q^n}$-rational places of $\C$ which are zeros of $t$, or poles of either $s$ or $z$ or $t$, is at most $2q^s+2$.
\end{itemize}
\end{theorem}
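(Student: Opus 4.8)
The plan is to build $\mathcal{C}$ from the already-analyzed curves by the tower $\mathcal{C}\xrightarrow{\varphi_2}\mathcal{C}_2\xrightarrow{\varphi_1}\mathcal{C}_1$, using the genus $g(\mathcal{C}_2)=q^s-1$ from Proposition \ref{prop:C'2}. For part (a), first I would observe that $\varphi_2\colon\mathcal{C}\to\mathcal{C}_2$, $(S,Z)\mapsto(X=S^{q^s-1},Z)$, realizes $\mathcal{C}$ as a Kummer cover of $\mathcal{C}_2$ of degree $q^s-1$ (recall $\gcd(q^s-1,p)=1$), associated to the function $s^{q^s-1}=x$ on $\mathcal{C}_2$: indeed $\mathcal{C}$ has affine equations $F_2(X,Z)=0$ together with $S^{q^s-1}=X$. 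To apply Theorem \ref{th:kummer} I need a place $Q$ of $\mathcal{C}_2$ where $v_Q(x)$ is coprime to $q^s-1$; the natural candidates are the places of $\mathcal{C}_2$ lying over the point $E_2=(0,0)$ on $\mathcal{C}_1$ (a zero of $x$) or over the singular points at infinity. I would locate the zeros and poles of $x$ on $\mathcal{C}_2$ by pulling back, through $\varphi_1$, the corresponding analysis of $x$ on $\mathcal{C}_1$ already carried out in the proof of Proposition \ref{prop:C'1}; a place with valuation $\pm1$ gives the needed coprimality and simultaneously forces absolute irreducibility of $\mathcal{C}$.

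Next I would run the Kummer Hurwitz formula
\[
g(\mathcal{C})=1+(q^s-1)\bigl(g(\mathcal{C}_2)-1\bigr)+\frac{1}{2}\sum_{P\in\mathbb{P}(\mathcal{C}_2)}\bigl((q^s-1)-r_P\bigr),
\]
with $r_P=\gcd(v_P(x),q^s-1)$. So the key combinatorial step is to enumerate the places $P$ of $\mathcal{C}_2$ at which $v_P(x)\not\equiv 0\pmod{q^s-1}$, i.e. the zeros and poles of $x$, and compute their contributions. The zeros of $x$ on $\mathcal{C}_1$ sit over $E_2$ and over $E_0=(1:0:0)$ from the earlier analysis; the poles of $x$ are located at the infinite singular points. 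Pulling these back through the (essentially unramified) degree-$2$ cover $\varphi_1$, and recording their ramification in $\varphi_1$, I get the list of relevant places of $\mathcal{C}_2$ together with the valuations of $x$. Plugging $g(\mathcal{C}_2)=q^s-1$ into the formula and matching against the target $g(\mathcal{C})=q^{2s}-q^s-1$ determines exactly what the ramification contribution $\sum((q^s-1)-r_P)$ must be, namely $2(q^{2s}-q^s-1)-2-2(q^s-1)(q^s-2)=2(q^s-1)$, i.e. the zeros and poles of $x$ on $\mathcal{C}_2$ must be totally ramified in $\varphi_2$ and there must be exactly two of them; verifying this is the heart of part (a).

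For part (b) I would note that $t=s^2(z^2+z+\epsilon)$, so the poles of $t$ are among the common poles of $s$ and $z$, hence among the poles of $x=s^{q^s-1}$ pulled up to $\mathcal{C}$, and the zeros of $t$ are the zeros of $s$ together with the points where $z^2+z+\epsilon=0$; each of these families has size controlled by the degree $q^s-1$ of $\varphi_2$ and the small number of relevant places on $\mathcal{C}_2$ found above. Counting, the places of $\mathcal{C}$ that are poles of $s$, poles of $z$, poles of $t$, or zeros of $t$ all lie over a bounded set of places of $\mathcal{C}_2$, and since $\deg\varphi_2=q^s-1$ while those places of $\mathcal{C}_2$ are totally ramified or few in number, the total is at most $2q^s+2$; the precise bookkeeping gives the stated bound.

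The main obstacle I expect is part (a): one must control \emph{all} zeros and poles of $x$ on $\mathcal{C}_2$ — in particular at the two singular points at infinity of $\mathcal{C}_1$, where a careful blow-up/local-parameter computation (analogous to the one already done for $E_1$ in the proof of Proposition \ref{prop:C'1}) is needed — and confirm that $x$ has exactly one simple zero and one simple pole on $\mathcal{C}_2$ (in the sense that $v_P(x)$ is coprime to $q^s-1$ at exactly two places, both totally ramified in $\varphi_2$). Any extra place with $v_P(x)\not\equiv 0\pmod{q^s-1}$ would change the genus, so the delicate point is the exhaustiveness of this local analysis at infinity.
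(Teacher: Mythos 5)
Your overall strategy is exactly the paper's: view $\mathcal{C}$ as the Kummer cover of $\mathcal{C}_2$ defined by $S^{q^s-1}=X$, pull the zero/pole structure of $x$ back from $\mathcal{C}_1$ through the (unramified, degree $2$) cover $\varphi_1$, apply Theorem \ref{th:kummer} for irreducibility and the genus, and then count the bad places for part (b). However, your ramification bookkeeping contains a genuine error. On $\mathcal{C}_1$ the zeros of $x$ are $E_2$ (with $v_{E_2}(x)=1$) \emph{and} $E_1$ (with $v_{E_1}(x)=q^s-1$), while the unique pole of $x$ is $E_0$ (with $v_{E_0}(x)=-q^s$); you have instead placed a zero over $E_0$ and poles at both infinite points. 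Pulling back through $\varphi_1$ gives six zeros/poles of $x$ on $\mathcal{C}_2$: two places over $E_2$ with valuation $1$, two over $E_1$ with valuation $q^s-1$, and two over $E_0$ with valuation $-q^s$. Hence exactly \emph{four} places (those over $E_2$ and over $E_0$) have $v_P(x)$ coprime to $q^s-1$ and are totally ramified in $\varphi_2$, while the two places over $E_1$ are unramified because $r_P=q^s-1$. Your consistency check is also miscalculated: $2(q^{2s}-q^s-1)-2-2(q^s-1)(q^s-2)=4(q^s-2)$, not $2(q^s-1)$, and a totally ramified place contributes $q^s-2$, so four such places are needed; with only two, the Kummer--Hurwitz formula would give $g=(q^s-1)^2\neq q^{2s}-q^s-1$. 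So the structure you propose to ``verify'' in the heart of part (a) is not the correct one and cannot be verified.

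This error propagates into part (b). The bound $2q^s+2$ is not obtained from a few totally ramified fibres: the poles of $z$ on $\mathcal{C}$ are precisely the $2(q^s-1)$ places lying over the two \emph{unramified} zeros of $x$ of valuation $q^s-1$ (the places over $E_1$), and these dominate the count; adding the two places over $E_2$ (zeros of $s$ and of $z^2+z+\epsilon$) and the two over $E_0$ (poles of $s$, zeros of $z^2+z+\epsilon$) gives $2(q^s-1)+2+2=2q^s+2$. Once you correct the valuations of $x$ on $\mathcal{C}_1$ and redo the fibre count through the unramified $\varphi_1$ and the Kummer cover $\varphi_2$, both (a) and (b) come out as in the statement; as written, though, the predicted ramification data is inconsistent with the claimed genus and with the counting in (b).
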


\begin{proof}
We compute the valuation of $x$ at the places of $\C_2$. From the proofs of Propositions \ref{prop:C'1} and \ref{prop:C'2} follows that $x$ has exactly $2$ zeros on $\C_1$, namely $E_2$ with $v_{E_2}(x)=1$ and $E_1$ with $v_{E_1}(x)=q^s-1$ ; also, $x$ has exactly $4$ zeros on $\C_2$, namely the two places $Q_1,Q_2$ lying over $E_2$ and the two places $Q_3,Q_4$ over $E_1$. Using the ramification indices, this implies $v_{Q_1}(x)=v_{Q_2}(x)=1$ and $v_{Q_3}(x)=v_{Q_4}(x)=q^s-1$.
The unique pole of $x$ on $\C_1$ is $E_0$; since $v_{E_0}(y)=1$, this implies $v_{E_0}(x)=-q^s$.
The poles of $x$ on $\C_2$ are the two places $R_1,R_2$ lying over $E_0$, with $v_{R_1}(x)=v_{R_2}(x)=-q^s$.

Therefore, by Theorem \ref{th:kummer}, $\C$ is absolutely irreducible and $\varphi_2:\C\to\C_2$ is a Kummer covering of degree $q^s-1$. The places of $\C_2$ which ramify in $\varphi_2$ are exactly $Q_1,Q_2,R_1,R_2$, and they are totally ramified; any other place is unramified in $\varphi_2$.
The genus of $\C$ is
\[g(\C)=1+\deg(\varphi_2)\cdot(g(\C_2)-1)+\frac{1}{2}\cdot 4\cdot(\deg(\varphi_2)-1)=q^{2s}-q^s-1.\]
Using the proofs of Propositions \ref{prop:C'1} and \ref{prop:C'2}, we obtain that:
\begin{itemize}
    \item $s$ has exactly $2$ poles on $\C$, namely the places over $R_1$ or $R_2$;
    \item $z$ has exactly $2(q^s-1)$ poles on $\C$, namely the places over $Q_3$ or $Q_4$;
    \item $s^2$ has exactly $2(q^s-1)+2$ zeros on $\C$; namely, two of them lie over $Q_1$ or $Q_2$, while $2(q^s-1)$ of them lie over $Q_3$ or $Q_4$ (and have been been already considered above);
    \item $z^2+z+\epsilon=y$ has exactly $4$ zeros on $\C$, namely the places over $Q_1,Q_2,R_1,R_2$ (which have been already considered above).
\end{itemize}
Altogether, the number of $\mathbb{F}_{q^n}$-rational places of $\C$ which are poles of $s$, $z$, $t$, or are zeros of $t$, is smaller than or equal to $2q^s+2$.
\end{proof}

From Theorems \ref{th:main} and \ref{th:C'} follows Corollary \ref{cor:mainmain_qeven}, which is our main result Theorem \ref{th:mainmain} when $q$ is even.

\begin{corollary}\label{cor:mainmain_qeven}
Let $q$ be an even prime power, $s\geq1$ be such that $\gcd(s,n)=1$.
Suppose that
\[
n\geq\begin{cases} 4s+2 & \textrm{if}\;q=2\textrm{ and }s>2; \\ 4s+1 & \textrm{otherwise}. \end{cases}
\]
Then for every $\delta\in\mathbb{F}_{q^{2n}}$ satisfying $\mathrm{N}_{q^{2n}/q^n}(\delta)\notin\{0,1\}$ there exists $a\in\mathbb{F}_{q^{2n}}^*$ such that $\dim_{\mathbb{F}_q}\ker(f_{a,b,s})=2$, where $b=\delta a$.
\end{corollary}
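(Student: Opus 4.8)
The plan is to derive the statement from Theorem~\ref{th:main} combined with the geometric information on the curve $\mathcal{C}$ collected in Theorem~\ref{th:C'}, the decisive tool being the Hasse-Weil bound applied to $\mathcal{C}$.

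First I would fix $\delta\in\mathbb{F}_{q^{2n}}$ with $\alpha:=\mathrm{N}_{q^{2n}/q^n}(\delta)\notin\{0,1\}$ and put $\beta=\alpha/(1+\alpha)$; since $q$ is even and $\alpha\neq1$ we have $1+\alpha\neq0$, so $\beta$ is well defined and $\beta\notin\{0,1\}$. By Remark~\ref{rem:adjoint} we may assume $s<n/2$, which is anyway automatic because $n\geq4s+1$. By Theorem~\ref{th:main} and the reduction carried out at the beginning of Section~\ref{sec:qeven} (the substitutions $T=S^2Y$ and $T=S^2(Z^2+Z+\epsilon)$ together with the factorization $H=G\cdot G'$), it suffices to exhibit an $\mathbb{F}_{q^n}$-rational affine point $(\bar s,\bar z)$ of $\mathcal{C}$ with $\bar s\neq0$ and $\bar z^2+\bar z+\epsilon\neq0$; from such a point one recovers the parameters $\bar S=\bar s$, $\bar T=\bar s^2(\bar z^2+\bar z+\epsilon)$, $\bar B=\bar s^{q^s-1}$ and $\bar A=\beta\bar s^{q^s}+(\bar T+\bar T^{q^s})/\bar s$, which satisfy conditions 1--4 of Theorem~\ref{th:main} for the given $\alpha$.

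Next I would invoke Theorem~\ref{th:C'}. By part~(a), $\mathcal{C}$ is absolutely irreducible of genus $g(\mathcal{C})=q^{2s}-q^s-1$, so by Theorem~\ref{th:hasseweil} the number $N_{q^n}$ of its $\mathbb{F}_{q^n}$-rational places satisfies $N_{q^n}\geq q^n+1-2(q^{2s}-q^s-1)\sqrt{q^n}$. By part~(b), at most $2q^s+2$ of these places are poles of $s$, of $z$, or of $t=s^2(z^2+z+\epsilon)$, or zeros of $t$. Any $\mathbb{F}_{q^n}$-rational place $P$ outside this set is not a pole of $s$ or $z$, hence it has a center $(\bar s,\bar z)=(s(P),z(P))$ which is an affine point of $\mathcal{C}$; moreover $t(P)\neq0$ forces $\bar s\neq0$ and $\bar z^2+\bar z+\epsilon=t(P)/\bar s^2\neq0$, exactly as required. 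Thus the whole proof reduces to showing
\[
q^n+1-2(q^{2s}-q^s-1)\sqrt{q^n}>2q^s+2
\]
under the stated hypotheses on $n$.

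The main obstacle will be precisely this elementary but somewhat delicate inequality. I would write $u=\sqrt{q^n}=q^{n/2}$, so that the claim becomes $\Phi(u):=u^2-2(q^{2s}-q^s-1)u-(2q^s+1)>0$. Since $\Phi$ is an upward parabola with vertex at $u=q^{2s}-q^s-1<2q^{2s}$, it is enough to evaluate $\Phi$ at the smallest value of $u$ allowed by the hypotheses. When $q\geq4$ one has $\sqrt q\geq2$, so $n\geq4s+1$ gives $u\geq q^{2s}\sqrt q\geq2q^{2s}$; the bound $u\geq2q^{2s}$ also holds when $q=2$ and $n\geq4s+2$, i.e. in the case $q=2$, $s>2$. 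In all of these cases $\Phi(2q^{2s})=4q^{3s}+4q^{2s}-2q^s-1>0$, hence $\Phi(u)>0$. There remain the two small cases $q=2$ with $s\in\{1,2\}$ and $n\geq4s+1$: there $u\geq2^{2s}\sqrt2$, and a direct computation gives $\Phi(4\sqrt2)=27-8\sqrt2>0$ for $s=1$ and $\Phi(16\sqrt2)=503-352\sqrt2>0$ for $s=2$. Once the inequality is established, $N_{q^n}>2q^s+2$ furnishes the good place $P$, and feeding the corresponding parameters into Theorem~\ref{th:main} produces $a\in\mathbb{F}_{q^{2n}}^*$ with $\dim_{\mathbb{F}_q}\ker(f_{a,b,s})=2$, where $b=\delta a$, which is the assertion.
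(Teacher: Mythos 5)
Your proposal is correct and follows essentially the same route as the paper: reduce via Theorem~\ref{th:main} and the substitutions at the start of Section~\ref{sec:qeven} to finding a suitable $\mathbb{F}_{q^n}$-rational affine point of $\mathcal{C}$, then combine Theorem~\ref{th:C'} with the Hasse--Weil bound to exclude the at most $2q^s+2$ bad places. The only difference is that you spell out the numerical inequality $q^n+1-2(q^{2s}-q^s-1)\sqrt{q^n}>2q^s+2$ case by case (including $q=2$, $s\in\{1,2\}$), which the paper asserts without detail; your verification is accurate.
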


\begin{proof}
By Theorems \ref{th:hasseweil} and \ref{th:C'}{\it{(a)}}, the number $N_{q^n}$ of $\mathbb{F}_{q^n}$-rational places of $\mathcal{C}$ satisfies
\[
N_{q^n}\geq q^n+1 - 2(q^{2s}-q^s-1)\sqrt{q^n} > 2q^s+2.
\]
By Theorem \ref{th:C'}{\it{(b)}}, there exists an $\mathbb{F}_{q^n}$-rational affine poin $(\bar{s},\bar{z})$ of $\mathcal{C}$ such that $\bar{t}=\bar{s}^2 (\bar{z}^2+\bar{z}+\epsilon)$ is different from zero. The claim follows.
\end{proof}

\section{Applications to linear sets and rank metric codes}\label{sec:appl}

\subsection{Linear sets}\label{sec:linearsets}

Let $\Lambda=\PG(V,\F_{q^m})=\PG(1,q^m)$, where $V$ is a vector space of dimension $2$ over $\F_{q^m}$.
A point set $L$ of $\Lambda$ is said to be an \emph{$\F_q$-linear set} of $\Lambda$ of rank $k$ if it is
defined by the non-zero vectors of a $k$-dimensional $\F_q$-vector subspace $U$ of $W$, i.e.
\[L=L_U=\{\la {\bf u} \ra_{\mathbb{F}_{q^m}} \colon {\bf u}\in U\setminus \{{\bf 0} \}\}.\]
We say that two linear sets $L_U$ and $L_W$ of $\Lambda=\PG(1,q^m)$ are $\mathrm{P}\Gamma \mathrm{L}$-\emph{equivalent} if there exists $\varphi \in \mathrm{P}\Gamma \mathrm{L} (2,q^m)$ such that $\varphi(L_U)=L_W$.

\smallskip

We start by pointing out that if the point $\langle (0,1) \rangle_{\F_{q^m}}$ is not contained in a linear set $L_U$ of rank $m$ of $\PG(1,q^m)$ (which we can always assume after a suitable projectivity), then $U=U_f=\{(x,f(x))\colon x\in \F_{q^m}\}$ for some $q$-polynomial $\displaystyle f(x)=\sum_{i=0}^{m-1}a_ix^{q^i}\in \tilde{\mathcal{L}}_{m,q}$. In this case we will denote the associated linear set by $L_f$.
Also, recall that the \emph{weight of a point} $P=\langle \mathbf{u} \rangle_{\F_{q^m}}$ is $w_{L_U}(P)=\dim_{\F_q}(U\cap\langle \mathbf{u} \rangle_{\F_{q^m}})$.

\smallskip

One of the most studied classes of linear sets of the projective line, especially because of its applications (see e.g. \cite{Polverino,Sheekey2016}), is the family of maximum scattered linear sets. A {\it maximum scattered} $\F_q$-linear set of $\PG(1,q^m)$ is an $\F_q$-linear set of rank $m$ of $\PG(1,q^m)$ of size $(q^m-1)/(q-1)$, or equivalently a linear set of rank $m$ in $\PG(1,q^m)$ all of whose points have weight one.
If $L_f$ is a maximum scattered linear set in $\PG(1,q^m)$, we also say that $f$ is a \emph{scattered polynomial}.
The known scattered polynomials of $\F_{q^m}$ are
\begin{enumerate}
  \item $f_1(x)=x^{q^s}\in \tilde{\mathcal{L}}_{m,q}$, with $\gcd(s,m)=1$, see \cite{BL2000};
  \item $f_2(x)= x^{q^s}+\alpha x^{q^{m-s}}\in\tilde{\mathcal{L}}_{m,q}$, with $m\geq 4$, $\gcd(s,m)=1$, $\N_{q^m/q}(\alpha) \notin\{0,1\}$, see \cite{LMPT2015,LP2001,Sheekey2016};
  \item $f_3(x)= x^{q^s}+\alpha x^{q^{s+\frac{m}2}}\in\tilde{\mathcal{L}}_{m,q}$, $m \in \{6,8\}$, $\gcd(s,\frac{m}2)=1$ and some conditions on $\alpha$, see \cite{CMPZ} and below;
  \item $f_4(x)=x^q+x^{q^3}+\alpha x^{q^5}\in \tilde{\mathcal{L}}_{6,q}$, $q$ odd and $\alpha^2+\alpha=1$, see \cite{CsMZ2018,MMZ};
  \item $f_5(x)=h^{q-1}x^q-h^{q^2-1}x^{q^2}+x^{q^4}+x^{q^5}\in \tilde{\mathcal{L}}_{6,q}$, $q$ odd, $h^{q^3+1}=-1$, see \cite{BZZ,ZZ}.
\end{enumerate}

\smallskip

In \cite{CMPZ}, the authors introduced the family of linear sets $L_{\delta,s}$ of rank $2n$ in $\PG(1,q^{2n})$ mentioned in 3., i.e. those linear sets defined by the $\F_q$-subspace
\begin{equation}\label{eq:Ud,s}
U_{\delta,s}=\{ (x,f_{\delta,s}(x)) \colon x \in \F_{q^{2n}}  \}\subset \F_{q^{2n}}\times \F_{q^{2n}},
\end{equation}
where
\[
f_{\delta,s}(x)=x^{q^s}+\delta x^{q^{n+s}}\in{\tilde \cL}_{2n,q},
\]
with $\mathrm{N}_{q^{2n/q^n}}(\delta) \notin\{0,1\}$, $1 \leq s \leq 2n-1$ and $\gcd(s,n)=1$.
The relevance of this family relies on the property that each point of $L_{\delta,s}$ has weight at most two; see \cite[Proposition 4.1]{CMPZ}. In \cite[Section 7]{CMPZ} the authors proved that for $n=3$ and $q>4$ there exists $\delta \in \F_{q^2}$ such that $L_{s,\delta}$ is scattered; for $n=4$, $q$ odd and $\delta^2=-1$ the linear set $L_{\delta,s}$ is scattered.
In \cite[Theorem 7.3]{PZ2019} the authors completely determined for $n=3$ necessary and sufficient conditions on $\delta$ ensuring $L_{\delta,s}$ to be scattered.
Note that for $n=3$ we may restrict to the case $s=2$, since every linear set $L_{\delta,s}$ is equivalent to $L_{\delta',2}$ for some $\delta'\in \F_{q^{2n}}^*$.
More precisely, if $\N_{q^6/q^3}(\delta)\notin\{0,1\}$ and we denote $A=-\frac{1}{\delta^{q^3+1}-1}$, one has that $L_{\delta,2}$ is scattered if and only if the equation
\begin{equation}\label{eq:eq2degree} Y^2-(\mathrm{Tr}_{q^3/q}(A)-1)Y+\N_{q^3/q}(A)=0
\end{equation}
admits two distinct roots in $\F_q$.

\begin{theorem}\label{th:noscatt}
Let $q$ be a prime power and $n,s$ be two relatively prime positive integers. Suppose that
\[
n\geq\begin{cases} 4s+2 & \textrm{if}\; q=3\textrm{ and }s>1,\,\textrm{or}\;q=2\textrm{ and }s>2; \\ 4s+1 & \textrm{otherwise}. \end{cases}
\]
Then, for every $\delta\in\mathbb{F}_{q^{2n}}^*$, the $\mathbb{F}_q$-linear set $L_{\delta,s}$ in $\PG(1,q^{2n})$ is not scattered.
\end{theorem}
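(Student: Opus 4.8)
The plan is to derive Theorem~\ref{th:noscatt} directly from Theorem~\ref{th:mainmain}, after translating the scattered property of $L_{\delta,s}$ into a condition on the kernels of the maps $f_{\delta,s}(x)-\lambda x$. Recall that the point $\langle(0,1)\rangle_{\F_{q^{2n}}}$ does not lie on $L_{\delta,s}$, and that for $\lambda\in\F_{q^{2n}}$ one has $U_{\delta,s}\cap\langle(1,\lambda)\rangle_{\F_{q^{2n}}}\cong\ker\bigl(f_{\delta,s}(x)-\lambda x\bigr)$, so that the weight of $\langle(1,\lambda)\rangle_{\F_{q^{2n}}}$ in $L_{\delta,s}$ equals $\dim_{\F_q}\ker(f_{\delta,s}(x)-\lambda x)$. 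Hence $L_{\delta,s}$ is scattered if and only if $\dim_{\F_q}\ker(f_{\delta,s}(x)-\lambda x)\le1$ for all $\lambda\in\F_{q^{2n}}$, and it suffices to exhibit, for each $\delta\in\F_{q^{2n}}^*$, one value of $\lambda$ for which this dimension is at least $2$.

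I would distinguish two cases according to $\N_{q^{2n}/q^n}(\delta)$. When $\N_{q^{2n}/q^n}(\delta)=1$, take $\lambda=0$: writing $f_{\delta,s}(x)=\bigl(x+cx^{q^n}\bigr)^{q^s}$ with $c=\delta^{q^{2n-s}}$ gives $\ker f_{\delta,s}=\ker(x+cx^{q^n})$, and applying $x\mapsto x^{q^n}$ to the relation $x=-cx^{q^n}$ shows that every element $x$ of this kernel satisfies $x=\N_{q^{2n}/q^n}(c)\,x$; since $\N_{q^{2n}/q^n}(c)=1$ exactly when $\N_{q^{2n}/q^n}(\delta)=1$, in the present case the kernel equals $\{0\}\cup\{x\ne0:\ x^{q^n-1}=-c^{-1}\}$, which has $q^n$ elements because $-c^{-1}$ has norm $1$ over $\F_{q^n}$. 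Thus $\dim_{\F_q}\ker f_{\delta,s}=n$, and since the numerical hypothesis forces $n\ge4s+1\ge5$, the linear set $L_{\delta,s}$ is not scattered.

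When $\N_{q^{2n}/q^n}(\delta)\notin\{0,1\}$, the key observation is the identity $f_{a,b,s}(x)=x+a\bigl(x^{q^s}+\delta x^{q^{n+s}}\bigr)=x+a\,f_{\delta,s}(x)$, valid for $b=\delta a$, which yields $\ker f_{a,b,s}=\ker\bigl(f_{\delta,s}(x)+a^{-1}x\bigr)$ for every $a\in\F_{q^{2n}}^*$; equivalently, $\dim_{\F_q}\ker f_{a,b,s}$ is the weight of the point $\langle(1,-a^{-1})\rangle_{\F_{q^{2n}}}$ of $L_{\delta,s}$. Since the hypotheses on $n$ and $s$ are precisely those of Theorem~\ref{th:mainmain}, that theorem applied to this $\delta$ provides some $a\in\F_{q^{2n}}^*$ with $\dim_{\F_q}\ker f_{a,b,s}=2$; taking $\lambda=-a^{-1}$ then exhibits a point of $L_{\delta,s}$ of weight $2$, so $L_{\delta,s}$ is not scattered.

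I do not expect a genuine obstacle: the whole statement is essentially a corollary of Theorem~\ref{th:mainmain}. The only point needing care is that Theorem~\ref{th:mainmain} excludes $\N_{q^{2n}/q^n}(\delta)=1$, so this value must be dealt with separately by the elementary binomial-kernel computation above, for which it is important that the hypothesis always gives $n\ge5$.
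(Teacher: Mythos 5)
Your argument is correct and follows essentially the same route as the paper: translate point weights in $L_{\delta,s}$ into kernel dimensions of $f_{\delta,s}(x)-\lambda x$, handle $\N_{q^{2n}/q^n}(\delta)=1$ via the weight-$n$ point $\langle(1,0)\rangle_{\F_{q^{2n}}}$, and otherwise invoke Theorem~\ref{th:mainmain} to get a point $\langle(1,-a^{-1})\rangle_{\F_{q^{2n}}}$ of weight $2$. The only difference is that you spell out the kernel computation for the norm-one case (which the paper merely asserts), and note that weight $\geq 2$ already suffices there, so the bound $n\geq 5$ is not actually needed for that step.
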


\begin{proof}
For every $m\in\mathbb{F}_{q^{2n}}$, the weight of the point $\langle(1,m)\rangle_{\mathbb{F}_{q^{2n}}}$ in $L_{\delta,s}$ coincides with the dimension over $\mathbb{F}_q$ of the kernel of $f_{\delta,s}(x)-mx$.

If $\N_{q^{2n}/q^n}(\delta)=1$, then the point $\langle(1,0)\rangle_{\mathbb{F}_{q^{2n}}}$ has weight $n$ in $L_{\delta,s}$.
Let $\N_{q^{2n}/q^n}(\delta)\ne1$. By Theorem \ref{th:mainmain}, there exists $a\in\F_{q^{2n}}^*$ such that $\dim_{\F_q}\ker(f_{a,\delta a,s}(x))=2$, whence
\[
\dim_{\F_q}\ker\left(a\left(f_{\delta,s}(x)+\frac{1}{a}x\right)\right)=2.
\]
This implies that the point $\langle\left(1,-\frac{1}{a}\right)\rangle_{\F_{q^{2n}}}$ has weight $2$ in $L_{\delta,s}$.
The claim is proved.
\end{proof}

Hence, we have the following description for the linear set $L_{\delta,s}$.

\begin{corollary}\label{cor:classbin}
Let $q$ be a prime power and $n,s$ be two relatively prime positive integers.
\begin{itemize}
    \item If $n=3$, then $L_{\delta,s}$ is a scattered linear set if and only if Equation \ref{eq:eq2degree} admits two distinct roots in $\F_q$.
    \item If $n=4$, $q$ is odd and $\delta^2=-1$ then $L_{\delta,s}$ is scattered.
    \item If \[
n\geq\begin{cases} 4s+2 & \textrm{if}\; q=3\textrm{ and }s>1,\,\textrm{or}\;q=2\textrm{ and }s>2, \\ 4s+1 & \textrm{otherwise}, \end{cases}
\]
then, for every $\delta\in\mathbb{F}_{q^{2n}}^*$, $L_{\delta,s}$ is not scattered.
\end{itemize}
\end{corollary}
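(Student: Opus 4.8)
The statement is an assembly corollary: its three bullets follow respectively from a known classification for $n=3$, a known sufficient condition for $n=4$, and Theorem \ref{th:noscatt} for large $n$. So the plan is simply to collect these three inputs and to check that their hypothesis ranges do not conflict, rather than to prove anything new.

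For the third bullet I would invoke Theorem \ref{th:noscatt} verbatim: under exactly the stated bound on $n$ it asserts that $L_{\delta,s}$ is not scattered for every $\delta\in\F_{q^{2n}}^*$, so nothing more is required. For the first bullet ($n=3$) I would first use the fact recalled above that every $L_{\delta,s}$ with $n=3$ is $\mathrm{P}\Gamma\mathrm{L}$-equivalent to some $L_{\delta',2}$ with $\delta'\in\F_{q^{6}}^*$; since being scattered is a $\mathrm{P}\Gamma\mathrm{L}$-invariant and the equivalence used in Remark \ref{rk:normdelta} multiplies $\delta$ by an element of norm $1$ over $\F_{q^3}$, the condition $\N_{q^6/q^3}(\delta)\notin\{0,1\}$ is preserved, so we may assume $s=2$. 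Then \cite[Theorem 7.3]{PZ2019}, as stated before the corollary, gives that $L_{\delta,2}$ is scattered if and only if the quadratic \eqref{eq:eq2degree} in $Y$ — with coefficients built from $A=-1/(\delta^{q^3+1}-1)$ — has two distinct roots in $\F_q$, which is the asserted equivalence. For the second bullet ($n=4$, $q$ odd, $\delta^2=-1$) I would quote \cite[Section 7]{CMPZ}, where $L_{\delta,s}$ is shown to be scattered under precisely these assumptions.

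The only point needing a line of verification — and it is not a genuine obstacle — is that the three cases are mutually compatible. If $s=1$ the large-$n$ bound of the third bullet reads $n\geq 5$, so the values $n=3,4$ of the first two bullets lie outside its scope; if $s\geq 2$ the bound is $n\geq 4s+1\geq 9$, again excluding $n\in\{3,4\}$. Hence no two bullets can make contradictory claims, and the corollary is just the disjoint juxtaposition of the three statements. (One may also note in passing that for $n=3$ the equivalence is implicitly read over $\delta$ with $\N_{q^6/q^3}(\delta)\notin\{0,1\}$, matching the domain of definition of $A$; the excluded norm values $0$ and $1$ give respectively a scattered monomial and, by the argument in the proof of Theorem \ref{th:noscatt}, a point of weight $n$.) All the real content is carried by Theorem \ref{th:mainmain}, equivalently Theorem \ref{th:noscatt}, already established, together with the cited results for $n\in\{3,4\}$; the corollary itself requires no further computation.
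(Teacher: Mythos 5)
Your proposal is correct and follows essentially the same route as the paper: the corollary is proved there in one line by citing \cite[Theorem 7.3]{PZ2019} for $n=3$, \cite[Theorem 7.2]{CMPZ} for $n=4$, and Theorem \ref{th:noscatt} for the third bullet. Your additional remarks on the reduction to $s=2$ and the compatibility of the hypothesis ranges are fine but not needed beyond what the paper already records.
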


\begin{proof}
The claim follows from \cite[Theorem 7.3]{PZ2019}, \cite[Theorem 7.2]{CMPZ}, and Theorem \ref{th:noscatt}.
\end{proof}

Among the known scattered polynomials listed above, the families in 3., 4. and 5. provide scattered polynomials for infinitely many $q$'s, but only over a specific extension of $\F_q$, namely either $\F_{q^6}$ or $\F_{q^8}$.
Unlike this situation, the families in 1.\ and 2.\ provide scattered polynomials over infinitely many extensions $\F_{q^m}$ of $\F_{q}$; they are named respectively as scattered polynomials of pseudoregulus type, and as scattered polynomials of LP type (after Lunardon and Polverino).

The scattered polynomials of pseudoregulus or LP type have raised the following question: which polynomials over $\mathbb{F}_{q^m}$ are scattered over infinitely many extensions of $\mathbb{F}_{q^m}$?

\begin{definition}{\rm \cite[Section 1]{BZ}}
Let $f(x)\in\tilde{\cL}_{m,q}$, $0\leq t\leq m-1$, $\ell\geq1$, and $U_{\ell}=\{(x^{q^t},f(x))\colon x\in\F_{q^{m\ell}}\}$.
We say that $f(x)$ is an exceptional scattered polynomial of index $t$ if $L_{U_{\ell}}$ is a scattered $\F_q$-linear set in $\PG(1,q^{m\ell})$ for infinitely many $\ell$'s.
\end{definition}

Clearly, the scattered polynomials of pseudoregulus type are exceptional scattered of index $0$.
Also, for the scattered polynomial $f_2(x)$ of LP type,
\[
U_{f_2}=\{(x^{q^s}, x^{q^{2s}}+\alpha x)\colon x\in\F_{q^m}\};
\]
thus, the polynomial $x^{q^{2s}}+\alpha x$ is exceptional scattered of index $s$.

For a scattered polynomial $f(x)\in\tilde{\cL}_{m,q}$ of index $t$, we say that $f(x)$ is $t$-normalized if the following properties hold: $f(x)$ is monic; the coefficient of $x^{q^t}$ in $f(x)$ is zero; if $t>0$, the coefficient of $x$ in $f(x)$ is nonzero.
Up to ${\rm PGL}$-equivalence of the corresponding scattered linear set, we may always assume that $f(x)$ is $t$-normalized.
\begin{theorem}
Let $f(x)\in\tilde{\cL}_{m,q}$ be a $t$-normalized exceptional scattered polynomial of index $t$. Then the following holds.
\begin{itemize}
    \item If $t=0$, then $f(x)$ is of pseudoregulus type; see \cite[Corollary 3.4]{BZ} for $q>5$, \cite[Section 4]{BM} for $q\leq5$.
    \item If $t=1$ or $t=2$, then $f(x)$ is either of pseudoregulus type or of LP type; see \cite[Corollary 3.7]{BZ} for $t=1$, \cite[Corollary 1.4]{BM} for $t=2$.
    \item If $t\geq3$, $q$ is odd, and $\max\{\deg_q f(x),t\}$ is an odd prime, then $f(x)=x$; see \cite[Theorem 1.2]{FM}.
\end{itemize}
\end{theorem}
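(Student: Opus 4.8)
Each of the three statements is established in the work cited alongside it: the case $t=0$ in \cite[Corollary 3.4]{BZ} for $q>5$ and in \cite[Section 4]{BM} for $q\leq5$; the cases $t=1$ and $t=2$ in \cite[Corollary 3.7]{BZ} and \cite[Corollary 1.4]{BM} respectively; and the case $t\geq3$ in \cite[Theorem 1.2]{FM}. We only outline the common underlying strategy, which is close in spirit to the proof of Theorem \ref{th:mainmain}.

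Let $f(x)\in\tilde{\cL}_{m,q}$ be $t$-normalized of index $t$, and set $d=\deg_q f(x)$. For every $\ell\geq1$ the linear set $L_{U_\ell}$ in $\PG(1,q^{m\ell})$ fails to be scattered precisely when the $\F_q$-bilinear polynomial
\[
\Phi(X,Y)=X^{q^t}f(Y)-Y^{q^t}f(X)
\]
has an $\F_{q^{m\ell}}$-rational zero $(x,y)$ with $y\notin\F_q x$. Since $\Phi$ is $\F_q$-bilinear it vanishes on the $q+1$ lines through the origin of $\PG(1,\overline{\F_q})$ that are rational over $\F_q$; factoring out these $q+1$ linear forms from $\Phi$ produces a plane curve $\cC_f$ whose degree is of the same order of magnitude as $\deg f(x)=q^d$, and whose $\F_{q^{m\ell}}$-rational points off the trivial locus are exactly the witnesses of non-scatteredness. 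The key reduction, carried out in \cite[Lemma 2.1]{BZ}, is that exceptional scatteredness of $f$ is equivalent to a restrictive condition on the absolutely irreducible components of $\cC_f$: a ``sufficiently generic'' component would, over a suitable extension $\F_{q^{m\ell}}$, carry by the Hasse--Weil bound (Theorem \ref{th:hasseweil}) an $\F_{q^{m\ell}}$-rational point off the trivial locus, and hence would destroy scatteredness for infinitely many $\ell$; here it is essential that $\deg\cC_f$ is small with respect to $q^{m\ell}$.

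It therefore remains to determine the $t$-normalized polynomials $f$ for which $\cC_f$ has this degenerate component structure, and this is a case-by-case analysis of the factorisation of $\Phi(X,Y)$ according to the index $t$ and the degree $d$. For $t=0$ one is left exactly with the monomials $x^{q^s}$, $\gcd(s,m)=1$; for $t\in\{1,2\}$ also the LP polynomials $x^{q^{2s}}+\alpha x$ survive; for $t\geq3$ with $q$ odd and $\max\{d,t\}$ an odd prime, the primality hypothesis rigidly constrains the possible factorisations of $\Phi$ and forces $d=1$, i.e.\ $f(x)=x$. The main obstacle in every case is precisely this component analysis, namely showing that a component forcing non-scatteredness over infinitely many extensions is unavoidable unless $f$ has one of the listed shapes.
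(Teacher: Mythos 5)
Your proposal takes essentially the same route as the paper: this theorem is a survey statement whose ``proof'' in the paper consists precisely of the citations to \cite[Corollary 3.4, Corollary 3.7]{BZ}, \cite[Section 4, Corollary 1.4]{BM} and \cite[Theorem 1.2]{FM} embedded in the statement, and you defer to exactly these results, with your outline of their common Hasse--Weil strategy on the curve obtained from $X^{q^t}f(Y)-Y^{q^t}f(X)$ being a faithful (and optional) summary. Two cosmetic slips in the outline: \cite[Lemma 2.1]{BZ} gives a sufficient condition for non-exceptionality (existence of a suitable absolutely irreducible component defined over the base field), not an equivalence, and in the last case the conclusion $f(x)=x$ means $\deg_q f(x)=0$, not $d=1$.
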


Recall that the polynomials $f_3(x)$ of family 3. in the list above are scattered under certain assumptions for $m\in\{6,8\}$; even when $f_3(x)$ is not scattered, still all the points of $L_{f_3}$ have weight at most $2$.
Thus, one may conjecture that family 3. contains scattered polynomials over $\F_{q^m}$ for every even $m$. Note that, even if this is the case, the arising scattered polynomials are not exceptional: not only the coefficients but also the degree depend heavily on the underlying field $\F_{q^m}$.

Our asymptotic result Theorem \ref{th:mainmain} shows that the family of scattered polynomial in 3. cannot be extended to any higher extension $\mathbb{F}_{q^m}$ when $m$ is large enough with respect to $s$.


\subsection{Rank metric codes}\label{sec:MRD}

Rank metric codes were introduced by Delsarte \cite{Delsarte} in 1978 and they have been intensively investigated in recent years because of their applications; we refer to \cite{sheekey_newest_preprint} for a recent survey on this topic.
The set of $m \times n$ matrices $\fq^{m\times n}$ over $\fq$ may be endowed with a metric, called \emph{rank metric}, defined by
\[d(A,B) = \mathrm{rk}\,(A-B).\]
A subset $\C \subseteq \fq^{m\times n}$ equipped with the rank metric is called a \emph{rank metric code} (shortly, a \emph{RM}-code).
The minimum distance of $\C$ is defined as
\[d = \min\{ d(A,B) \colon A,B \in \C,\,\, A\neq B \}.\]
Denote the parameters of a RM-code $\C\subseteq\fq^{m,n}$ with minimum distance $d$ by $(m,n,q;d)$.
We are interested in $\fq$-\emph{linear} RM-codes, i.e. $\fq$-subspaces of $\fq^{m\times n}$.
Delsarte showed in \cite{Delsarte} that the parameters of these codes must obey a Singleton-like bound, i.e.
\[ |\C| \leq q^{\max\{m,n\}(\min\{m,n\}-d+1)}. \]
When equality holds, we call $\C$ a \emph{maximum rank distance} (\emph{MRD} for short) code.
Examples of $\fq$-linear MRD-codes were first found in \cite{Delsarte,Gabidulin}.
We say that two $\fq$-linear RM-codes $\C$ and $\C'$ are equivalent if there exist $X \in \mathrm{GL}(m,q)$, $Y \in \mathrm{GL}(n,q)$, and $\sigma\in{\rm Aut}(\fq)$ such that
\[\C'=\{XC^\sigma Y \colon C \in \C\}.\]

The \emph{left} and \emph{right} idealisers of $\C$ are defined in \cite{LN2016} as $L(\C)=\{A \in \mathrm{GL}(m,q) \colon A \C\subseteq \C\}$ and $R(\C)=\{B \in \mathrm{GL}(n,q) \colon \C B \subseteq \C\}$. They are invariant under the equivalence of rank metric codes, and have been investigated in \cite{LTZ2}; further invariants have been introduced in \cite{GZ,NPH2}.

Much of the focus on MRD-codes of $\fq^{m\times m}$ to date has been on codes which are $\F_{q^m}$-\emph{linear}, i.e. codes in which the
left (or right) idealiser contains a field isomorphic to $\F_{q^m}$, since for such codes a fast decoding algorithm has been developed in \cite{Gabidulin}.
Very few examples of such codes are known, see \cite{BZZ,CMPZ,CsMPZh,CsMZ2018,Delsarte,Gabidulin,LP2001,MMZ,Sheekey2016,ZZ}.

In \cite[Section 5]{Sheekey2016} Sheekey showed that scattered $\F_q$-linear sets of $\PG(1,q^m)$ of rank $m$ yield $\F_q$-linear MRD-codes with parameters $(m,m,q;m-1)$ with left idealiser isomorphic to $\F_{q^m}$; see \cite{CsMPZ2019,CSMPZ2016,ShVdV} for further details on such kind of connections.
We briefly recall here the construction from \cite{Sheekey2016}. Let $U_f=\{(x,f(x))\colon x\in \F_{q^m}\}$, where $f(x)$ is a scattered $q$-polynomial.
The choice of an $\F_q$-basis for $\F_{q^m}$ defines a canonical ring isomorphism between $\mathrm{End}(\F_{q^m},\F_q)$ and $\F_q^{m\times m}$.
Thus, the set
\[
\C_f=\{x\mapsto af(x)+bx \colon a,b \in \F_{q^m}\}\subset \mathrm{End}(\F_{q^m},\F_q)
\]
corresponds to a set of $m\times m$ matrices over $\F_q$ forming an $\F_q$-linear MRD-code with parameters $(m,m,q;m-1)$. Also, as $\C_f$ is an $\F_{q^m}$-subspace of $\mathrm{End}(\F_{q^m},\F_q)$,
its left idealiser $L(\C_f)$ is isomorphic to $\F_{q^m}$; see also \cite[Section 6]{CMPZ}.

Now consider the set
\[ \C_{f_{\delta,s}}=\{ x\mapsto a(x^{q^s}+\delta x^{q^{s+n}})+bx \colon a,b \in \F_{q^{2n}} \}, \]
which corresponds to a set of $2n\times 2n$ matrices over $\F_q$ forming an $\F_q$-linear rank metric code with parameters $(2n,2n,q;2n-i)$, where
\[ i=\max\{ w_{L_{\delta,s}}(P) \colon P \in \PG(1,q^{2n}) \}. \]

The following theorem is a consequence of Corollary \ref{cor:classbin} and states that, when $n$ is large enough, $\C_{f_{\delta,s}}$ is not an MRD-code.

\begin{theorem}\label{th:applMRD}
Let $q$ be a prime power and $n,s$ be two relatively prime positive integers.
\begin{itemize}
    \item If $n=3$, then $\C_{f_{\delta,s}}$ is an MRD-code if and only if Equation {\rm \ref{eq:eq2degree}} admits two distinct roots in $\F_q$; see {\rm \cite{CMPZ}} and {\rm \cite{PZ2019}}.
    \item If $n=4$, $q$ odd and $\delta^2=-1$ then $\C_{f_{\delta,s}}$ is an MRD-code; see {\rm \cite{CMPZ}}.
    \item If \[
n\geq\begin{cases} 4s+2 & \textrm{if}\; q=3\textrm{ and }s>1,\,\textrm{or}\;q=2\textrm{ and }s>2 \\ 4s+1 & \textrm{otherwise}; \end{cases}
\]
then, for every $\delta\in\mathbb{F}_{q^{2n}}^*$, $\C_{f_{\delta,s}}$ is not an MRD-code and its minimum distance is $n-2$.
\end{itemize}
\end{theorem}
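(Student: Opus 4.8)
The plan is to deduce all three statements from the scatteredness classification of the linear sets $L_{\delta,s}$ established in Corollary \ref{cor:classbin}, via the standard correspondence between maximum scattered linear sets and MRD codes. Recall from \cite{Sheekey2016} (see also \cite[Section 6]{CMPZ} and the discussion preceding the statement) that $\C_{f_{\delta,s}}$ is an $\F_q$-linear rank metric code in $\fq^{2n\times 2n}$ of size $q^{4n}$ and minimum distance $2n-i$, where $i=\max\{w_{L_{\delta,s}}(P)\colon P\in\PG(1,q^{2n})\}$. Since the Singleton-like bound for a $(2n,2n,q;d)$ code reads $q^{2n(2n-d+1)}$, comparison with $|\C_{f_{\delta,s}}|=q^{4n}$ shows that
\[ \C_{f_{\delta,s}}\text{ is MRD}\iff d=2n-1\iff i=1\iff L_{\delta,s}\text{ is scattered}. \]
Thus each bullet of the theorem is the translation into the language of codes of the corresponding bullet of Corollary \ref{cor:classbin}.

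For $n=3$ I would first recall that every $L_{\delta,s}$ is $\mathrm{P}\Gamma\mathrm{L}$-equivalent to some $L_{\delta',2}$, so by the first item of Corollary \ref{cor:classbin} (equivalently, \cite[Theorem 7.3]{PZ2019}) the linear set is scattered exactly when \eqref{eq:eq2degree} has two distinct roots in $\fq$; by the equivalence displayed above this is precisely the condition for $\C_{f_{\delta,s}}$ to be MRD. For $n=4$, $q$ odd and $\delta^2=-1$, the second item of Corollary \ref{cor:classbin} (equivalently, \cite[Theorem 7.2]{CMPZ}) gives that $L_{\delta,s}$ is scattered, hence $\C_{f_{\delta,s}}$ is MRD.

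For the last and main bullet, suppose $n$ lies in the stated range. By Theorem \ref{th:noscatt} (equivalently, the third item of Corollary \ref{cor:classbin}), $L_{\delta,s}$ is not scattered for any $\delta\in\F_{q^{2n}}^*$, so $i\geq2$ and $\C_{f_{\delta,s}}$ is not MRD. To pin down the minimum distance exactly I would invoke \cite[Proposition 4.1]{CMPZ}, which states that every point of $L_{\delta,s}$ has weight at most $2$; hence $i=2$ and the minimum distance is $2n-2$. Equivalently, one may argue directly at the level of matrices: the maps $x\mapsto af_{\delta,s}(x)+bx$ with $a=0$, $b\ne0$ are invertible, while for $a\ne0$ the kernel of $af_{\delta,s}(x)+bx=a(f_{\delta,s}(x)+(b/a)x)$ has dimension $w_{L_{\delta,s}}(\langle(1,-b/a)\rangle_{\F_{q^{2n}}})\leq2$, with equality for a suitable choice of $b/a$ by Theorem \ref{th:mainmain}.

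I do not anticipate any genuine difficulty here: the substance of the result is Theorem \ref{th:mainmain} together with its even and odd refinements, which are already established, and the only work left is the bookkeeping of the parameters of $\C_{f_{\delta,s}}$ against the Singleton-like bound. The one point deserving care is to keep track of the code size $q^{4n}$ accurately, so as to certify both that $i=1$ is equivalent to the MRD property and that $i=2$ forces the minimum distance down to $2n-2$ rather than leaving an MRD code.
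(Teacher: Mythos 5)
Your proposal follows essentially the same route as the paper, which gives no separate argument for this theorem: it simply records that $\C_{f_{\delta,s}}$ has parameters $(2n,2n,q;2n-i)$ with $i=\max\{w_{L_{\delta,s}}(P)\colon P\in\PG(1,q^{2n})\}$ and reads the statement off from Corollary \ref{cor:classbin} (i.e. from \cite[Theorem 7.3]{PZ2019}, \cite[Theorem 7.2]{CMPZ} and Theorem \ref{th:noscatt}); your Singleton-bound bookkeeping and the chain ``MRD $\Leftrightarrow d=2n-1 \Leftrightarrow i=1 \Leftrightarrow L_{\delta,s}$ scattered'' is exactly that correspondence, taken from \cite{Sheekey2016}. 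Two points of reconciliation with the stated theorem. First, the minimum distance you obtain, $2n-2$, is the value dictated by the paper's own convention $d=2n-i$ with $i=2$; the ``$n-2$'' appearing in the statement is a slip, and your computation is the correct one. Second, your appeal to \cite[Proposition 4.1]{CMPZ} (all points of $L_{\delta,s}$ have weight at most $2$) is valid under the hypothesis $\N_{q^{2n}/q^n}(\delta)\notin\{0,1\}$ built into the definition of $L_{\delta,s}$; for $\N_{q^{2n}/q^n}(\delta)=1$ (which the phrase ``for every $\delta\in\F_{q^{2n}}^*$'' formally allows) the point $\langle(1,0)\rangle_{\F_{q^{2n}}}$ has weight $n$, as in the proof of Theorem \ref{th:noscatt}, so there $i=n$ and the minimum distance is $n$ rather than $2n-2$, the non-MRD conclusion remaining true. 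With these caveats made explicit, your argument is complete and coincides with the paper's intended proof.
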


\bigskip
\par\noindent Olga Polverino, Giovanni Zini and Ferdinando Zullo\\
Dipartimento di Matematica e Fisica,\\
Universit\`a degli Studi della Campania ``Luigi Vanvitelli'',\\
Viale Lincoln 5,\\
I--\,81100 Caserta, Italy\\
{{\em \{olga.polverino,giovanni.zini,ferdinando.zullo\}@unicampania.it}}

\end{document}